\documentclass{article}

\usepackage{booktabs, multirow, bm}
\usepackage{amsmath, amssymb, amsthm, mathrsfs}
\usepackage{indentfirst}
\usepackage{graphicx, tikz}
\usepackage{geometry}
\usepackage{caption}
\usepackage{subfigure, booktabs}

\usepackage{hyperref}

\usepackage{cleveref}
\crefname{section}{Section}{Sections}
\crefname{figure}{Figure}{Figures}
\crefname{table}{Table}{Tables}
\crefname{equation}{}{}
\crefname{theorem}{Theorem}{Theorems}
\crefname{lemma}{Lemma}{Lemmas}
\crefname{remark}{Remark}{Remarks}
\crefname{problem}{Problem}{Subproblems}

\newtheorem{theorem}{Theorem}[section]
\newtheorem{subproblem}{Subproblem}[section]

\newtheorem{lemma}{Lemma}[section]
\newtheorem{definition}{Definition}[section]
\theoremstyle{definition}
\newtheorem{example}{\noindent Example}

\graphicspath{{Figure/}}

%=============================================================

\begin{document}
	
	\title{Co-inversion of a scattering cavity and its internal sources: uniqueness, decoupling and imaging}
	
	\author{
		Deyue Zhang\thanks{School of Mathematics, Jilin University, Changchun, China, {\it dyzhang@jlu.edu.cn}},
		Yukun Guo\thanks{School of Mathematics, Harbin Institute of Technology, Harbin, China. {\it ykguo@hit.edu.cn} (Corresponding author)},
		Yinglin Wang\thanks{School of Mathematics, Jilin University, Changchun, China, {\it yinglin19@mails.jlu.edu.cn}}
		\ and Yan Chang\thanks{School of Mathematics, Harbin Institute of Technology, Harbin, China. {\it 21B312002@stu.hit.edu.cn}}
	}
	\date{}
	\maketitle
	
\begin{abstract}
		This paper is concerned with the simultaneous reconstruction of a sound-soft cavity and its excitation sources from the total-field data. Using the single-layer potential representations on two measurement curves, this co-inversion problem can be decoupled into two inverse problems: an inverse cavity scattering problem and an inverse source problem. This novel decoupling technique is fast and easy to implement since it is based on a linear system of integral equations. Then the uncoupled subproblems are respectively solved by the modified optimization method and sampling method. We also establish the uniqueness of this co-inversion problem and analyze the stability of our method. Several numerical examples are presented  to demonstrate the feasibility and effectiveness of the proposed method.
	\end{abstract}
	
	\noindent{\it Keywords}:  co-inversion problem, inverse cavity scattering, inverse source problem, optimization method, sampling method
	
	\maketitle	
	
\section{Introduction}
Over the past half-century, studies on inverse scattering problems have fueled tremendous success in the interdisciplinary applications such as noninvasive detection, medical diagnostics, radar sensing, and geophysical exploration. Recently, there is a rapid surge of interest in the so-called co-inversion problems in the inverse scattering community \cite{CG22, FDTZ20, Hu, Li1, Li2, LL17, LHY2}. The goal of co-inversion is to simultaneously reconstruct multiple unknowns of distinct nature. This distinctiveness of each target unknown inherently stems from its physical feature. For example, the impinging wave is usually regarded as an active source whereas the scatterer performs a passive reaction to the excitation, hence the underlying source and scatterer should be treated as substantially different components in the scattering system. The main concern of this paper is to recover such source-scatterer pair for an interior scattering system from the internal measurements of total wave fields.

Clearly, the aforementioned co-inversion problem is closely related to the single-inversion problems of inverse scattering and inverse source problems, which have been extensively studied. As a typical example in the inverse scattering situations, the interior inverse scattering problem aims at recovering the shape of the closed cavities by interior emitters and sensors. Numerical methods to reconstruct the cavities include the linear sampling method \cite{CCM14, HuY}, the integral equation method \cite{Qin}, the factorization method \cite{LXD, MHC14}, the decomposition method \cite{Zeng} and the reciprocity gap functional method \cite{SGM16}. In particular, the co-inversion of determining  both the shape of cavity and the surface impedance is considered in \cite{QC12a}.  Theoretically, the uniqueness of the inverse cavity scattering with full data (both the intensity and phase) was studied in \cite{HuY, LXD} and a uniqueness result with phaseless data was established in \cite{Guo2} by the reference ball technique. Another active research area in the inverse scattering community is the inverse source problems for recovering various sources.  In  \cite{Bao1, BLRX15}, the authors investigated the multi-frequency inverse source problem and addressed the uniqueness as well as the stability estimates. Numerically, the Fourier methods were proposed to recover the unknown source from multi-frequency data in \cite{WGZL17, ZG15}. In \cite{BGWL20, ZGLL19}, the direct sampling methods have been developed to reconstruct the source points from the near and far field measurements, respectively. In addition, there are many other relevant works such as the source identification using multiple frequency information in \cite{Eller}, the increasing stability for inverse source scattering problem analyzed in \cite{LY}. In fact, massive more  investigations can be found in the fertile literature on the inverse scattering and inverse source problems.

Compared with the traditional single-inversion problems of determining either the source or scatterer solely, the present co-inversion problem is obviously more challenging since the unknown quantities are twofold. Hence, besides the usual difficulties of nonlinearity and ill-posedness, the co-inversion problem also suffers from a severe lack of information. Therefore, an extra data supplementation is crucial and indispensable for surmounting the data insufficiency in the underdetermined co-inversion problem. In addition, the scattered waves are forced to be repeatedly bounced back from the boundary of the impenetrable cavity, which is another obstruction of the interior problem.

%The existing studies on the co-inversion problems are mainly focused on the theoretical aspects
%In contrast to the classical inverse scattering problems or the inverse source problems, researches concerning the co-inversion of the scatterer-source pair is relatively rare. Theoretically, Hu et. al \cite{Hu} addressed the unique determination of both obstacles and the source terms in an inhomogeneous medium. Liu et. al \cite{LHY2} gave the sufficient conditions to recover both the sound speed of the medium and source in thermo- and photo-acoustic tomography. Fang et. al \cite{FDTZ20} discussed the inverse problem of recovering an embedded obstacle and its surrounding medium by active measurements. Recently, Chang and Guo \cite{CG22} proposed a sampling-optimization approach to simultaneously reconstruct a sound-soft obstacle and its excitation point sources from near-field measurements. 

In this article, we propose a decoupling-imaging scheme to recover the cavity-source pair using the total field data. To our best knowledge, this is the first attempt in the literature to simultaneously determine an impenetrable cavity and its internal sources from the measured total field data. We would like to draw the reader's attention to the following contributions in this work. First, we introduce a new and practical model setting consisting of two measurement curves. By measuring  the total data on the twinned curves, the source and scattering components can be easily decoupled by the layer potential technique. Second, the inverse source and inverse cavity subproblems can be solved separately using tailor-made approaches whilst the algorithm does not require any alternating updates between the source and cavity. Third, the inversion is easy to implement with low computational cost since there is no need for solution process of the forward problem. Finally, the applicability and effectiveness of this method is both theoretically justified and numerically tested.

The rest of this paper is organized as follows: In the next section, we introduce the mathematical formulation of the co-inversion problem and present a uniqueness result on determining both the source points and the cavity. In \cref{sec:Decoupling}, we adopt the single-layer potential theory to decouple the co-inversion problem into an inverse cavity scattering problem and an inverse source problem, and establish the stability of the decoupling. In \cref{sec:algorithms}, two imaging algorithms are developed to reconstruct the source as well as the cavity. Stability of the numerical methods is analyzed as well. Then, numerical experiments are provided in \cref{sec:numerical experiments} to verify the performance of the proposed method. Finally, some concluding remarks are given in \cref{sec:conclusion}.
	
%%%%%%%%%%%%%%%%%%%%%%%%%%%%%%%%%%%%%%%%%%%%%%%%%%%%%%%%%%%%%%%%%%%%%%%
\section{Problem setting and uniqueness}\label{sec:Problem setting}
%%%%%%%%%%%%%%%%%%%%%%%%%%%%%%%%%%%%%%%%%%%%%%%%%%%%%%%%%%%%%%%%%%%%%%%%
	
Let us now introduce the mathematical model of the forward and inverse problem. In this paper we restrict ourselves to the two dimensional case and remark that the extension to the three dimensional case follows analogously. Let $D \subset\mathbb{R}^2$ be an open and simply connected domain with $C^2$ boundary $\partial D$. For a generic point $z\in D$, the incident field $u^i$ due to the point source located at $z$ is given by
\begin{equation}\label{pointsource}
		u^i (x; z)=\Phi(x,z):=\frac{\mathrm{i}}{4}H_0^{(1)}(k|x-z|), \quad x\in  D\backslash\{z\},
\end{equation}
where $H_0^{(1)}$ is the Hankel function of the first kind of order zero, and $k>0$ is the wavenumber.
	Then, the interior scattering problem for cavities can be formulated as: to find the scattered field $u^s(x;z)$ which satisfies the following boundary value problem:
	\begin{align}
		\Delta u^s+ k^2 u^s & = 0\quad \mathrm{in}\ D,\label{eq:Helmholtz} \\
		u &  = 0 \quad \mathrm{on}\ \partial D,\label{eq:boundary_condition}
	\end{align}
	where $u(x; z)=u^i(x; z)+u^s(x; z)$ denotes the total field. Assume that $k^2$ is not a Dirichlet eigenvalue for the negative Laplacian in $D$. The existence of a solution to the direct scattering problem  \eqref{eq:Helmholtz}-\eqref{eq:boundary_condition} is well known (see, e.g. \cite{Cakoni1}).
	
	In this paper, we take $B_1:=\{x\in  \mathbb{R}^2:|x|<R_1\}\subset D$ and assume that $k^2$ is not a Dirichlet eigenvalue for the negative Laplacian in $B_1$. Let $S=\cup_{j=1}^{N}\{z_j\}\subset B_1$ be a set of distinct source points with $N$ the number of source points.  Denote by $u^s(x; z_j)$ the scattered field corresponding to the incident field $u^i(x; z_j)$. Take two smooth measurement curves $\Gamma_1=\partial \Omega_1$ and $\Gamma_2=\partial \Omega_2$ such that $B_1\subset\Omega_1\subset\Omega_2\subset D$ and $k^2$ is not a Dirichlet eigenvalue for the negative Laplacian in $\Omega_2\backslash \Omega_1$. Collect the total field $u(x;z_j)=u^i(x;z_j)+u^s(x;z_j)$ on the curves $\Gamma_1$ and $\Gamma_2$. Then, the co-inversion problem under consideration is to determine the cavity-source pair $(\partial D, S)$ from the measurements $\mathbb{U}:=\{u(x; z): x\in \Gamma_1\cup\Gamma_2, z\in S\}$, namely,
	\begin{equation}\label{co-inversion}
		\mathbb{U}\rightarrow (\partial D, S).
	\end{equation}
	
	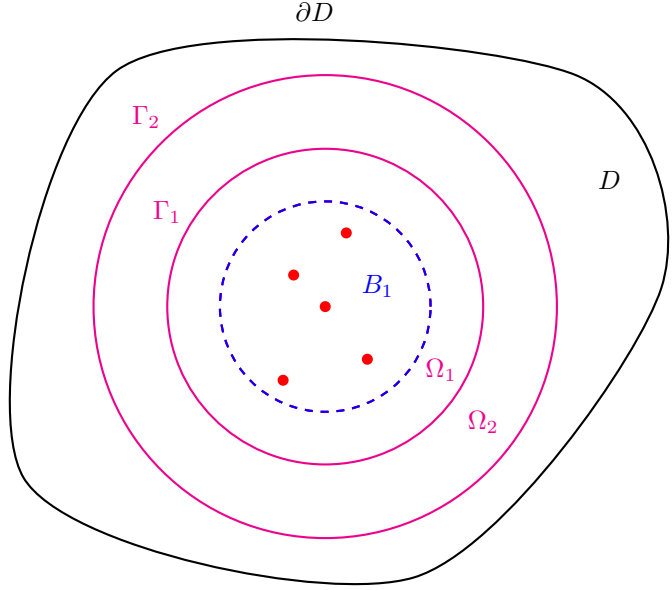
\begin{figure}
		\centering
		\begin{tikzpicture}[scale=1.4, thick]
			\pgfmathsetseed{8}
			\draw plot [smooth cycle, samples=5, domain={1:8}]
			(\x*360/8+5*rnd:2.5cm+1.5cm*rnd) node at (2.8,1.2) {$D$};
			\draw node at (0, 2.8) {$\partial D$};
			
			\draw[magenta] (0.1,0) circle (1.5cm) node at (-1.4,0.9){$ \Gamma_1 $};
			\draw[dashed] [magenta] (0.1,0) circle (1cm) node at (1.2,-0.6) {$ \Omega_1 $};
			\draw[magenta] (0.1,0) circle (2.2cm) node at (-1.6,1.8) {$ \Gamma_2 $};
			\draw[dashed] [magenta] (0.1,0) circle (1cm) node at (1.6,-1.1) {$\Omega_2 $};
			
			\filldraw [red] (-0.2,0.3) circle (1.2pt);
			\filldraw [red] (0.1,0) circle (1.2pt);
			\filldraw [red] (0.5,-0.5) circle (1.2pt);
			\filldraw [red] (0.3,0.7) circle (1.2pt);
			\filldraw [red] (-0.3,-0.7) circle (1.2pt);
			\draw[dashed] [blue] (0.1,0) circle (1cm) node at (0.6, 0.2) {$ B_1$};
			
			%\draw[dashed] [blue] (0,0) circle (4cm) node at (0,-3.5) {$B_2$};
		\end{tikzpicture}
		\caption{Illustration of the co-inversion for imaging the cavity and source points.} \label{Illustration}
	\end{figure}
	
	We refer to Figure \ref{Illustration} for an illustration of the geometry setting of the co-inversion problem\eqref{co-inversion}, and present a uniqueness result.
	
\begin{theorem}\label{Thm2.1}%-------------------------------------------------------------------------
		The locations of the source points $S$ can be uniquely determined by the total fields $\mathbb{U}$. Further, let $B_R$ be a ball with radius $R$ and $D\subseteq B_R$, and
		\begin{equation*}
			N_0:=\sum_{t_{0\ell}<kR} 1 +  \sum_{ t_{n\ell}<kR,  n\ne 0} 2 ,
		\end{equation*}
		where $t_{n\ell}(\ell=0,1,\cdots;n=0,1,\cdots)$ are the positive zeros of the Bessel functions $J_n$. If $N\geq N_0+1$, then the boundary of the cavity $\partial D$ can also be uniquely determined by the total fields $\mathbb{U}$.
\end{theorem}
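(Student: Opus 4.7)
The plan is to split the proof along the two assertions. The first uses only unique continuation and the non-eigenvalue hypothesis on $\Omega_2\setminus\overline{\Omega_1}$; the second proceeds by contradiction and reduces $\partial D=\partial\tilde D$ to a finite-dimensional comparison in which $N_0$ enters via domain monotonicity for the Dirichlet eigenvalues of $B_R$.

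For the source locations, each measurement $u(\cdot;z_j)$ restricted to $\Gamma_1\cup\Gamma_2$ is, by the non-eigenvalue hypothesis on $\Omega_2\setminus\overline{\Omega_1}$, the Dirichlet trace of a uniquely determined Helmholtz solution on that annulus. Real-analyticity of Helmholtz solutions then continues this field to $D\setminus\{z_j\}$, and the decomposition $u=\Phi(\cdot,z_j)+u^s$ with $u^s$ regular in $D$ forces the continued function to have precisely one interior singularity, of the Hankel form $\tfrac{\mathrm i}{4}H_0^{(1)}(k|\cdot-z_j|)$. The position of this singularity is therefore recovered as $z_j$, so the entire set $S$ is read off from $\mathbb U$.

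For the cavity, I argue by contradiction: let $D,\tilde D$ be two admissible cavities producing the same data $\mathbb U$ with the common source set $S$, and assume $D\ne\tilde D$. Let $G$ be the connected component of $D\cap\tilde D$ containing $\Omega_2\setminus\overline{\Omega_1}$. Matching on $\Gamma_1\cup\Gamma_2$, combined with the annulus non-eigenvalue hypothesis and unique continuation for the Helmholtz equation, gives $u(\cdot;z_j)=\tilde u(\cdot;z_j)$ throughout $G$; taking boundary traces and using the Dirichlet conditions on $\partial D$ and $\partial\tilde D$ then yields $u(\cdot;z_j)=\tilde u(\cdot;z_j)=0$ on $\partial G$. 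After possibly exchanging the roles of $D$ and $\tilde D$, the set $D\setminus\overline G$ is nonempty; fix one of its connected components, call it $\Omega^*$. Its boundary lies in $\partial D\cup\partial G$, and on both pieces $u(\cdot;z_j)=0$; moreover $z_j\in B_1\subset G$, so $u(\cdot;z_j)$ is a regular Helmholtz solution in $\Omega^*$ with vanishing Dirichlet trace, hence an element (possibly zero) of the Dirichlet eigenspace of $-\Delta$ in $\Omega^*$ at eigenvalue $k^2$.

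The closing step is a dimension count. Because $\Omega^*\subsetneq B_R$, strict domain monotonicity gives $\lambda_{N_0+1}(\Omega^*)>\lambda_{N_0+1}(B_R)\ge k^2$, so this Dirichlet eigenspace has dimension at most $N_0$. With $N\ge N_0+1$, the restrictions $u(\cdot;z_j)|_{\Omega^*}$ are linearly dependent, so there exist nontrivial coefficients $c_j$ with $\sum_{j}c_j\,u(\cdot;z_j)\equiv 0$ on $\Omega^*$. Unique continuation in the connected open set $D\setminus S$ (a simply connected planar domain with finitely many points removed) upgrades this to $\sum_j c_j\,u(\cdot;z_j)\equiv 0$ on $D\setminus S$, but the left-hand side retains the Hankel singularity $c_j\Phi(\cdot,z_j)$ at each $z_j$, forcing $c_j=0$ for all $j$, which is the contradiction. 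The main obstacle is obtaining any a priori control on the Dirichlet spectrum of the unknown region $\Omega^*$; the role of the enveloping ball $B_R$ in the hypothesis is precisely to enable this control through domain monotonicity, and this is why the explicit Bessel-zero count $N_0$ appears as the threshold for $N$.
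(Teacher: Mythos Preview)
Your argument is correct and follows the paper's approach. For the source part you use the same ingredients (the non-eigenvalue hypothesis on $\Omega_2\setminus\overline{\Omega_1}$, analytic continuation, and the Hankel singularity); the paper is slightly more explicit in that it allows two different cavities $D_1,D_2$ alongside two candidate sources $w_1,w_2$ and reaches the contradiction by letting $x\to w_2$ in the identity $u(\cdot;w_1)=u(\cdot;w_2)$, whereas your phrase ``continues this field to $D\setminus\{z_j\}$'' tacitly presumes a fixed cavity---to match the paper's generality, just note that the continuation from the annulus inward into $\Omega_1$ is determined by the data alone, independently of which admissible $D$ produced it.

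For the cavity part the paper simply invokes Theorem~2.1 of \cite{Qin}; you have supplied precisely that argument (the auxiliary region $\Omega^*\subset D\setminus\overline G$, the Dirichlet-eigenspace dimension bound via domain monotonicity against $B_R$, and the linear-independence contradiction coming from the distinct singularities of the $\Phi(\cdot,z_j)$). One minor technical point: rather than appealing to \emph{strict} domain monotonicity $\lambda_{N_0+1}(\Omega^*)>\lambda_{N_0+1}(B_R)$, it is cleaner to observe that $R$ may be enlarged slightly so that $kR$ is not a zero of any $J_n$ (the Bessel zeros are discrete, so $N_0$ is unchanged), whence already $\lambda_{N_0+1}(B_R)>k^2$ and the ordinary min--max inequality $\lambda_{N_0+1}(\Omega^*)\ge\lambda_{N_0+1}(B_R)$ yields the required bound on the $k^2$-eigenspace.
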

\begin{proof}
	Assume $D_1$ and $D_2$ are two sound-soft cavities such that $(\Gamma_1\cup\Gamma_2)\subset D_\ell\subset B_R, \ell=1,2$. Let $w_1, w_2\in B_R$ and denote by $u(x; w_1)$ and $u(x; w_2)$ the total fields generated by $D_1$, $u^i(x; w_1)$ and $D_2$, $u^i(x; w_2)$, respectively. Assume that
		\begin{equation*}
			u(x; w_1)=u(x; w_2),\quad x\in \Gamma_1\cup\Gamma_2.
		\end{equation*}
		We claim that $w_1=w_2$. Otherwise, if $w_1\ne w_2 $, let $v(x)=u(x; w_1)-u(x; w_2)$, then
		\begin{align*}
			\Delta v+k^2 v & =0,\quad \mathrm{in}\ \Omega_2\backslash\Omega_1, \\
			v              & =0,\quad \mathrm{on}\  \Gamma_1\cup\Gamma_2.
		\end{align*}
		Since $ k^2 $ is not a Dirichlet eigenvalue for the negative Laplacian in $\Omega_2\backslash\Omega_1$, we have $v=0$ in $\Omega_2\backslash\Omega_1$, that is $u(x; w_1)=u(x; w_2)$ in $\Omega_2\backslash\Omega_1$. Further, the analyticity leads to
		\begin{equation*}
			u(x; w_1)=u(x; w_2),\quad \mathrm{in}\ (D_1\cap D_2)\backslash\{w_1,w_2\}.
		\end{equation*}
		By letting $x\to w_2$ and using the boundedness of $u^s(x; w_\ell) (\ell=1,2)$, we have that $u(x; w_\ell)$ is bounded and $u(x; w_2)$ tends to infinity, which is a contradiction. Hence $w_1= w_2$. This implies that the data $\{u(x; z_j): x\in \Gamma_1\cup\Gamma_2 \}$ uniquely determine the location of the source point $z_j, j=1,\cdots, N$, i.e., $S$ can be uniquely determined by $\mathbb U$.
		
		Further, if $N\ge N_0+1$, from Theorem 2.1 in \cite{Qin}, we have that  $\partial D$ can be uniquely determined by $\mathbb{U}$, which completes the proof.
	\end{proof}
	
	%%%%%%%%%%%%%%%%%%%%%%%%%%%%%%%%%%%%%%%%%%%%%%%%%%%%%%%%%%%%%%%%%%%%%
\section{Decoupling the co-inversion problem}\label{sec:Decoupling}
	%%%%%%%%%%%%%%%%%%%%%%%%%%%%%%%%%%%%%%%%%%%%%%%%%%%%%%%%%%%%%%%%%%%%%
	
	%In this section, we will decouple this co-inversion problem into two inverse scattering problems: an inverse cavity scattering problem and an inverse source problem.
	
	Adopting the single-layer representation, we propose a method in this section to decouple the scattered field and incident field from the measured total field. Further, the co-inversion problem can be decoupled into two subproblems: an inverse cavity scattering problem and an inverse source problem. It deserves noting that, though the incident field and the scattered field are intrinsically coupled, we can decouple them by measuring the total fields on two distinct measurement curves.
	
	%%%%%%%%%%%%%%%%%%%%%%%%%%%%%%%%%%%%%%%%%%%%%%%%%%%%%%%%%%%%%%%%%%%%%
	\subsection{Decomposition of the total field}\label{sec:Decomposition}
	%%%%%%%%%%%%%%%%%%%%%%%%%%%%%%%%%%%%%%%%%%%%%%%%%%%%%%%%%%%%%%%%%%%%%
	
	We assume $D\subseteq B_2:=\{x\in  \mathbb{R}^2:|x|<R_2\}$ and approximate the incident field $u^i(x; z_j)$ and the scattered field $u^s(x; z_j)$ by the following single-layer potentials, respectively,
	\begin{align}
		& u^i(x; z_j)  \approx \int_{\partial B_1}\Phi(x,y)\varphi_{j,1}(y)\mathrm{d}s(y),
		\quad x\in \mathbb R^2\backslash\overline B_1, \label{singlelayer1} \\
		& u^s(x;z_j)  \approx \int_{\partial B_2}\Phi(x,y)\varphi_{j,2}(y)\mathrm{d}s(y),
		\quad x\in B_2, \label{singlelayer2}
	\end{align}
	where $\varphi_{j,1}\in L^2(\partial B_1),\ \varphi_{j,2}\in L^2(\partial B_2)$ are unknown density functions, $j=1, \cdots, N$.
	
	To determine the density functions, we introduce the single-layer operators $\mathcal{S}_{m\ell}:L^2(\partial B_m)\to L^2(\Gamma_\ell), m,\ell=1,2$,
	$$
	(\mathcal{S}_{m\ell}\psi_m)(x):=\int_{\partial B_m}\Phi(x, y)\psi_m(y)\mathrm{d}s(y),\quad x\in \Gamma_\ell,
	$$
	where $\psi_m\in L^2(\partial B_m)$, $m=1,2$.
	
	Now, by using the total fields $u_{j, \ell}=u(x; z_j), x\in\Gamma_\ell$, $\ell=1,2$, we derive the following equations for $\varphi_{j,1}$ and $\varphi_{j,2}$,
	\begin{align*}
		\mathcal{S}_{11}\varphi_{j,1}+\mathcal{S}_{21}\varphi_{j,2} & =u_{j,1},\quad \mathrm{on}\ \Gamma_1, \\
		\mathcal{S}_{12}\varphi_{j,1}+\mathcal{S}_{22}\varphi_{j,2} & =u_{j,2},\quad\mathrm{on}\ \Gamma_2,
	\end{align*}
	or equivalently,
	\begin{equation}\label{BIE}
		\mathcal{S}\boldsymbol{\varphi}_j=\boldsymbol{u}_j,
	\end{equation}
	where $\boldsymbol{\varphi}_j=(\varphi_{j,1},\varphi_{j,2})^\top $, $\boldsymbol{u}_j=(u_{j,1}, u_{j,2})^\top$, the single-layer operator $\mathcal{S}: L^2(\partial B_1)\times L^2(\partial B_2)\to L^2(\Gamma_1)\times L^2(\Gamma_2)$ is defined by
	\begin{equation}\label{operatorS}
		\mathcal{S} :=
		\begin{bmatrix}
			\mathcal{S}_{11} &  \mathcal{S}_{21} \\
			\mathcal{S}_{12} &  \mathcal{S}_{22}
		\end{bmatrix}.
	\end{equation}

	The following theorem indicates that the operator $\mathcal{S}$ is compact and the operator equation \eqref{BIE} is ill-posed and should be solved by a regularization method.
	
	\begin{theorem} \label{Theorem3.1}%---------------------------------------------------------------------------------------------------------
		The single-layer operator $\mathcal{S}$, defined by \eqref{operatorS}, is compact, injective and has dense range.
	\end{theorem}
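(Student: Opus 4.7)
The plan is to verify compactness, injectivity, and dense range in turn, leveraging the non-Dirichlet-eigenvalue hypotheses built into the geometric setup together with the standard jump and uniqueness properties of single-layer potentials.

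For compactness, by construction $\overline{B_1}\subset\Omega_1$ and $\overline{\Omega_2}\subset B_2$, so each pair $(\Gamma_\ell,\partial B_m)$ with $m,\ell\in\{1,2\}$ consists of disjoint closed $C^2$-curves. The kernel $\Phi(x,y)$ is therefore real-analytic on $\Gamma_\ell\times\partial B_m$, making every entry $\mathcal{S}_{m\ell}$ a Hilbert--Schmidt, hence compact, integral operator. Assembling the $2\times 2$ matrix preserves compactness, so $\mathcal{S}$ is compact.

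For injectivity, suppose $\mathcal{S}\boldsymbol{\varphi}=0$ and introduce the single-layer potentials $v_m(x):=\int_{\partial B_m}\Phi(x,y)\varphi_m(y)\,\mathrm{d}s(y)$, $m=1,2$, on all of $\mathbb{R}^2$. Each $v_m$ is continuous everywhere, satisfies Helmholtz off $\partial B_m$, and radiates at infinity; the hypothesis gives $v_1+v_2=0$ on $\Gamma_1\cup\Gamma_2$. I would pin $v_1+v_2$ to zero region by region: first inside the shell $\Omega_2\setminus\overline{\Omega_1}$ via the eigenvalue assumption on that domain; then, via real-analyticity in the connected annular component between $\partial B_1$ and $\partial B_2$ (across $\Gamma_1$ and $\Gamma_2$ both potentials are smooth), out to the full annulus; then in the exterior $\{|x|>R_2\}$ using Sommerfeld uniqueness for the radiating exterior Dirichlet problem on $\partial B_2$; and finally inside $B_1$ via the non-eigenvalue hypothesis there applied to the Helmholtz solution $v_1+v_2$, which vanishes on $\partial B_1$ by continuity from the annulus. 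Thus $v_1\equiv -v_2$ on $\mathbb{R}^2$. Because $v_2$ is smooth across $\partial B_1$ (the curves are disjoint), $v_1$ inherits $C^1$-regularity there, and the single-layer normal-derivative jump relation forces $\varphi_1=0$; the symmetric argument across $\partial B_2$ yields $\varphi_2=0$.

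For dense range, I would show $\mathcal{S}^{*}$ is injective by an almost identical argument, exploiting the symmetry $\Phi(x,y)=\Phi(y,x)$. Given $(f_1,f_2)\in\ker\mathcal{S}^{*}$, the function $F(y):=\int_{\Gamma_1}\Phi(y,x)\overline{f_1(x)}\,\mathrm{d}s(x)+\int_{\Gamma_2}\Phi(y,x)\overline{f_2(x)}\,\mathrm{d}s(x)$ is a single-layer potential supported on $\Gamma_1\cup\Gamma_2$ that vanishes on $\partial B_1\cup\partial B_2$. Rerunning the same chain of deductions (non-eigenvalue in $B_1$ and in $\Omega_2\setminus\overline{\Omega_1}$, Sommerfeld uniqueness outside $B_2$, analytic continuation in each connected component of $\mathbb{R}^2\setminus(\Gamma_1\cup\Gamma_2)$) forces $F\equiv 0$; the jumps of $\partial_\nu F$ across $\Gamma_1$ and $\Gamma_2$ then yield $f_1=f_2=0$. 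The main obstacle I anticipate is bookkeeping rather than genuine analytical difficulty: the plane is partitioned by four nested curves $\partial B_1,\Gamma_1,\Gamma_2,\partial B_2$ into several regions, and one must invoke the correct ingredient in the correct region and in the correct order to cover the entire plane. Once this geometric picture is drawn carefully, each individual step is routine.
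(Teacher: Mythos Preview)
Your proposal is correct and follows essentially the same route as the paper's proof: compactness from smooth kernels, injectivity by propagating $v_1+v_2=0$ through the nested regions using the non-eigenvalue hypotheses and exterior uniqueness and then reading off $\varphi_1=\varphi_2=0$ from the normal-derivative jump, and dense range by the dual argument on $\mathcal{S}^*$ with potentials supported on $\Gamma_1\cup\Gamma_2$. The only cosmetic difference is ordering: the paper concludes $\psi_1=0$ first (from $V_1+V_2=0$ in $B_2$), then reduces to $V_2$ alone before invoking exterior uniqueness, whereas you establish $v_1+v_2\equiv 0$ on all of $\mathbb{R}^2$ before applying the jump relations on both circles at once.
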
%----------------------------------------------------------------------------------------------------------------------------
	\begin{proof}
		The single-layer operator $\mathcal{S}$ is compact due to the compactness of the operators $\mathcal{S}_{ml}$, $m,l=1,2$.
		
		We next consider the injectivity of $\mathcal{S}$. Let $\mathcal{S}\boldsymbol{\psi}=\boldsymbol{0}$, where $\boldsymbol{\psi}=(\psi_1,\psi_2)^\top\in L^2(\partial B_1)\times L^2(\partial B_2)$.
		Then
		$$
		V_m(x):=\int_{\partial
			B_m}\Phi(x, y)\psi_m(y)\mathrm{d}s(y),\quad x\in\mathbb R^2\backslash \partial B_m,\  m=1,2,
		$$
		satisfy
		\begin{align*}
			\Delta(V_1+V_2)+k^2(V_1+V_2) & =0,\quad\mathrm{in}\ \Omega_2\backslash\Omega_1, \\
			V_1+V_2 & =0,\quad \mathrm{on}\  \Gamma_1\cup\Gamma_2.
		\end{align*}
		Since $k^2$ is not a Dirichlet eigenvalue for the negative Laplacian in $\Omega_2\backslash \Omega_1$, we have $V_1+V_2=0$ in $\Omega_2\backslash\Omega_1$. The analyticity of $V_m(m,\ell=1,2)$ leads to $V_1+V_2=0$ in $B_2\backslash B_1$.
		Further, we have
		\begin{align*}
			\Delta(V_1+V_2)+k^2(V_1+V_2) & =0,\quad \mathrm{in}\ B_1,\\
			V_1+V_2 & =0,\quad \mathrm{on}\  \partial B_1.
		\end{align*}
		By using the assumption that $k^2$ is not a Dirichlet eigenvalue for the negative Laplacian in $B_1$, we know $V_1+V_2=0$ in $B_1$, which, together with the jump relations of Theorem 3.1 in \cite{CK13}, yields $\psi_1=0$. Therefore, we have $V_2=0$ in $B_2$ and further the potential function $V_2$ satisfies
		\begin{align*}
			\Delta V_2+k^2V_2 & =0,\quad \mathrm{in}\ \mathbb R^2\backslash\overline B_2,\\
			V_2 & =0,\quad \mathrm{on}\  \partial B_2,
		\end{align*}
		and
		$$
		\lim_{r=|x|\to\infty}\sqrt r\left(\frac{\partial V_2}{\partial r}-\mathrm{i}kV_2\right)=0.
		$$
		And the uniqueness of solution of the exterior scattering problem implies that $V_2=0$ in $\mathbb R^2\backslash\overline B_2$. Again by using the
		jump relations of Theorem 3.1 in \cite{CK13}, we see $\psi_2=0$. Hence the operator $\mathcal{S}$ is injective.
		
		Let $ \mathcal{S}^*\boldsymbol{g}=\boldsymbol{0}$, where $ \boldsymbol{g}=(g_1,g_2)^\top\in L^2(\Gamma_1)\times L^2(\Gamma_2)$. Introduce the following potential functions	
		$$
		V_\ell^*(x):=\int_{\Gamma_l}\Phi(x, y)\overline{g_\ell(x)}\mathrm{d}s(x),\quad y\in\mathbb R^2\backslash \Gamma_\ell,\  \ell=1,2.
		$$
		Then, on one hand, it is readily to see that
		\begin{align*}
			\Delta(V_1^*+V_2^*)+k^2(V_1^*+V_2^*) & =0,\quad \mathrm{in}\ B_1,\\
			V_1^*+V_2^* & =0,\quad \mathrm{on}\  \partial B_1.
		\end{align*}
		By the assumption $k^2$ is not a Dirichlet eigenvalue for the negative Laplacian in $B_1$, we see $V_1^*+V_2^*=0$ in $B_1$. Then, from the analyticity of $V_m^*$, we know $V_1^*+V_2^*=0$ in $\Omega_1$.
		On the other hand, we see that
		\begin{align*}
			\Delta(V_1^*+V_2^*)+k^2(V_1^*+V_2^*) & =0,\quad \mathrm{in}\ \mathbb R^2\backslash\overline B_2,\\
			V_1^*+V_2^* & =0,\quad \mathrm{on}\  \partial B_2,\\
			\lim_{r=|x|\to\infty}\sqrt r\left(\frac{\partial V_2^*}{\partial r}-\mathrm{i}kV_2^*\right) & =0.
		\end{align*}
		By the uniqueness of solution of the exterior scattering problem, we have $V_1^*+V_2^*=0$ in $\mathbb R^2\backslash\overline B_2$. Then the analyticity of $V_m^*$ $(m,\ell=1,2)$ implies $V_1^*+V_2^*=0$ in $\mathbb R^2\backslash\overline \Omega_2$. Therefore, we derive that
		\begin{align*}
			\Delta(V_1^*+V_2^*)+k^2(V_1^*+V_2^*) & =0,\quad \mathrm{in}\ \Omega_2\backslash\Omega_1,\\
			V_1^*+V_2^* & =0,\quad \mathrm{on}\  \Gamma_1\cup\Gamma_2.
		\end{align*}
		Since $k^2$ is not a Dirichlet eigenvalue for the negative Laplacian in $\Omega_2\backslash\Omega_1$, we have
		$V_1^*+V_2^*=0$ in $\Omega_2\backslash\Omega_1$. Further, from the jump relations of Theorem 3.1 in \cite{CK13}, we obtain $g_1=0, g_2=0$. Hence the operator $\mathcal{S}^*$ is injective and by Theorem 4.6 in \cite{CK13} the range of $\mathcal{S}$ is dense in $L^2(\Gamma_1)\times L^2(\Gamma_2)$.
	\end{proof}
	
	Due to the ill-posedness, we need to consider the perturbed equation
	\begin{equation}\label{BIEP}
		\mathcal{S}\boldsymbol{\varphi}_j^\delta=\boldsymbol{u}_j^\delta,
	\end{equation}
	where $\boldsymbol{u}_j^\delta\in L^2(\Gamma_1)\times L^2(\Gamma_2)$ are measured noisy data satisfying $\| \boldsymbol{u}_j-\boldsymbol{u}_j^{\delta} \|_{L^2(\Gamma_1)\times L^2(\Gamma_2)} \le \delta$ with $0<\delta<1$.
	
	Seeking for a regularized solution to equation \eqref{BIEP} is to solve the following equation:
	\begin{equation}\label{Tikhonov}
		\alpha\bm{\varphi}^{\alpha,\delta}_j+\mathcal{S}^*\mathcal{S}\bm{\varphi}^{\alpha,\delta}_j=\mathcal{S}^*\bm{u}^\delta_j,
	\end{equation}
   	where the adjoint operator $\mathcal{S}^*: L^2(\Gamma_1)\times L^2(\Gamma_2)\to  L^2(\partial B_1)\times L^2(\partial B_2)$ is defined by
   \begin{equation*}
   	\mathcal{S^*} :=
   	\begin{bmatrix}
   		\mathcal{S}^*_{11} &  \mathcal{S}^*_{12} \\
   		\mathcal{S}^*_{21} &  \mathcal{S}^*_{22}
   	\end{bmatrix},
   \end{equation*}
   with $\mathcal{S}^*_{m\ell}: L^2(\Gamma_\ell)\to L^2(\partial B_m), m,\ell=1,2$, given by
    $$
    (\mathcal{S}^*_{m\ell}g_\ell)(y):=\int_{\Gamma_\ell}\overline{\Phi(x, y)}g_\ell(x)\mathrm{d}s(x),\quad
    y\in \partial B_m,\quad g_\ell\in L^2(\Gamma_\ell), \ell=1,2.
    $$

	The regularized solution to \eqref{BIEP} is the unique minimum of the Tikhonov functional
	\begin{equation*}
		J_{\alpha}(\boldsymbol{\varphi}_j):=\left\| \mathcal{S}\boldsymbol{\varphi}_j-\bm{u}^\delta_j \right\|^2_{L^2(\Gamma_1)\times L^2(\Gamma_2)}+\alpha\left\| \boldsymbol{\varphi}_j\right \|^2_{L^2(\partial B_1)\times L^2(\partial B_2)}.
	\end{equation*}
	In this paper, the regularization parameter $\alpha=\alpha(\delta)>0$ is chosen by the Morozov's discrepancy principle, and we obtain the following regularized approximation on the incident field and the scattered field,
	\begin{align}
		& u^i_{\alpha(\delta),\delta}(x; z_j)  = \int_{\partial B_1}\Phi(x,y)\varphi^{\alpha(\delta),\delta}_{j,1}(y)\mathrm{d}s(y),
		\quad x\in \mathbb R^2\backslash\overline B_1, \label{Tikhonov_ui}\\
		& u^s_{\alpha(\delta),\delta}(x; z_j)  = \int_{\partial B_2}\Phi(x, y)\varphi^{\alpha(\delta),\delta}_{j, 2}(y)\mathrm{d}s(y),
		\quad x\in B_2. \label{Tikhonov_us}
	\end{align}
	
	%%%%%%%%%%%%%%%%%%%%%%%%%%%%%%%%%%%%%%%%%%%%%%%%%%%%%%%%%%%%%%%%%%%%%
	\subsection{Stability of the decomposition}
	%%%%%%%%%%%%%%%%%%%%%%%%%%%%%%%%%%%%%%%%%%%%%%%%%%%%%%%%%%%%%%%%%%%%%
	In this subsection, we will give the error estimates of the decomposition. To this aim, we first introduce some
	single-layer operators $\mathcal{S}_m:L^2(\partial B_m)\to L^2(\Sigma_m)$, $ m=1,2$,
	$$
	(\mathcal{S}_m\psi_m)(x) =\int_{\partial B_m}\Phi(x,y)\psi_m(y)\mathrm{d}s(y),\quad x\in \Sigma_m,
	$$
	where $\Sigma_m=\partial \Omega_{\Sigma_m}$ and $B_1\subseteq \Omega_{\Sigma_m} \subseteq B_2$.
	
	Since $k^2$ is not a Dirichlet eigenvalue for the negative Laplacian in $B_1$, this following lemma is a direct result of Theorem 5.20 in \cite{CK13}.
	\begin{lemma}\label{lemma1}
		The single-layer operator $\mathcal{S}_1$ is injective and has dense range.
	\end{lemma}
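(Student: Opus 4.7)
The plan is to mirror, in the simpler single-density setting, the potential-theoretic bookkeeping already carried out in the proof of \cref{Theorem3.1}. The central observation is that $\mathcal{S}_1$ behaves like a classical single-layer operator, only with the target curve $\Sigma_m$ possibly distinct from the source curve $\partial B_1$; under the stated eigenvalue hypothesis this lemma is in fact the natural analogue in the present geometry of Theorem 5.20 in \cite{CK13}.

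For injectivity I would take $\psi\in L^2(\partial B_1)$ with $\mathcal{S}_1\psi=0$ and consider the single-layer potential
\[
V(x)=\int_{\partial B_1}\Phi(x,y)\psi(y)\,\mathrm{d}s(y),\quad x\in\mathbb{R}^2\setminus\partial B_1.
\]
Then $V$ solves the Helmholtz equation on $\mathbb{R}^2\setminus\partial B_1$ together with the Sommerfeld radiation condition, and by hypothesis vanishes on $\Sigma_m$. Real-analyticity on the exterior component $\mathbb{R}^2\setminus\overline{B_1}$ propagates this vanishing to the whole exterior of $\overline{B_1}$; in the degenerate case $\Sigma_m=\partial B_1$, exterior Dirichlet uniqueness gives the same conclusion. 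Continuity of the single-layer potential across $\partial B_1$ then yields $V=0$ on $\partial B_1$ from inside, and the hypothesis that $k^2$ is not a Dirichlet eigenvalue in $B_1$ forces $V\equiv 0$ in $B_1$. The normal-derivative jump relation of Theorem 3.1 in \cite{CK13} finally produces $\psi=0$.

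For the dense range I would use the duality $\overline{\mathrm{Range}(\mathcal{S}_1)}=\mathrm{Ker}(\mathcal{S}_1^*)^\perp$ (Theorem 4.6 in \cite{CK13}) and show that the adjoint $\mathcal{S}_1^*:L^2(\Sigma_m)\to L^2(\partial B_1)$ is injective. Given $\mathcal{S}_1^*g=0$, I would define
\[
W(y)=\int_{\Sigma_m}\overline{\Phi(x,y)}\,g(x)\,\mathrm{d}s(x),\quad y\in\mathbb{R}^2\setminus\Sigma_m,
\]
and note that $\overline{W}$ is an outgoing radiating solution of Helmholtz in $\mathbb{R}^2\setminus\Sigma_m$. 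The condition $\mathcal{S}_1^*g=0$ gives $W=0$ on $\partial B_1\subset\Omega_{\Sigma_m}$, and combined with analyticity in $\Omega_{\Sigma_m}$ and the eigenvalue hypothesis on $B_1$, this forces $W\equiv 0$ throughout $\Omega_{\Sigma_m}$, hence $W=0$ on $\Sigma_m$ from the interior. Exterior uniqueness applied to $\overline{W}$ then yields $W\equiv 0$ outside $\overline{\Omega_{\Sigma_m}}$, and the normal-derivative jump across $\Sigma_m$ forces $g=0$.

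The only step requiring genuine care is the adjoint side: one must consistently track that the conjugated kernel $\overline{\Phi}$ corresponds to an incoming rather than outgoing fundamental solution, so that the Sommerfeld-based exterior uniqueness theorem is applied to $\overline{W}$ and not to $W$ itself. Everything else is a routine chain of analytic continuation, the interior eigenvalue hypothesis, and the jump relations already invoked in the proof of \cref{Theorem3.1}.
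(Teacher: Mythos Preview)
Your argument is correct and is precisely the standard potential-theoretic proof that underlies Theorem~5.20 in \cite{CK13}. The paper itself does not give a detailed argument for this lemma at all: it simply observes that, since $k^2$ is not a Dirichlet eigenvalue for $-\Delta$ in $B_1$, the result is a direct consequence of Theorem~5.20 in \cite{CK13}. So you have supplied exactly the proof that the paper chose to outsource to that reference; there is no discrepancy in approach, only in level of detail.
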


   Following the proof of  Theorem \ref{Theorem3.1}, we readily derive the following result and the proof is omitted.
	\begin{lemma}\label{lemma2}
		The single-layer operator $\mathcal{S}_2$ is injective and has dense range provided that $k^2$ is not a Dirichlet eigenvalue for the negative Laplacian in $\Omega_{\Sigma_2}$.
	\end{lemma}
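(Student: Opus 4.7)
The plan is to mirror the argument for the single-layer operator $\mathcal{S}$ in \cref{Theorem3.1}, but with the geometry simplified by the fact that we now deal with a single source curve $\partial B_2$ and a single measurement curve $\Sigma_2$ sandwiched between $B_1$ and $B_2$. The two assertions (injectivity and dense range) will be handled separately, and the dense range statement will, as in \cref{Theorem3.1}, be reduced to injectivity of the $L^2$-adjoint followed by Theorem 4.6 in \cite{CK13}.

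For injectivity, I would start from $\mathcal{S}_2\psi_2=0$ and set
\[
V_2(x):=\int_{\partial B_2}\Phi(x,y)\psi_2(y)\,\mathrm{d}s(y),\quad x\in\mathbb R^2\setminus\partial B_2.
\]
Inside $B_2$ the function $V_2$ is a smooth solution of the Helmholtz equation, and by hypothesis it vanishes on $\Sigma_2=\partial\Omega_{\Sigma_2}$. Using the assumption that $k^2$ is not a Dirichlet eigenvalue of $-\Delta$ in $\Omega_{\Sigma_2}$ forces $V_2\equiv 0$ in $\Omega_{\Sigma_2}$, and then analyticity propagates this zero throughout $B_2$, in particular up to $\partial B_2$ from the inside. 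Continuity of the single-layer potential across $\partial B_2$ then yields zero Dirichlet data for $V_2$ on the exterior side, so $V_2$ solves an exterior Dirichlet problem with the Sommerfeld radiation condition; uniqueness of this exterior problem gives $V_2\equiv 0$ in $\mathbb R^2\setminus\overline{B_2}$. The jump relations (Theorem 3.1 in \cite{CK13}) finally produce $\psi_2=0$.

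For the dense range, I would show that $\mathcal{S}_2^*:L^2(\Sigma_2)\to L^2(\partial B_2)$ is injective. Given $g\in L^2(\Sigma_2)$ with $\mathcal{S}_2^*g=0$, introduce the conjugated single-layer potential
\[
V_2^*(y):=\int_{\Sigma_2}\Phi(x,y)\overline{g(x)}\,\mathrm{d}s(x),\quad y\in\mathbb R^2\setminus\Sigma_2,
\]
whose restriction to $\partial B_2$ is (up to conjugation) $\mathcal{S}_2^*g$, hence vanishes there. In the exterior component $\mathbb R^2\setminus\overline{B_2}$, $V_2^*$ satisfies the Helmholtz equation, vanishes on $\partial B_2$, and obeys the Sommerfeld radiation condition, so uniqueness of the exterior Dirichlet problem gives $V_2^*\equiv 0$ there; analyticity in $\mathbb R^2\setminus\overline{\Omega_{\Sigma_2}}$ then extends this zero up to $\Sigma_2$ from the outside. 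Continuity of the single-layer potential across $\Sigma_2$ supplies zero Dirichlet data from the inside, and the assumption that $k^2$ is not a Dirichlet eigenvalue of $-\Delta$ in $\Omega_{\Sigma_2}$ gives $V_2^*\equiv 0$ in $\Omega_{\Sigma_2}$. Applying the jump relations on $\Sigma_2$ yields $\overline{g}=0$, hence $g=0$.

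I do not expect a real obstacle here: the structural steps (interior non-eigenvalue argument, propagation by analyticity, exterior uniqueness via the Sommerfeld condition, jump relations) are exactly those already assembled for \cref{Theorem3.1}, and the present single-curve situation uses each of them only once rather than on two nested annular regions. The only point that requires a little care is the direction in which analyticity is invoked on each side of $\Sigma_2$ and $\partial B_2$, so that the Dirichlet data passed into the non-eigenvalue or exterior-uniqueness argument is genuinely the boundary trace of a solution of Helmholtz on the correct side; this is precisely what is spelled out above, and it also explains why the hypothesis is placed on $\Omega_{\Sigma_2}$ rather than on any other subdomain.
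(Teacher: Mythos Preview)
Your proposal is correct and follows precisely the route the paper intends: the paper itself omits the proof, stating only that it follows by repeating the argument of \cref{Theorem3.1}, and your write-up carries out exactly that adaptation to the single source curve $\partial B_2$ and measurement curve $\Sigma_2$. The use of the non-eigenvalue assumption on $\Omega_{\Sigma_2}$, propagation by analyticity, exterior uniqueness via the radiation condition, and the jump relations are invoked in the same order and for the same purposes as in \cref{Theorem3.1}.
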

	
	By using Lemma \ref{lemma1} and Lemma \ref{lemma2}, it is readily to see that for a sufficiently small positive constant $\varepsilon_0$  $(0<\varepsilon_0\ll 1)$, there exist density functions $\varphi_{j,1}^*\in L^2(\partial B_1)$ and $\varphi_{j,2}^*\in L^2(\partial B_2)$, such that
	\begin{align*}
		\left\|\mathcal{S}_1\varphi_{j,1}^*-u^i(\ \cdot\ ;z_j)\right\|_{L^2(\Gamma_1)} & <\varepsilon_0,\\
		\left\|\mathcal{S}_2\varphi_{j,2}^*-u^s(\ \cdot\ ;z_j)\right\|_{L^2(\partial D)} & <\varepsilon_0 .
	\end{align*}
	Denote $\bm{\varphi}_j^*=(\varphi_{j,1}^*,\varphi_{j,2}^*)^\top$, and from continuous dependence of solutions on the boundary value, we obtain
	\begin{equation}\label{BIEAppro}
		\mathcal{S}\bm{\varphi}_j^*=\bm{u}_j^*,
	\end{equation}
	where
	\begin{equation}\label{varepsilon}
		\left	\|\boldsymbol{u}_j^*-\boldsymbol{u}_j \right\|_{L^2(\Gamma_1)\times L^2(\Gamma_2)}<C_0\varepsilon_0=:\varepsilon,
	\end{equation}
    with $C_0>0$.
	
	In the following, we present the main result on the error estimates.
	
	\begin{theorem}\label{Theorem3.2}
		Let $\delta+\varepsilon \le \left\| \boldsymbol{u}_j^{\delta} \right\|_{L^2(\Gamma_1)\times L^2(\Gamma_2)}$ with $\varepsilon$ being defined in \eqref{varepsilon}, the regularized solution $\boldsymbol{\varphi}^{\alpha(\delta),\delta}_j $ of \eqref{Tikhonov} satisfy $\left\|\mathcal{S}\boldsymbol{\varphi}^{\alpha(\delta),\delta}_j-\boldsymbol{u}_j^{\delta}\right\|_{L^2(\Gamma_1)\times L^2(\Gamma_2)}=\delta+\varepsilon$, $\delta\in(0,\delta_0)$. Then
		
		\noindent(a) There exists a function $\boldsymbol{h}_j\in L^2(\Gamma_1)\times L^2(\Gamma_2)$ such that
		$$
		\left\| \boldsymbol{\varphi}^*_j-\mathcal{S}^*\boldsymbol{h}_j\right \|_{L^2(\partial B_1)\times L^2(\partial B_2)}<\varepsilon ;
		$$
		\noindent(b) Let $ \| \boldsymbol{h}_j \|_{L^2(\Gamma_1)\times L^2(\Gamma_2)} \le E_j $, then
		$$
		\left\| \boldsymbol{\varphi}^{\alpha(\delta),\delta}_j-\boldsymbol{\varphi}_j^* \right\|_{L^2(\partial B_1)\times L^2(\partial B_2)} \le 2\varepsilon+2\sqrt{(\delta+\varepsilon)E_j} .
		$$
	\end{theorem}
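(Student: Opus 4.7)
The plan is to treat Theorem \ref{Theorem3.2} as a standard Tikhonov-stability argument adapted to the perturbed source condition. Part (a) is essentially a density statement, whereas part (b) will combine the minimizing property of $\boldsymbol{\varphi}^{\alpha(\delta),\delta}_j$, Morozov's discrepancy equation, and the (approximate) source representation from part (a), ending with a quadratic inequality for the error.

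For part (a), I would exploit \cref{Theorem3.1}, which asserts that $\mathcal{S}$ is injective. By the standard functional-analytic identity $\overline{\mathrm{range}(\mathcal{S}^*)} = \ker(\mathcal{S})^{\perp}$, injectivity of $\mathcal{S}$ forces the range of $\mathcal{S}^*$ to be dense in $L^2(\partial B_1)\times L^2(\partial B_2)$. Therefore $\boldsymbol{\varphi}^*_j$ can be approximated in norm by an element of $\mathrm{range}(\mathcal{S}^*)$ up to any prescribed tolerance, in particular up to $\varepsilon$, which produces the required $\boldsymbol{h}_j$.

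For part (b), abbreviate $\boldsymbol{\varphi}_\alpha := \boldsymbol{\varphi}^{\alpha(\delta),\delta}_j$. First I would note that since $\mathcal{S}\boldsymbol{\varphi}^*_j = \boldsymbol{u}^*_j$ and $\|\boldsymbol{u}^*_j - \boldsymbol{u}^\delta_j\| \le \|\boldsymbol{u}^*_j - \boldsymbol{u}_j\| + \|\boldsymbol{u}_j - \boldsymbol{u}^\delta_j\| \le \varepsilon + \delta$, the function $\boldsymbol{\varphi}^*_j$ is an admissible competitor for the Tikhonov functional at discrepancy level $\delta+\varepsilon$. Combining the minimizer inequality
\[
\|\mathcal{S}\boldsymbol{\varphi}_\alpha - \boldsymbol{u}^\delta_j\|^2 + \alpha\|\boldsymbol{\varphi}_\alpha\|^2 \;\le\; \|\mathcal{S}\boldsymbol{\varphi}^*_j - \boldsymbol{u}^\delta_j\|^2 + \alpha\|\boldsymbol{\varphi}^*_j\|^2
\]
with the Morozov identity $\|\mathcal{S}\boldsymbol{\varphi}_\alpha - \boldsymbol{u}^\delta_j\| = \delta+\varepsilon$ yields the comparison $\|\boldsymbol{\varphi}_\alpha\| \le \|\boldsymbol{\varphi}^*_j\|$. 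Expanding $\|\boldsymbol{\varphi}_\alpha - \boldsymbol{\varphi}^*_j\|^2$ and absorbing the $\|\boldsymbol{\varphi}_\alpha\|^2$ term then gives the key estimate
\[
\|\boldsymbol{\varphi}_\alpha - \boldsymbol{\varphi}^*_j\|^2 \;\le\; 2\,\mathrm{Re}\,\langle \boldsymbol{\varphi}^*_j - \boldsymbol{\varphi}_\alpha,\, \boldsymbol{\varphi}^*_j\rangle.
\]

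The last step is to exploit part (a) by writing $\boldsymbol{\varphi}^*_j = \mathcal{S}^* \boldsymbol{h}_j + \boldsymbol{e}$ with $\|\boldsymbol{e}\| < \varepsilon$. Then
\[
\langle \boldsymbol{\varphi}^*_j - \boldsymbol{\varphi}_\alpha,\, \boldsymbol{\varphi}^*_j\rangle = \langle \mathcal{S}(\boldsymbol{\varphi}^*_j - \boldsymbol{\varphi}_\alpha),\, \boldsymbol{h}_j\rangle + \langle \boldsymbol{\varphi}^*_j - \boldsymbol{\varphi}_\alpha,\, \boldsymbol{e}\rangle.
\]
Triangle inequality together with Morozov gives $\|\mathcal{S}(\boldsymbol{\varphi}^*_j - \boldsymbol{\varphi}_\alpha)\| \le 2(\delta+\varepsilon)$, while Cauchy--Schwarz bounds the second inner product by $\varepsilon\,\|\boldsymbol{\varphi}^*_j - \boldsymbol{\varphi}_\alpha\|$. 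Setting $X := \|\boldsymbol{\varphi}_\alpha - \boldsymbol{\varphi}^*_j\|$ and $\|\boldsymbol{h}_j\|\le E_j$, I obtain the quadratic inequality $X^2 \le 4(\delta+\varepsilon)E_j + 2\varepsilon X$. Solving and applying $\sqrt{a+b}\le \sqrt{a}+\sqrt{b}$ delivers $X \le \varepsilon + \sqrt{\varepsilon^2 + 4(\delta+\varepsilon)E_j} \le 2\varepsilon + 2\sqrt{(\delta+\varepsilon)E_j}$, which is exactly the claimed bound.

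The only delicate point is properly tracking the two distinct small parameters $\delta$ and $\varepsilon$: the $\delta$ arising from measurement noise and the $\varepsilon$ arising both from the approximation of $u^i, u^s$ by layer potentials and from the density approximation in part (a). Once this bookkeeping is handled correctly, the remainder is routine; the final $\sqrt{a+b}\le\sqrt a+\sqrt b$ trick is what produces the clean additive form $2\varepsilon + 2\sqrt{(\delta+\varepsilon)E_j}$ rather than an implicit quadratic bound.
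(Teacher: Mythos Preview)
Your proposal is correct and follows essentially the same route as the paper's proof: density of $\mathrm{range}(\mathcal{S}^*)$ from injectivity of $\mathcal{S}$ for part~(a), and for part~(b) the Morozov--minimizer comparison to get $\|\boldsymbol{\varphi}_\alpha\|\le\|\boldsymbol{\varphi}_j^*\|$, followed by the expansion $\|\boldsymbol{\varphi}_\alpha-\boldsymbol{\varphi}_j^*\|^2\le 2\Re\langle\boldsymbol{\varphi}_j^*-\boldsymbol{\varphi}_\alpha,\boldsymbol{\varphi}_j^*\rangle$, the approximate source representation, and the resulting quadratic inequality. The only cosmetic difference is that you bound $\|\mathcal{S}(\boldsymbol{\varphi}_j^*-\boldsymbol{\varphi}_\alpha)\|\le 2(\delta+\varepsilon)$ directly by the triangle inequality, whereas the paper splits $\langle \boldsymbol{u}_j^*-\mathcal{S}\boldsymbol{\varphi}_\alpha,\boldsymbol{h}_j\rangle$ into $\langle \boldsymbol{u}_j^*-\boldsymbol{u}_j^\delta,\boldsymbol{h}_j\rangle+\langle \boldsymbol{u}_j^\delta-\mathcal{S}\boldsymbol{\varphi}_\alpha,\boldsymbol{h}_j\rangle$; both yield the same $4(\delta+\varepsilon)E_j$ term.
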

	\begin{proof}
		\noindent(a) The range of $ \mathcal{S}^*$ is dense in $L^2(\partial B_1)\times L^2(\partial B_2)$, since $\mathcal{S}: L^2(\partial B_1)\times L^2(\partial B_2)\to L^2(\Gamma_1)\times L^2(\Gamma_2)$ is compact and injective by Theorem \ref{Theorem3.1}. Therefore, for $\varepsilon$, there exists a function $ \boldsymbol{h}_j \in L^2(\Gamma_1)\times L^2(\Gamma_2)$ such that $\|\boldsymbol{\varphi}_j^*-\mathcal{S}^*\boldsymbol{h}_j \|_{L^2(\partial B_1)\times L^2(\partial B_2)}< \varepsilon$.
		
		\noindent(b) Let $\boldsymbol{\varphi}_j^{\delta}:= \boldsymbol{\varphi}^{\alpha(\delta),\delta}_j$ be the minimum of the Tikhonov functional
		\begin{equation*}
			J^{\delta}(\boldsymbol{\varphi}_j):=J_{\alpha(\delta),\delta}(\boldsymbol{\varphi}_j)=\left\| \mathcal{S}\boldsymbol{\varphi}_j-\boldsymbol{u}_j^{\delta}\right \|^2_{L^2(\Gamma_1)\times L^2(\Gamma_2)}
			+\alpha(\delta)\left\| \boldsymbol{\varphi}_j \right\|^2_{L^2(\partial B_1)\times L^2(\partial B_2)}.
		\end{equation*}
		Then, we have
		\begin{align*}
			\quad (\delta+\varepsilon)^2+\alpha(\delta)\left\| \boldsymbol{\varphi}_j^{\delta}\right \|^2_{L^2(\partial B_1)\times L^2(\partial B_2)}
			& =J^{\delta}(\boldsymbol{\varphi}_j^{\delta}) \le  J^{\delta}(\boldsymbol{\varphi}_j^*) \\
			& =\left\| \boldsymbol{u}_j^*-\boldsymbol{u}_j^{\delta} \right\|^2_{L^2(\Gamma_1)\times L^2(\Gamma_2)}
			+\alpha(\delta)\left\| \boldsymbol{\varphi}_j^* \right\|^2_{L^2(\partial B_1)\times L^2(\partial B_2)} \\
			&\le (\delta+\varepsilon)^2+\alpha(\delta)\left\| \boldsymbol{\varphi}_j^* \right\|^2_{L^2(\partial B_1)\times L^2(\partial B_2)},
		\end{align*}
		which implies $\left\|\boldsymbol{\varphi}_j^{\delta} \right\|_{L^2(\partial B_1)\times L^2(\partial B_2)} \le \left\| \boldsymbol{\varphi}_j^* \right\|_{L^2(\partial B_1)\times L^2(\partial B_2)}$ for all $\delta >0$. Hence
		\begin{align*}
			\quad \left\| \boldsymbol{\varphi}_j^{\delta}-\boldsymbol{\varphi}_j^* \right\|^2_{L^2(\partial B_1)\times L^2(\partial B_2)}
			&=\left\| \boldsymbol{\varphi}_j^{\delta} \right\|^2_{L^2(\partial B_1)\times L^2(\partial B_2)}-2\Re\langle\boldsymbol{\varphi}_j^{\delta}, \boldsymbol{\varphi}_j^*\rangle
			+\left\| \boldsymbol{\varphi}_j^* \right\|^2_{L^2(\partial B_1)\times L^2(\partial B_2)} \\
			&\le 2 \left( \left\| \boldsymbol{\varphi}_j^*\right \|^2_{L^2(\partial B_1)\times L^2(\partial B_2)}-\Re\langle\boldsymbol{\varphi}_j^{\delta}, \boldsymbol{\varphi}_j^*\rangle \right)\\
			&=2\Re\langle\boldsymbol{\varphi}_j^*-\boldsymbol{\varphi}_j^{\delta}, \boldsymbol{\varphi}_j^*\rangle .
		\end{align*}
	    where $\langle \cdot , \cdot \rangle$ denotes the $L^2$-inner product on $\partial B_1\times \partial B_2$.
	
		From (a), let $ \tilde{\bm{\varphi}}_j =\mathcal{S}^*\boldsymbol{h}_j\in L^2(\partial B_1)\times L^2(\partial B_2)$ such that $\left\|\tilde{\boldsymbol{\varphi}}_j-\boldsymbol{\varphi}^*\right\|_{L^2(\partial B_1)\times L^2(\partial B_2)}\le \varepsilon $. Since
		$
		\left\|\boldsymbol{u}_j^*-\boldsymbol{u}_j^\delta \right\|_{L^2(\Gamma_1)\times L^2(\Gamma_2)} \leq \left\|\boldsymbol{u}_j^*-\boldsymbol{u}_j \right\|_{L^2(\Gamma_1)\times L^2(\Gamma_2)}+\left\|\boldsymbol{u}_j-\boldsymbol{u}_j^\delta \right\|_{L^2(\Gamma_1)\times L^2(\Gamma_2)}<\varepsilon+\delta
		$, we obtain
		\begin{align*}
			\left \| \boldsymbol{\varphi}_j^{\delta}-\boldsymbol{\varphi}_j^* \right\|^2_{L^2(\partial B_1)\times L^2(\partial B_2)}
			&\le 2\Re\langle\boldsymbol{\varphi}_j^*-\boldsymbol{\varphi}_j^{\delta}, \boldsymbol{\varphi}_j^*-\tilde{\boldsymbol{\varphi}}_j\rangle+2\Re\langle\boldsymbol{\varphi}_j^*-\boldsymbol{\varphi}_j^{\delta}, \mathcal{S}^*\boldsymbol{h}_j\rangle \\
			&\le 2\varepsilon \left\| \boldsymbol{\varphi}_j^*-\boldsymbol{\varphi}_j^{\delta} \right\|_{L^2(\partial B_1)\times L^2(\partial B_2)}  +2\Re\langle\boldsymbol{u}_j^*-\mathcal{S}\boldsymbol{\varphi}_j^{\delta}, \boldsymbol{h}_j\rangle  \\
			&\le 2\varepsilon\left\| \boldsymbol{\varphi}_j^*-\boldsymbol{\varphi}_j^{\delta} \right\|_{L^2(\partial B_1)\times L^2(\partial B_2)} + 2\Re\langle\boldsymbol{u}_j^*-\boldsymbol{u}_j^{\delta}, \boldsymbol{h}_j\rangle  \\
			&\quad +2\Re\langle\boldsymbol{u}_j^{\delta}-\mathcal{S}\boldsymbol{\varphi}_j^{\delta}, \boldsymbol{h}_j\rangle \\
			&\le 2\varepsilon\left\| \boldsymbol{\varphi}_j^*-\boldsymbol{\varphi}_j^{\delta} \right\|_{L^2(\partial B_1)\times L^2(\partial B_2)}+4(\delta+\varepsilon)E_j.
		\end{align*}
		This means $ \left( \left\| \boldsymbol{\varphi}_j^*-\boldsymbol{\varphi}_j^{\delta} \right\|_{L^2(\partial B_1)\times L^2(\partial B_2)}-\varepsilon \right)^2 \le \varepsilon^2+4(\delta+\varepsilon)E_j $, and thus,
		$$
		\left\| \boldsymbol{\varphi}_j^{\delta}-\boldsymbol{\varphi}_j^* \right\|_{L^2(\partial B_1)\times L^2(\partial B_2)} \le 2\varepsilon+2\sqrt{(\delta+\varepsilon)E_j} .
		$$
		This completes the proof.
	\end{proof}
	
	From Theorem \ref{Theorem3.2} and $\varepsilon\ll 1$, we obtain the error estimates of the decomposition:
	\begin{align}
		\max_{1\leq j\leq N}\left\|u^i_{\alpha(\delta),\delta}(\cdot;z_j)-u^i(\cdot;z_j)\right\|_{L^2(\partial \Omega_\Sigma)} & \leq   C_1\sqrt{\delta+\varepsilon}, \label{errorui} \\
		\max_{1\leq j\leq N}\left\|u^s_{\alpha(\delta),\delta}(\cdot;z_j) -u^s(\cdot;z_j)\right\|_{L^2(\partial D)} & \leq   C_2\sqrt{\delta+\varepsilon}, 	\label{errorus}
	\end{align}
	where $\Omega_\Sigma\supset \Omega_1$, $C_1=C_1(\partial \Omega_\Sigma)$ and $C_2=C_2(\partial D)$ are positive constants.

	%%%%%%%%%%%%%%%%%%%%%%%%%%%%%%%%%%%%%%%%%%%%%%%%%%%%%%%%%%%%%%%%%%%%%
	\subsection{Uncoupled subproblems}
	%%%%%%%%%%%%%%%%%%%%%%%%%%%%%%%%%%%%%%%%%%%%%%%%%%%%%%%%%%%%%%%%%%%%%
	
	Once the incident and scattering components have been decoded from the measurements of the total field, the co-inversion problem \eqref{co-inversion} can be completely decoupled into the following two inverse problems:
	
	\begin{subproblem}\label{subPro1}
		Reconstruct the boundary of the cavity $\partial D$ from the decoupled data in \eqref{Tikhonov_ui}-\eqref{Tikhonov_us}, $\left\{u^i_{\alpha(\delta),\delta}(x;z_j)\right\}_{j=1}^N$ and $\left\{u^s_{\alpha(\delta),\delta}(x;z_j)\right\}_{j=1}^N$.
	\end{subproblem}
	
	\begin{subproblem}\label{subPro2}
		Determine the locations of the source points $S$ from the decoupled data in \eqref{Tikhonov_ui}, $\left\{u^i_{\alpha(\delta),\delta}(x;z_j)\right\}_{j=1}^N$.
	\end{subproblem}

     Numerical methods for solving these subsequent problems will be presented in the next section.
	
	%%%%%%%%%%%%%%%%%%%%%%%%%%%%%%%%%%%%%%%%%%%%%%%%%%%%%%%%%%%%%%%%%%%%%
	\section{Imaging algorithms}\label{sec:algorithms}
	%%%%%%%%%%%%%%%%%%%%%%%%%%%%%%%%%%%%%%
	The aim of this section is to develop imaging algorithms to separately solve \cref{subPro1} and \cref{subPro2}, namely, find the shape of cavity and determine the source locations. Stability of the numerical methods will be analyzed as well.
	Motivated by the classical optimization method for the inverse obstacle scattering problem \cite{CK13}, in subsection \ref{subsec:algorithm1} we propose an optimization method for solving \cref{subPro1}. Then a direct sampling method is developed to solve \cref{subPro2} in subsection \ref{subsec:algorithm2}. We would like to point out that, in the following optimization (resp. sampling) method, the incident field in the cost function (resp. the imaging function) is an intermediate quantity decoupled numerically from the total field. This is essentially different from the conventional optimization or sampling method, where the incident field is known in advance.
	
	%%%%%%%%%%%%%%%%%%%%%%%%%%%%%%%%%%%%%%%%%%%%%%%%%%%%%%%%%%%%%%%%%%%%%
	\subsection{Optimization for imaging the cavity}\label{subsec:algorithm1}
	%%%%%%%%%%%%%%%%%%%%%%%%%%%%%%%%%%%%%%%%%%%%%%%%%%%%%%%%%%%%%%%%%%%%%
	In this subsection, we present the optimization method for reconstructing the boundary of the cavity. The boundary $\partial D$ is then sought for by minimizing the defect
	\begin{equation}\label{Optimization}
		\sum_{j=1}^{N}\left\|u^i_{\alpha(\delta),\delta}(\ \cdot\ ;z_j) + u^s_{\alpha(\delta),\delta}(\ \cdot\ ;z_j)\right\|_{L^2(\Gamma)}
	\end{equation}
	over some class $U$ of admissible curves $\Gamma$. Here the admissible class $U$ is the compact set (with respect to the $C^{1,\beta}$ norm, $0 < \beta < 1$) of all star-like closed $C^2$ curves, described by
	\begin{equation*}
		\Gamma=\{r(\hat{x})\hat{x}: \hat{x}\in \mathbb{S}\}, \ r\in C^2(\mathbb{S})
	\end{equation*}
	with the a prior information
	\begin{equation}\label{eq:priori}
		0<a\leq r(\hat{x})\leq b, \ \hat{x}\in \mathbb{S},
	\end{equation}
	where $a$ and $b$ are positive constants, $\mathbb{S}=\{x\in\mathbb{R}^2:|x|=1\}.$
	
	To investigate the convergence properties of the optimization method, we define the cost functional $\mu: U \to \mathbb R$ by
	\begin{equation}\label{costfunctional}
		\mu(\Gamma;\delta)=\sum_{j=1}^{N}
		\left\|\mathcal{S}_1\varphi_{j,1}^{\alpha(\delta),\delta}+\mathcal{S}_2\varphi_{j,2}^{\alpha(\delta),\delta}\right \|^2_{L^2(\Gamma)} ,
	\end{equation}
	where $\Gamma\in U$, and introduce the following definition of optimal curve.
	\begin{definition}\label{Opmcurve}
		Given the measured total field $\boldsymbol{u}_j^{\delta}, j=1,\cdots, N$ and a regularization parameter $\alpha(\delta)>0$, a curve $\Gamma^*\in U$ is called optimal if $\Gamma^*$ minimizes the cost functional \eqref{costfunctional}, i.e.,
		$$
		\mu(\Gamma^*; \delta):=m(\delta),
		$$
		where
		$$
		m(\delta)={\mathop{\inf}\limits_{\Gamma\in U}}\ \mu(\Gamma;\delta).
		$$
	\end{definition}
	
	In terms of Definition \ref{Opmcurve}, the following theorem holds.
	\begin{theorem}
		Assume $\partial D$ belongs to $U$, then for each $\delta>0$, there exists an optimal curve $\Gamma^*\in U$, satisfying
		\begin{equation*}
			\mu(\Gamma^*;\delta)\le C\sqrt{\delta+\varepsilon},
		\end{equation*}
		where $C$ is a positive constant.
	\end{theorem}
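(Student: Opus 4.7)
The plan is to prove the two assertions of the theorem — existence of a minimizer $\Gamma^\ast\in U$ and the quantitative bound — by a compactness/continuity argument for existence and by using $\partial D$ itself as a test competitor for the bound.

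First, for existence, I would note that the admissible class $U$ is compact in the $C^{1,\beta}$ topology by construction (star-like $C^2$ curves with radial function bounded by the a priori constants $a,b$ in \eqref{eq:priori}). The densities $\varphi_{j,m}^{\alpha(\delta),\delta}$ are determined by the measurements on $\Gamma_1,\Gamma_2$ and do \emph{not} depend on the optimization variable $\Gamma$. The a priori constraint guarantees a uniform positive distance between any $\Gamma\in U$ and the integration surfaces $\partial B_1,\partial B_2$, so the kernel $\Phi(x,y)$ in the potentials $\mathcal{S}_1\varphi_{j,1}^{\alpha(\delta),\delta}$ and $\mathcal{S}_2\varphi_{j,2}^{\alpha(\delta),\delta}$ remains analytic and uniformly bounded together with its derivatives as $\Gamma$ ranges over $U$. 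Standard continuous-dependence results for boundary integrals over $C^{1,\beta}$ varying surfaces then imply that $\mu(\cdot;\delta)$ is continuous on $U$, and the existence of $\Gamma^\ast$ follows from compactness.

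Second, for the bound, I would test the infimum against the true boundary. Since $\partial D\in U$ by assumption, $\mu(\Gamma^\ast;\delta)\le\mu(\partial D;\delta)$. On $\partial D$, the sound-soft condition \eqref{eq:boundary_condition} gives $u^i(\cdot;z_j)+u^s(\cdot;z_j)=0$, and because $B_1\subset\Omega_1\subset D\subseteq B_2$, the boundary $\partial D$ lies in $B_2\setminus\overline{B_1}$, where both single-layer approximations \eqref{Tikhonov_ui}--\eqref{Tikhonov_us} are valid. Hence, for each $j$, rewriting $\mathcal{S}_1\varphi_{j,1}^{\alpha(\delta),\delta}+\mathcal{S}_2\varphi_{j,2}^{\alpha(\delta),\delta}=u^i_{\alpha(\delta),\delta}(\cdot;z_j)+u^s_{\alpha(\delta),\delta}(\cdot;z_j)$ and subtracting the vanishing quantity $u^i(\cdot;z_j)+u^s(\cdot;z_j)$ on $\partial D$, the triangle inequality yields
\begin{equation*}
\left\|\mathcal{S}_1\varphi_{j,1}^{\alpha(\delta),\delta}+\mathcal{S}_2\varphi_{j,2}^{\alpha(\delta),\delta}\right\|_{L^2(\partial D)}\le\left\|u^i_{\alpha(\delta),\delta}(\cdot;z_j)-u^i(\cdot;z_j)\right\|_{L^2(\partial D)}+\left\|u^s_{\alpha(\delta),\delta}(\cdot;z_j)-u^s(\cdot;z_j)\right\|_{L^2(\partial D)}.
\end{equation*}
Applying the decomposition stability estimates \eqref{errorui} (with $\Omega_\Sigma$ chosen so that $\partial\Omega_\Sigma=\partial D$, which is permissible since $\Omega_1\subset D$) and \eqref{errorus}, each term on the right is bounded by $C_i\sqrt{\delta+\varepsilon}$. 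Squaring, summing over $j=1,\dots,N$, and absorbing $N(C_1+C_2)^2(\delta+\varepsilon)\le C\sqrt{\delta+\varepsilon}$ (using $\delta+\varepsilon\ll 1$) produces the claimed estimate.

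The main obstacle is the existence step, where continuity of $\mu$ in the moving-boundary variable $\Gamma$ must be handled with some care: the $L^2(\Gamma)$ norm depends on $\Gamma$ through both the integrand and the surface measure, so the argument relies crucially on the uniform positive separation between $\Gamma$ and $\partial B_1\cup\partial B_2$ provided by \eqref{eq:priori}. By contrast, the bound itself is essentially a one-line calculation once the sound-soft condition is used and the previously established stability estimates \eqref{errorui}--\eqref{errorus} are invoked.
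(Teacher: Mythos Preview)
Your proposal is correct and follows essentially the same route as the paper: compactness of $U$ for existence, then testing the infimum against $\partial D$ and using the sound-soft condition together with the decomposition estimates \eqref{errorui}--\eqref{errorus} for the bound. If anything, you are a bit more explicit than the paper about the continuity of $\mu(\cdot;\delta)$ needed to pass to the limit along a minimizing sequence, and you invoke \eqref{errorui}--\eqref{errorus} directly where the paper instead inserts the intermediate densities $\varphi_{j,m}^*$, but these are cosmetic differences.
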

	\begin{proof}
		Let $\{\Gamma^n\}$ be a minimizing sequence in $U$, i.e.,
		$$
		\lim_{n\to\infty}\mu(\Gamma^n;\delta)=m(\delta).
		$$
		Since $U$ is compact, we can assume that $\Gamma^n\to\Gamma^*$, $n\to\infty$. Further, by $\partial D\in U$, we have
		\begin{equation*}
			\mu(\Gamma^*;\delta)\le \mu(\partial D;\delta)=\sum_{j=1}^{N}\left\|\mathcal{S}_1\varphi_{j,1}^{\alpha(\delta),\delta}+\mathcal{S}_2\varphi_{j,2}^{\alpha(\delta),\delta} \right\|^2_{L^2(\partial D)}.
		\end{equation*}
		From \eqref{varepsilon}, \eqref{errorui} and \eqref{errorus}, we deduce
		\begin{align*}
			\left\|\mathcal{S}_1\varphi_{j,1}^{\alpha(\delta),\delta}+\mathcal{S}_2\varphi_{j,2}^{\alpha(\delta),\delta} \right\|^2_{L^2(\partial D)} &\le \left\|\mathcal{S}_1\varphi_{j,1}^{\alpha(\delta),\delta}-u^i(\ \cdot\ ;z_j) \right\|^2_{L^2(\partial D)}+\left\|\mathcal{S}_2\varphi_{j,2}^{\alpha(\delta),\delta}-u^s(\ \cdot\ ;z_j) \right\|^2_{L^2(\partial D)}\\
			&\le \left\|\mathcal{S}_1\varphi_{j,1}^{\alpha(\delta),\delta}-\mathcal{S}_1\varphi_{j,1}^*\right\|^2_{L^2(\partial D)}+\left\|\mathcal{S}_1\varphi_{j,1}^*-u^i(\ \cdot \ ;z_j) \right\|^2_{L^2(\partial D)} \\
			&\quad+\left\|\mathcal{S}_2\varphi_{j,2}^{\alpha(\delta),\delta}-\mathcal{S}_2\varphi_{j,2}^*\right\|^2_{L^2(\partial D)}+\left\|\mathcal{S}_2\varphi_{j,2}^*-u^s(\ \cdot \ ; z_j) \right\|^2_{L^2(\partial D)}
			\\
			&\le 2C_3(\sqrt{\delta+\varepsilon}+\varepsilon),
		\end{align*}
		where $C_3>0$. This implies
		$$
		\mu(\Gamma^*; \delta)\le C\sqrt{\delta+\varepsilon}.
		$$
	\end{proof}
	
	%%%%%%%%%%%%%%%%%%%%%%%%%%%%%%%%%%%%%%%%%%%%%%%%%%%%%%%%%%%%%%%%%%%%%
	\subsection{Direct sampling for locating the point sources}\label{subsec:algorithm2}
	%%%%%%%%%%%%%%%%%%%%%%%%%%%%%%%%%%%%%%%%%%%%%%%%%%%%%%%%%%%%%%%%%%%%%
	
	In this subsection, a direct sampling method is proposed for determining the locations of the source points.
	
	For any sampling point $y\in B_1$, we introduce the following indicator functions
	\begin{equation}\label{indicator}
		I_j(y)=\Re \left\{ \left\langle u_{\alpha(\delta),\delta}^i(\ \cdot \ ;z_j),\mathrm{e}^{\mathrm{i}(k|\cdot-y|+\frac{\pi}{4})}\right\rangle_{L^2(\Gamma_3)} \right\}, \quad  j=1,\cdots,N,
	\end{equation}
	where $\left\langle \cdot , \cdot \right\rangle$ denotes the $L^2$-inner product, and $\Gamma_3:=\{x\in  \mathbb{R}^2:|x|=R_3\}$ such that $R_3\gg R_1$. We take the maximum point $z_j^*\in B_1$ of the indicator function $I_j(y)$ for each $j$ as an approximation of the exact source point $z_j$.
	
	The following theorem sheds light on the indicating properties of $I_j(y),  j=1, \cdots, N$.
	
	\begin{theorem}\label{ThmIndicator}	
		For all $y\in B_1$, we have
		\begin{align*}
			I_j(y)&= \frac{1}{2\sqrt{2\pi k}}\int_{\Gamma_3}\frac{\cos(k(|x-z_j|-|x-y|))}{\sqrt{|x-z_j|}}\mathrm{d}s(x) \\
			&\quad +\mathcal{O}\left(k^{-1}(R_3-R_1)^{-3/2}\right)+\mathcal{O}\left((\delta+\varepsilon)^{1/2}\right),\quad j=1,\cdots,N.
		\end{align*}
	\end{theorem}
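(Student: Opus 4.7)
The plan is to obtain the claimed expression by replacing the numerically decoupled incident field $u^i_{\alpha(\delta),\delta}(\,\cdot\,;z_j)$ in the definition of $I_j$ by the exact point-source field $u^i(\,\cdot\,;z_j)=\tfrac{\mathrm i}{4}H_0^{(1)}(k|\cdot-z_j|)$, and then applying the standard large-argument asymptotic of the Hankel function $H_0^{(1)}$ to the resulting boundary integral over $\Gamma_3$. These two reductions generate, respectively, the error terms $\mathcal{O}((\delta+\varepsilon)^{1/2})$ and $\mathcal{O}(k^{-1}(R_3-R_1)^{-3/2})$, while the remaining principal contribution coincides exactly with the cosine integral in the statement.

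For the first reduction, I would split
\[
I_j(y)=\Re\bigl\langle u^i(\,\cdot\,;z_j),\,\mathrm{e}^{\mathrm i(k|\cdot-y|+\pi/4)}\bigr\rangle_{L^2(\Gamma_3)}+\Re\bigl\langle u^i_{\alpha(\delta),\delta}(\,\cdot\,;z_j)-u^i(\,\cdot\,;z_j),\,\mathrm{e}^{\mathrm i(k|\cdot-y|+\pi/4)}\bigr\rangle_{L^2(\Gamma_3)}.
\]
By Cauchy--Schwarz the second inner product is bounded by $\|u^i_{\alpha(\delta),\delta}-u^i\|_{L^2(\Gamma_3)}\cdot\sqrt{2\pi R_3}$, and invoking the decoupling estimate \eqref{errorui} with $\Gamma_3$ playing the role of $\partial\Omega_\Sigma$ (permissible because $R_3\gg R_1$ so $\Gamma_3$ encloses $\Omega_1$) immediately yields the $\mathcal{O}((\delta+\varepsilon)^{1/2})$ contribution.

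For the second reduction, I would substitute the classical asymptotic
\[
H_0^{(1)}(t)=\sqrt{\tfrac{2}{\pi t}}\,\mathrm{e}^{\mathrm i(t-\pi/4)}+\mathcal{O}(t^{-3/2}),\quad t\to\infty,
\]
with $t=k|x-z_j|\ge k(R_3-R_1)$ uniformly for $x\in\Gamma_3$, $z_j\in B_1$. A direct computation then reduces the principal inner product to
\[
\tfrac{\mathrm i}{4}\cdot\mathrm{e}^{-\mathrm i\pi/2}\sqrt{\tfrac{2}{\pi k}}\int_{\Gamma_3}\frac{\mathrm{e}^{\mathrm i k(|x-z_j|-|x-y|)}}{\sqrt{|x-z_j|}}\,\mathrm{d}s(x)=\tfrac{1}{2\sqrt{2\pi k}}\int_{\Gamma_3}\frac{\mathrm{e}^{\mathrm i k(|x-z_j|-|x-y|)}}{\sqrt{|x-z_j|}}\,\mathrm{d}s(x),
\]
whose real part is precisely the claimed cosine integral. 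The remainder arising from the $\mathcal{O}(t^{-3/2})$ tail is uniformly dominated pointwise by $\mathcal{O}(k^{-3/2}(R_3-R_1)^{-3/2})$; integrating over the fixed circle $\Gamma_3$ absorbs the arclength into the constant and delivers a term of the stated order $\mathcal{O}(k^{-1}(R_3-R_1)^{-3/2})$.

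The main obstacle is purely a matter of bookkeeping: tracking the prefactors so that the two $\mathrm i$ factors and the $\pi/4$ phases cancel cleanly to give the coefficient $\tfrac{1}{2\sqrt{2\pi k}}$, and confirming that the $L^2$-estimate \eqref{errorui} is applicable on $\Gamma_3$ (which follows from the single-layer representation being valid on $\mathbb{R}^2\setminus\overline{B_1}$ together with continuous dependence on the Dirichlet data transported from $\partial\Omega_\Sigma$ to $\Gamma_3$). Once these constants are pinned down, the two additive error contributions combine in the obvious way and the asserted asymptotic identity follows.
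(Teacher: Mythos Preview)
Your proposal is correct and follows essentially the same route as the paper: split $I_j(y)$ into the exact-incident-field contribution plus the decoupling error, bound the latter via Cauchy--Schwarz together with \eqref{errorui}, and evaluate the former by substituting the large-argument asymptotic of $H_0^{(1)}$ and tracking the phase factors. The only cosmetic difference is the order in which the two reductions are presented; the ingredients, the constant computation $\tfrac{\mathrm i}{4}\mathrm{e}^{-\mathrm i\pi/2}\sqrt{2/(\pi k)}=\tfrac{1}{2\sqrt{2\pi k}}$, and the resulting error orders are identical to the paper's argument.
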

	\begin{proof}
		From (3.82) in \cite{CK13}, we have the following asymptotic behavior of the Hankel functions
		$$
		H_0^{(1)}(t)=\sqrt{\frac{2}{\pi t}}\mathrm{e}^{\mathrm{i}(t-\frac{\pi}{4})}
		\left\{1+\mathcal{O}\left(\frac{1}{t}\right)\right\},\quad t\to\infty.
		$$
		Then, we deduce
		\begin{align*}
			&\quad\Re \left\{ \left\langle u^i(x; z_j),\mathrm{e}^{\mathrm{i}(k|x-y|+\frac{\pi}{4})}\right\rangle_{L^2(\Gamma_3)} \right\} \\
			&=\Re\left\{\int_{\Gamma_3}u^i(x;z_j)\ \mathrm{e}^{-\mathrm{i}(k|x-y|+\frac{\pi}{4})}\mathrm{d}s(x) \right\} \\
			&=\Re\left\{\int_{\Gamma_3}\frac{\mathrm{i}}{4}H_0^{(1)}(k|x-z_j|)\ \mathrm{e}^{-\mathrm{i}(k|x-y|-\frac{\pi}{4})}\mathrm{e}^{-\mathrm{i}\frac{\pi}{2}}\mathrm{d}s(x) \right\} \\
			&=\frac{1}{4}\Re\left\{\int_{\Gamma_3}\sqrt{\frac{2}{\pi k |x-z_j|}}\ \mathrm{e}^{\mathrm{i}k(|x-z_j|-|x-y|)}\mathrm{d}s(x) \right\}+\mathcal{O}\left(k^{-1}(R_3-R_1)^{-3/2}\right) \\
			&=\frac{\sqrt{2}}{4\sqrt{\pi k}}\int_{\Gamma_3}\frac{\cos(k(|x-z_j|-|x-y|))}{\sqrt{|x-z_j|}}\mathrm{d}s(x)+\mathcal{O}\left(k^{-1}(R_3-R_1)^{-3/2}\right).
		\end{align*}
		Further, by \eqref{varepsilon}, \eqref{errorui} and the Schwarz inequality, we have for each $y\in B_1$,
		\begin{align*}
			I_j(y)
			&= \Re\left\{\int_{\Gamma_3}\left(u_{\alpha(\delta),\delta}^i(x; z_j)-u^i(x; z_j)\right)
			\mathrm{e}^{-\mathrm{i}\left(k|x-y|+\frac{\pi}{4}\right)}\mathrm{d}s(x) \right.\\
			&\left.  \quad+\int_{\Gamma_3}u^i(x; z_j)\mathrm{e}^{-\mathrm{i}\left(k|x-y|+\frac{\pi}{4}\right)}\mathrm{d}s(x) \right\} \\
			&=\mathcal{O}\left((\delta+\varepsilon)^{1/2}\right)+\frac{\sqrt{2}}{4\sqrt{\pi k}}\int_{\Gamma_3}\frac{\cos(k(|x-z_j|-|x-y|))}{\sqrt{|x-z_j|}}\mathrm{d}s(x)\\
			& +\mathcal{O}\left(k^{-1}(R_3-R_1)^{-3/2}\right).
		\end{align*}
		This completes the proof.
	\end{proof}
	
	In virtue of the above theorem, we see that each function $I_j(y)$ should decay as the sampling point $y$ recedes from the corresponding source point $z_j$. In particular, for small $\delta$, function $I_j(y)$ takes the local maximum value at $y=z_j$,
	$$
	I_j(z_j)\approx \frac{\sqrt{2}}{4\sqrt{\pi k}}\int_{\Gamma_3}\frac{1}{\sqrt{|x-z_j|}}\mathrm{d}s(x), \quad j=1,\cdots,N.
	$$
	This indicating behavior will be numerically tested by the experiments in the next section. 
	
	We end this section with a brief description of our algorithm for the co-inversion problem.
	
	\begin{table}
		\centering
		\begin{tabular}{cp{.8\textwidth}}
			\toprule
			\multicolumn{2}{l}{{\bf Algorithm :} Imaging the cavity and its internal point sources.} \\
			\midrule
			{\bf Step 1} &  {\bf Data bi-collection}: take two non-intersecting and closed curves $\Gamma_1$ and $\Gamma_2$ which lie inside $D$ and encompass the source points; Record the noisy total field data  $\bm{u}_j^{\delta}|_{\Gamma_1\cup\Gamma_2 }$,  $j=1,\cdots,N$; \\
			{\bf Step 2} &  {\bf Decoupling}: For $j=1,\cdots, N$, solve the regularized equation \eqref{Tikhonov} with the Morozov's discrepancy principle to find the density $\bm{\varphi}^{\alpha(\delta),\delta}_j$; Then the approximate incident field and scattered field are represented by $u^i_{\alpha(\delta),\delta}$ and $u^s_{\alpha(\delta),\delta}$ via \eqref{Tikhonov_ui} and\eqref{Tikhonov_us}, respectively.	\\
			
			{\bf Step 3} & {\bf Reconstruction.} The uncoupled inversions can be implemented separately: \\
			&	{\bf A: optimization for cavity.}  Select an initial curve $\Gamma^{(0)}\subseteq(B_2\backslash B_1)$ for the boundary $\partial D$, and reconstruct the boundary $\partial D$ by minimizing \eqref{Optimization} over the admissible curves;\\
			& {\bf B: sampling for source.} Choose $\Gamma_3$ and generate a suitable sampling grid $\mathcal{T}$; For $j=1,\cdots,N$,  evaluate the indicator $I_j(y)$ defined in \eqref{indicator} over $\mathcal{T}$  and take the maximum point $z_j^*$ as the reconstruction of $z_j$. \\
			\bottomrule
		\end{tabular}
	\end{table}	

		%%%%%%%%%%%%%%%%%%%%%%%%%%%%%%%%%%%%%%%%%%%%%%%%%%%%%%%%%%%%%%%%%%%%%%%
	\section{Numerical verifications}\label{sec:numerical experiments}
	In this section, we shall conduct a variety of numerical experiments to verify the applicability and effectiveness of our method.
	
	In order to obtain the synthetic total field, we adopt the boundary integral equation method to numerically solve the direct problem \eqref{eq:Helmholtz}-\eqref{eq:boundary_condition} to generate the scattered field and the Nystr\"{o}m method \cite{CK13} is utilized to solve the boundary integral equation. By adding the incident field to the scattered field, we obtain the simulated total field data. For the sake of testing the stability of proposed methods, we further corrupt the total field data ${\bm u}_j$ by adding random noise such that
	$$
	{\bm u}_j^{\delta}:={\bm u}_j+\gamma_1\delta|{\bm u}_j|\mathrm{e}^{\mathrm{i}\pi\gamma_2} ,
	$$
	where $\gamma_1,\gamma_2$ are two uniformly distributed random numbers ranging from $-1$ to $1$ and  $0<\delta<1$ is the noise level.
	
	In our simulations, the parameter $\varepsilon$ in \eqref{varepsilon} is set to be $10^{-16},$ which can be considered to be negligible compared with the discretization error. The integrals over the two auxiliary curves $\partial B_1$ and $\partial B_2$ are numerically approximated by the trapezoidal rule with 90 and 160 grid points, respectively. The regularization parameter $\alpha$ in equation \eqref{Tikhonov} is automatically determined by the Morozov's discrepancy principle.
	
In the direct sampling method for determining the source points, $R_3=15$ is used and the sampling domain is chosen to be $[-1,1]\times[-1,1]$ with $150\times150$ equally distributed sampling points. To reconstruct the boundary $\partial D$ by the optimization method, we solve the nonlinear least-squares problem by the Levenberg-Marquardt algorithm with functional value stopping criterion and successive iterate stopping criterion chosen to be $10^{-6}$ and $10^{-5},$ respectively. The initial curve for the optimization method is chosen to be the unit circle centered at the origin. To formulate the admissible curves in the optimization method, we approximate the cavity by star-like domains whose radial functions $\tilde{r}(t)$ are represented by trigonometric polynomials of degree less than or equal to 8, i.e.,
	\[
	\tilde{r}(t)=a_0+\sum_{j=1}^8(a_j\cos jt+b_j\sin jt).
	\]
	
	In order to quantify the accuracy of reconstructions, we utilize a discrete version of the $L^2$ relative error
	$$
		\frac{\|r^*-\tilde{r}\|_{L^2([0,2\pi])}}{\|r^*\|_{L^2([0,2\pi])}},
	$$
	where $r^*$ and $\tilde{r}$ signify the exact and reconstructed boundary curves, respectively.
	
In the subsequent figures with regard to the geometry setting of the problem, the black solid curves denote the boundaries of the exact cavities,  and the red points indicate the exact source points. The two blue dashed circles stand for the interior measured lines. The black dashed circles located inside and outside of the boundaries, respectively, designate the auxiliary curves. In the subfigures illustrating the reconstructions, the exact and the reconstructed boundaries are displayed as the black solid lines and the red dashed lines, respectively. The exact and reconstructed source points are designated with the black `+' marks and red small circles, respectively.
	
\begin{example}\label{example1}
This example is designed to verify the performance of our method by considering the co-reconstruction of the source points and the $n$-leaf cavity parameterized by
\[
	x_L(t)=(1+0.2\cos nt)(\cos t,\sin t),\quad
\]
where $n\in\mathbb{N}_+$. The wavenumber is taken to be $k=10$ in this example. The radii of the measurement circles are taken to be 0.5 and 0.7. The radii of auxiliary circles are set to be 0.4 and 1.5. We consider three different cavities by modifying $n$. In \cref{fig:starfish1}, we show the model geometry settings together with the reconstructions. In the second column, we depict the images of the indicator function $I_1(y)$ over the sampling domain. It can be seen that the indicator $I_1(y)$ has one significant local maximizer, which matches well with the true source location. The third column shows the final reconstructions subject to $10\%$ noise, where the relative $L^2$ errors for the $n$-leaf cavities are $2.53\% (n=4)$, $1.97\%(n=5)$ and $8.18\%(n=8)$, respectively. These results demonstrate that the proposed method can simultaneously reconstruct the boundaries of the cavities and the source points quantitatively.

\end{example}

\begin{figure}
		\centering	
		\subfigure[]{\includegraphics[width=0.3\textwidth]{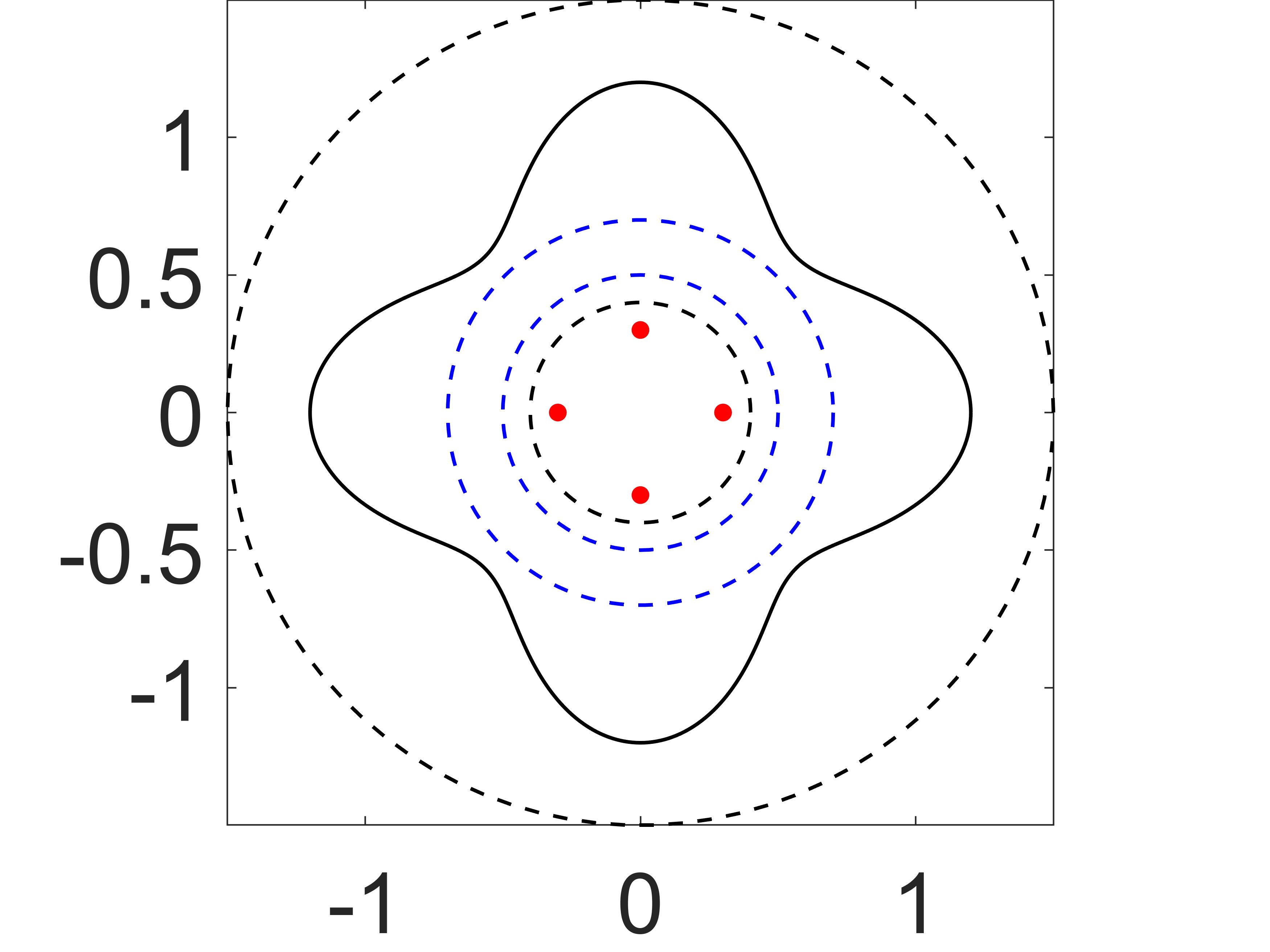}}
		\subfigure[]{\includegraphics[width=0.3\textwidth]{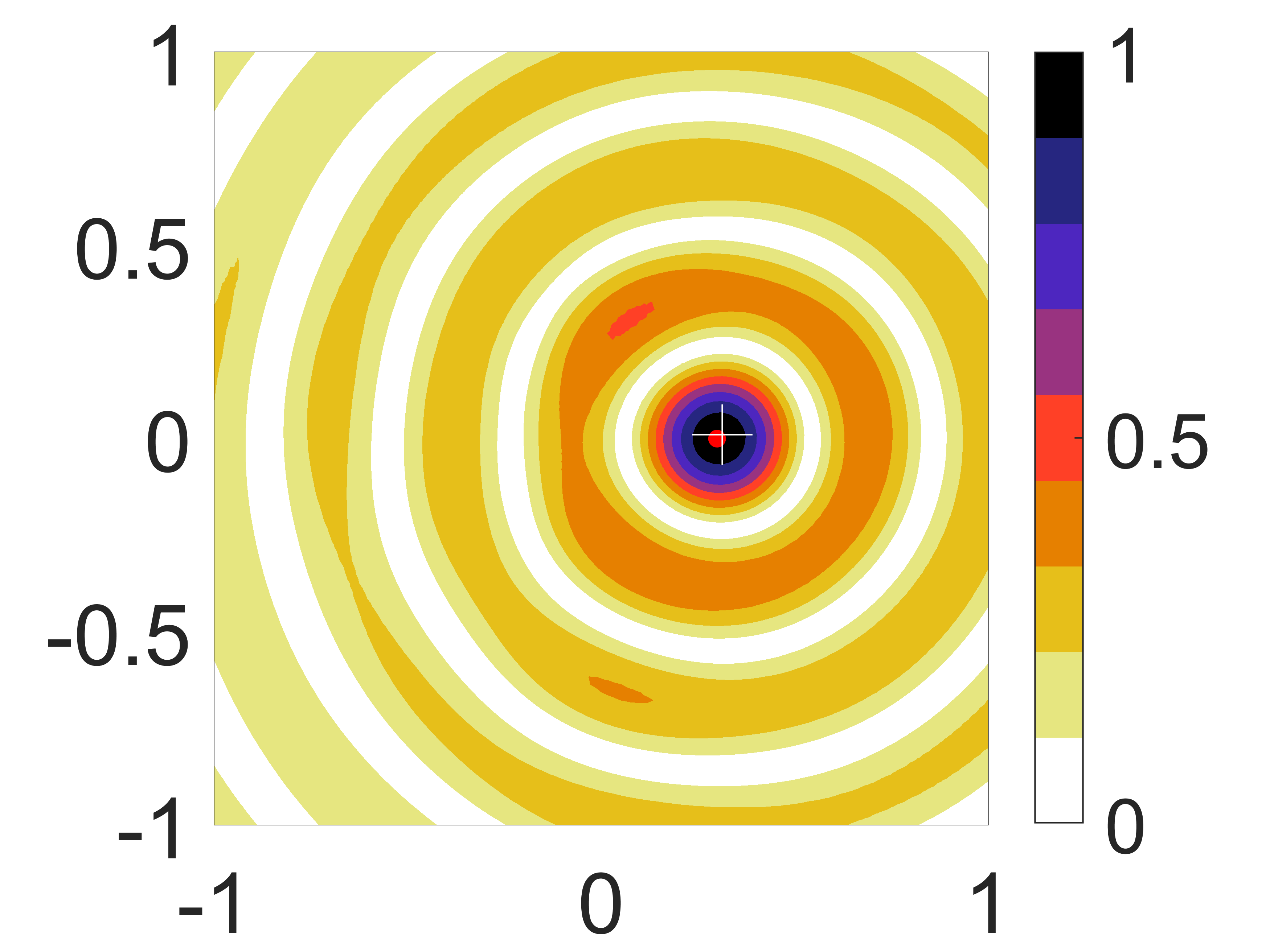}}
		\subfigure[]{\includegraphics[width=0.3\textwidth]{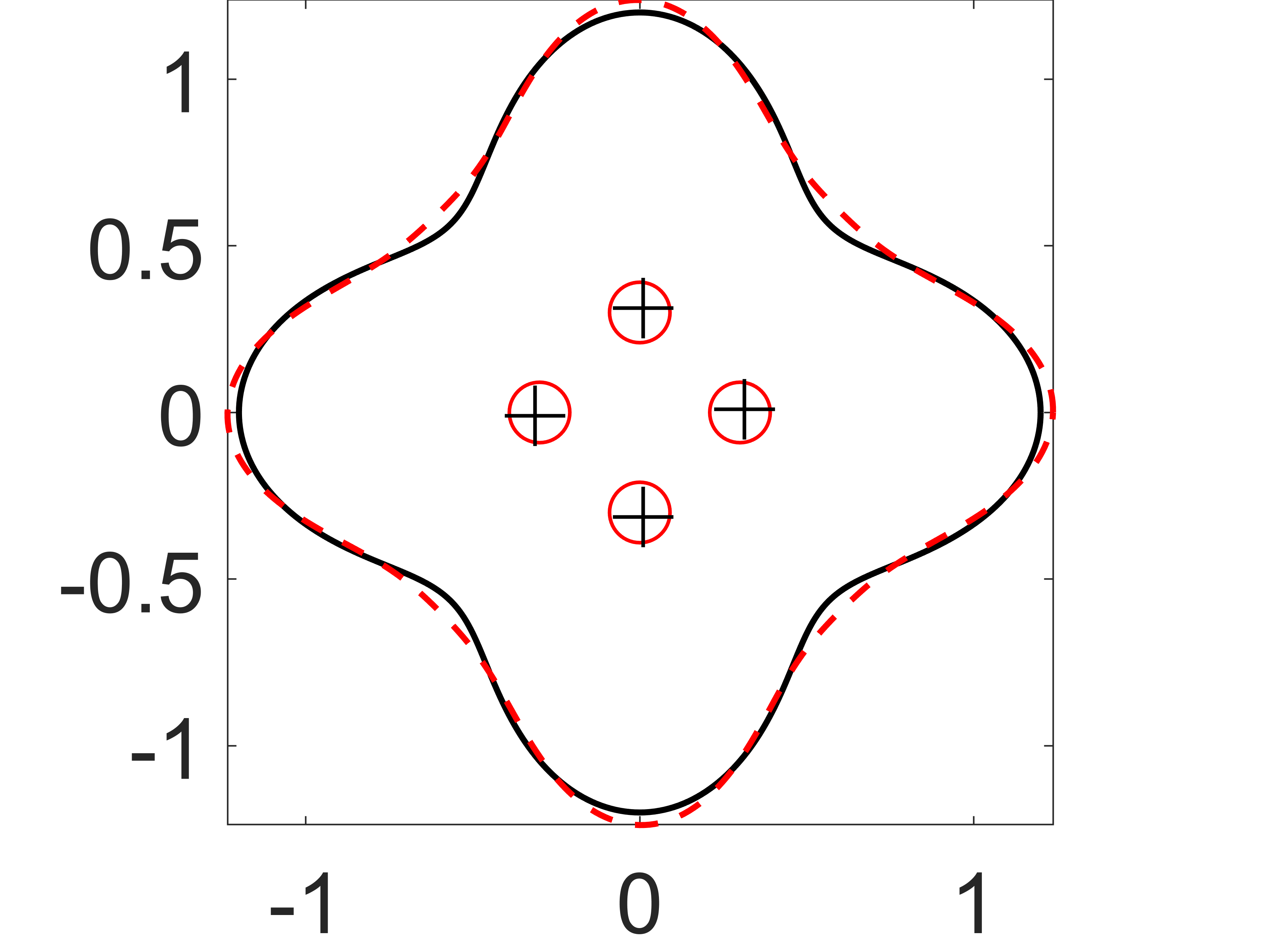}} \\

		\subfigure[]{\includegraphics[width=0.3\textwidth]{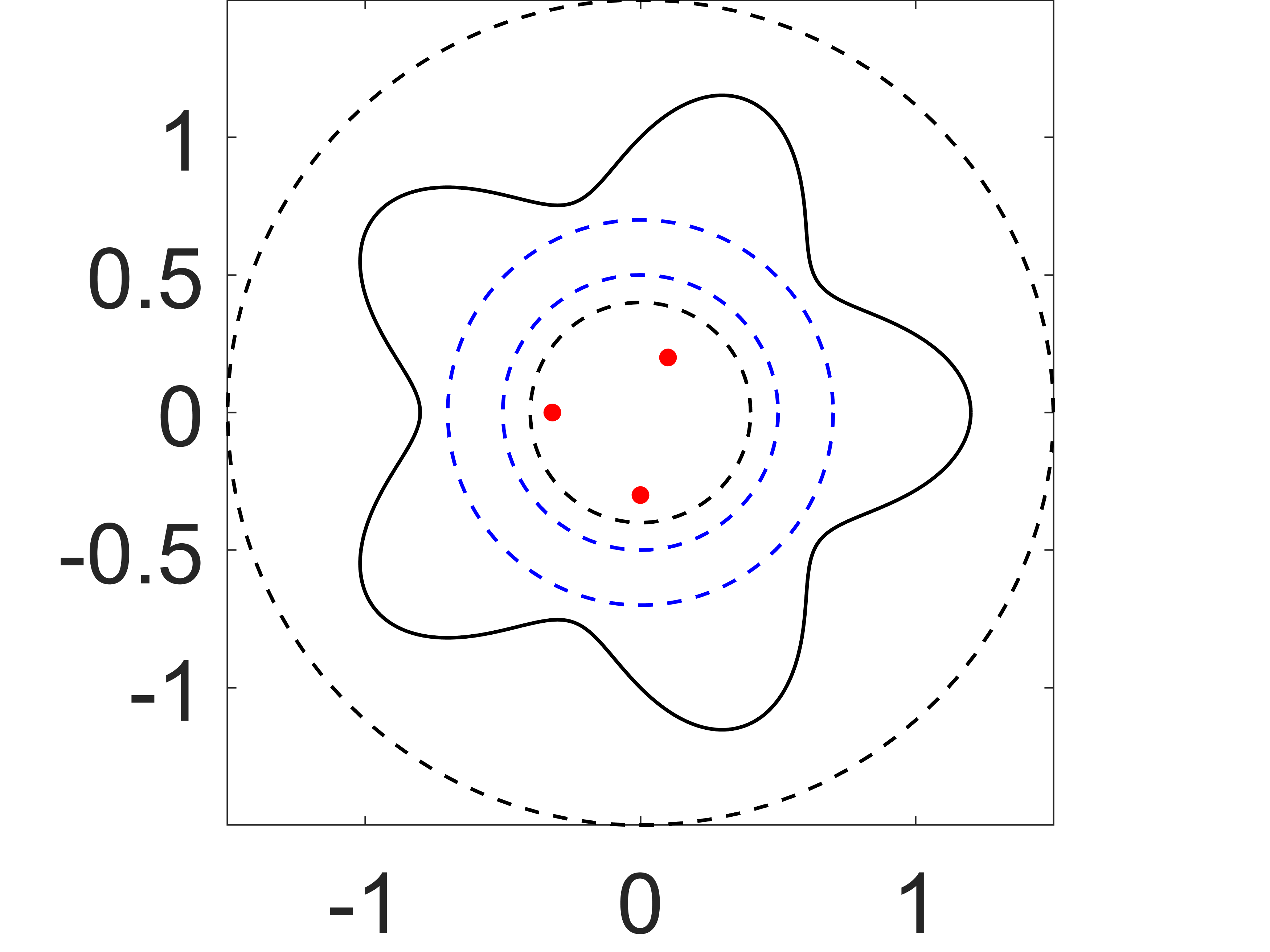}}
		\subfigure[]{\includegraphics[width=0.3\textwidth]{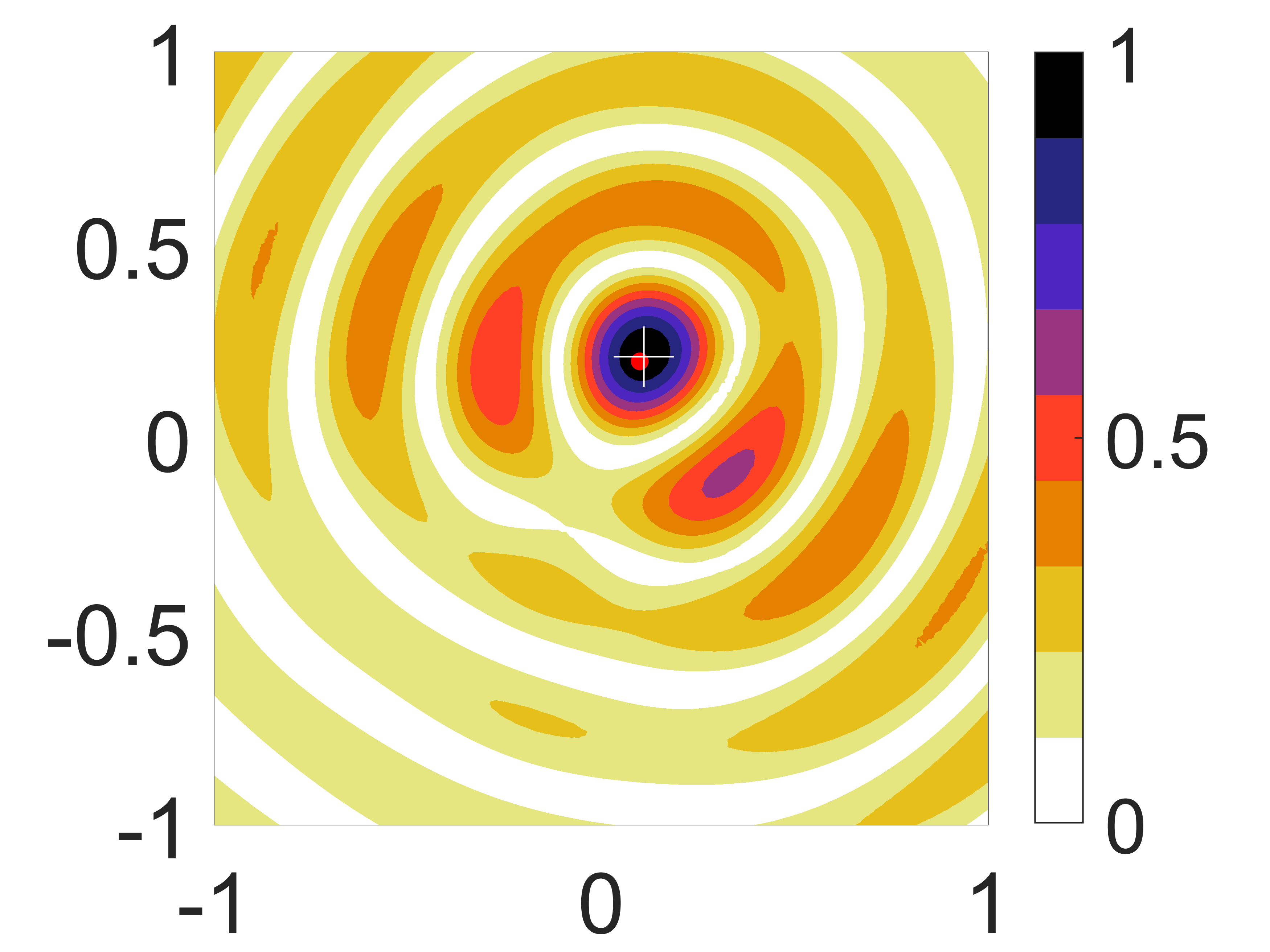}}
		\subfigure[]{\includegraphics[width=0.3\textwidth]{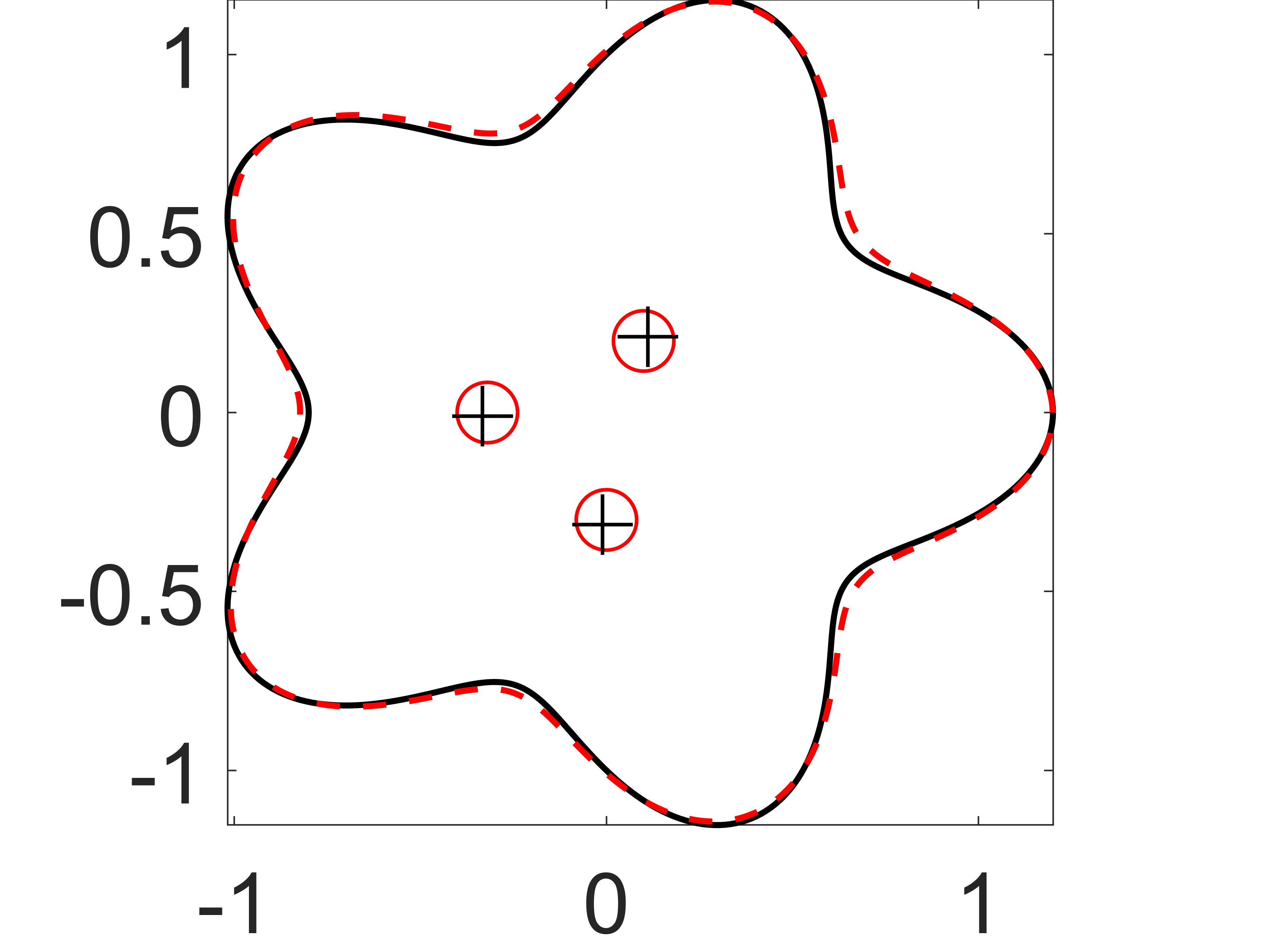}} \\

		\subfigure[]{\includegraphics[width=0.3\textwidth]{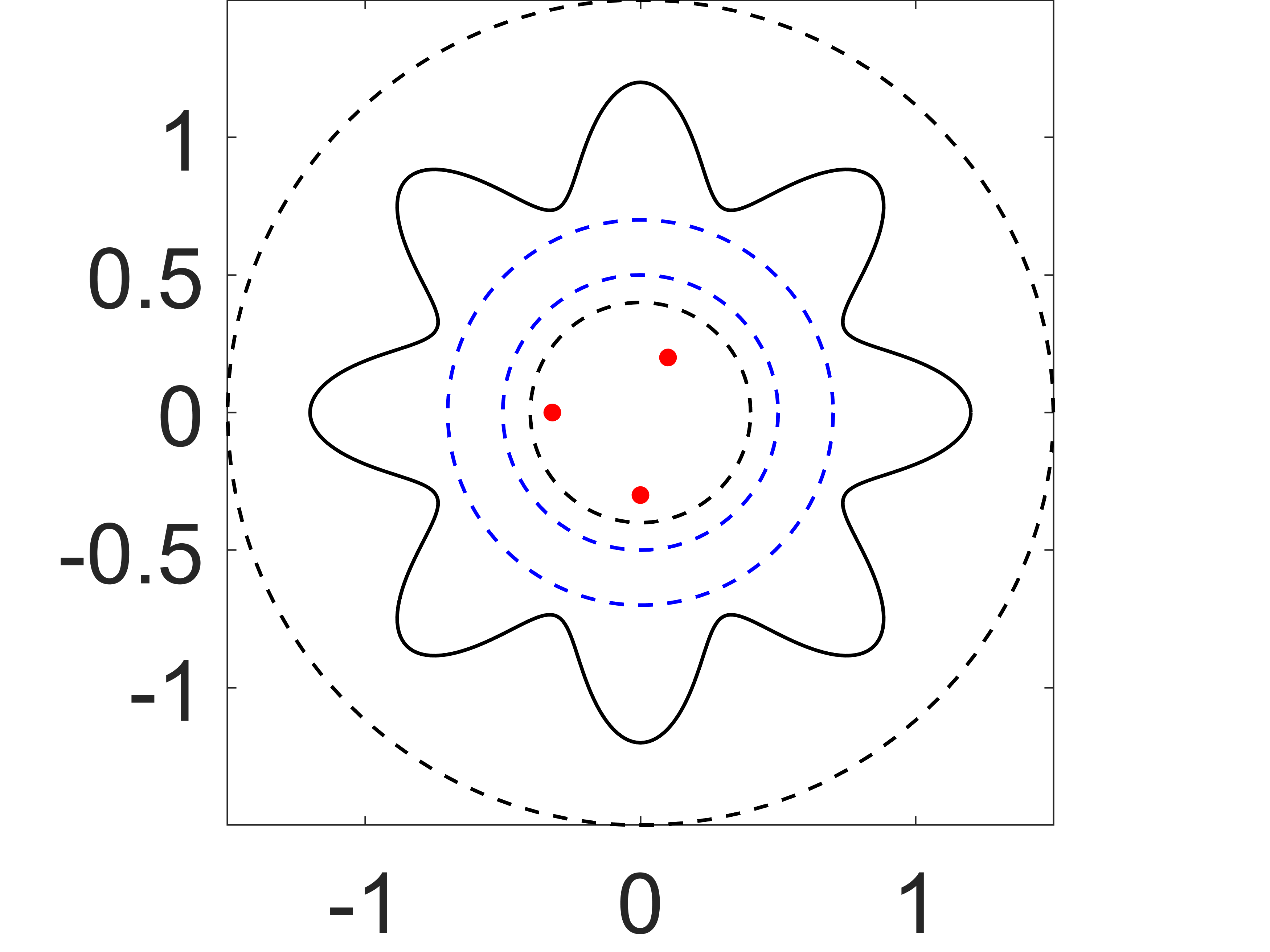}}
		\subfigure[]{\includegraphics[width=0.3\textwidth]{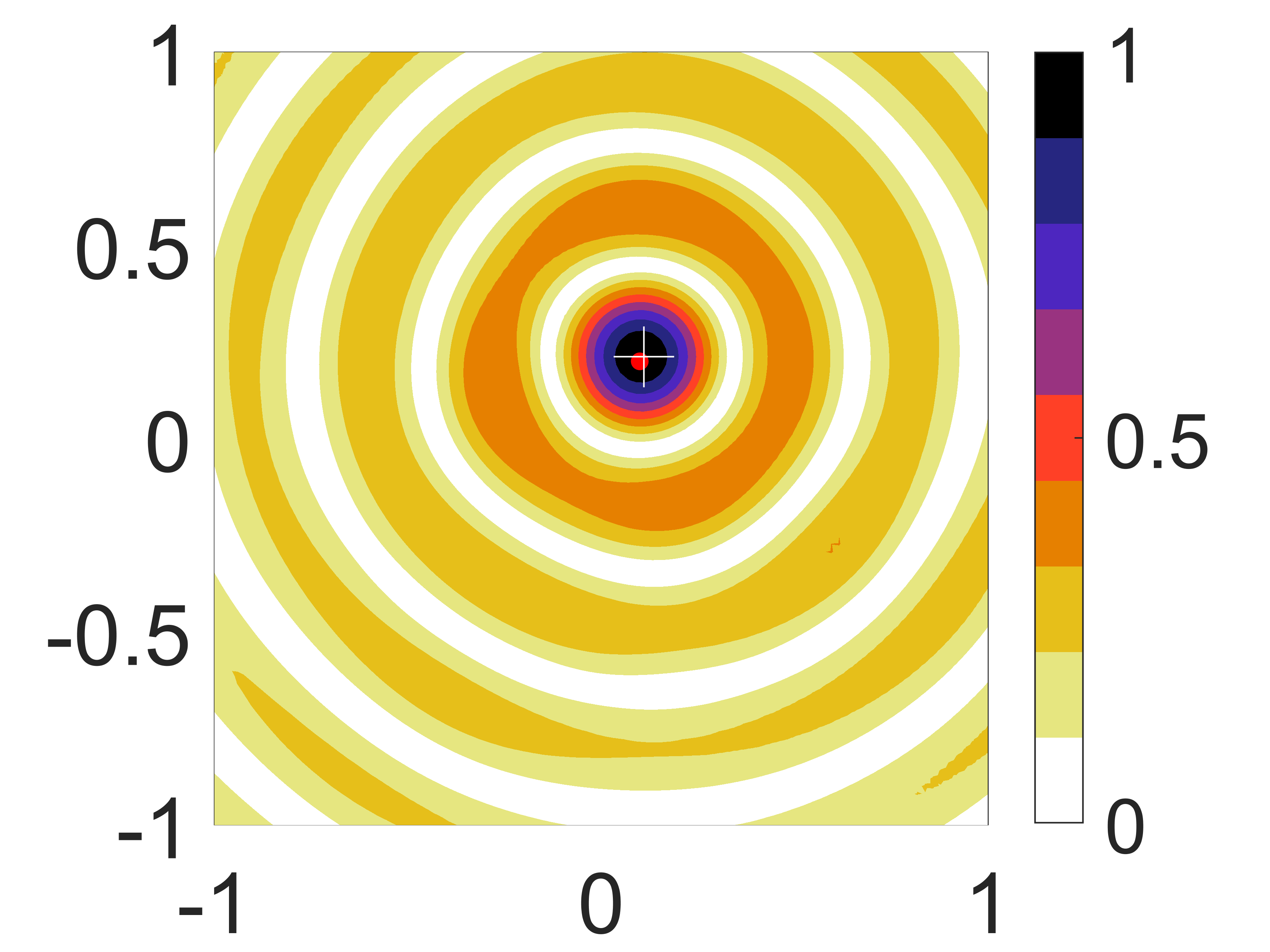}}
		\subfigure[]{\includegraphics[width=0.3\textwidth]{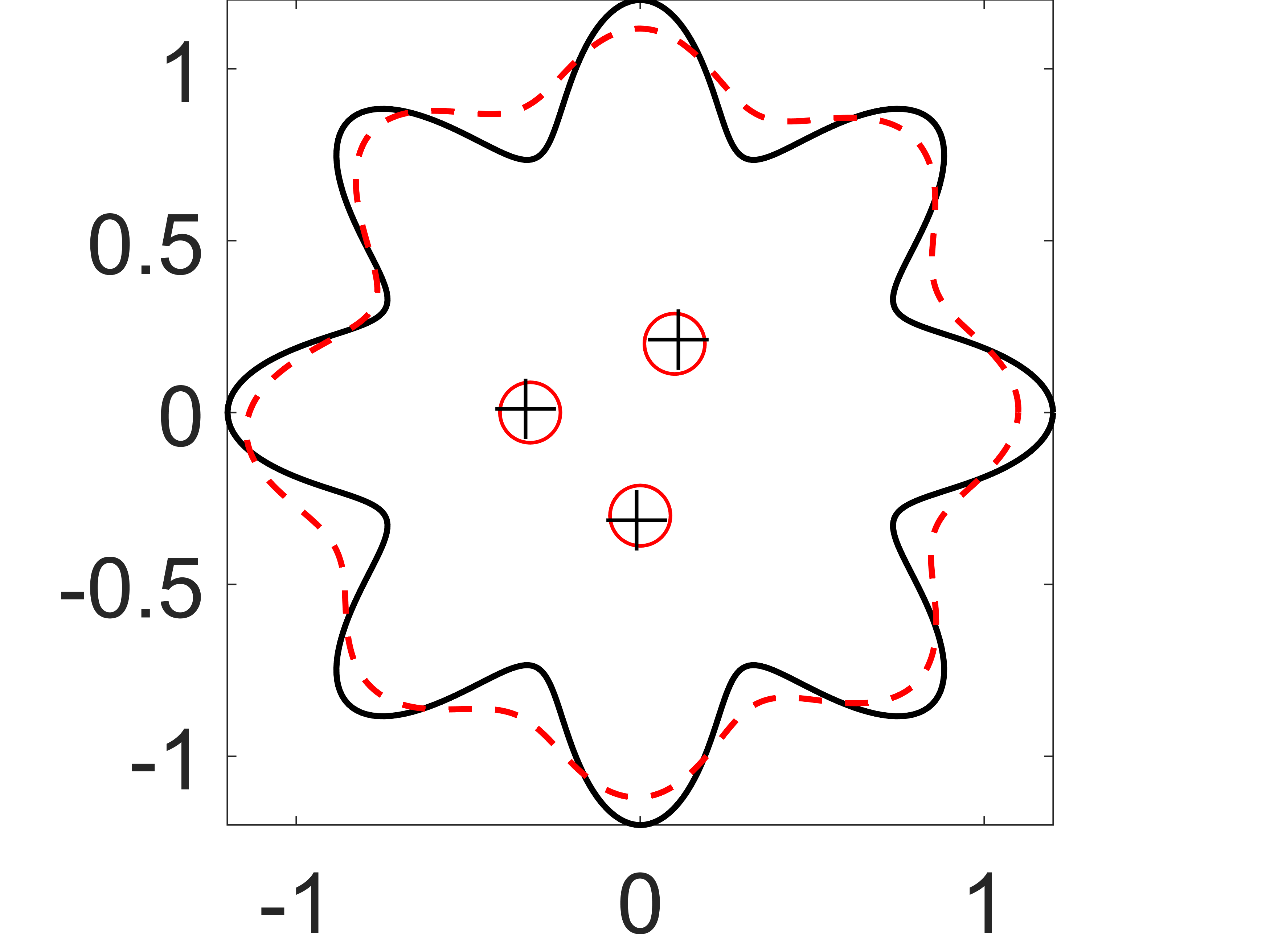}}
		\vskip -0.3cm
		\caption{Reconstruction of the $n$-leaf $(n=4,5,8)$ cavities and the corresponding sources. The first column: Geometry setting; The second column: Images of $I_1(y);$ The third column: Reconstructions.} \label{fig:starfish1}
\end{figure}

\begin{example}
In the second example, we consider the reconstructions of point sources and a kite-shaped cavity parameterized by
\[
		x_K(t)=(\cos t+0.65\cos 2t - 0.65, 1.5\sin t),\quad 0\le t\le 2\pi.
\]
The radii of the measurement circles are taken to be 0.6 and 0.8. The radii of auxiliary circles are 0.5 and 2.2.
		
We first consider the case with $10\%$ noise and 5 point sources. The source points are equally distributed on a circle centered at the origin with radius 0.3. The reconstructions with four different wavenumbers are shown in \cref{fig:kite}. One easily observes from \cref{fig:kite} that our method fails to recover the far-ends of the two wings if the wavenumber is relatively small or large, while the remaining portion could always be well reconstructed.  In fact, this imperfection is typical and reasonable for inverse cavity scattering problems. On one hand, the low-frequency waves generally do not have the capability of resolving fine details. On the other hand, the point sources are clustered around the center of cavity and thus the far-ends are usually  illuminated inadequately because the energy of the wave decays proportionally to the frequency and distance. Similar effects and interpretations can be found in other algorithms for inverse cavity problems, see, e.g., \cite{Guo1}.
	\end{example}

    \begin{figure}
    	\centering	
    	\subfigure[]{\includegraphics[width=0.24\textwidth]{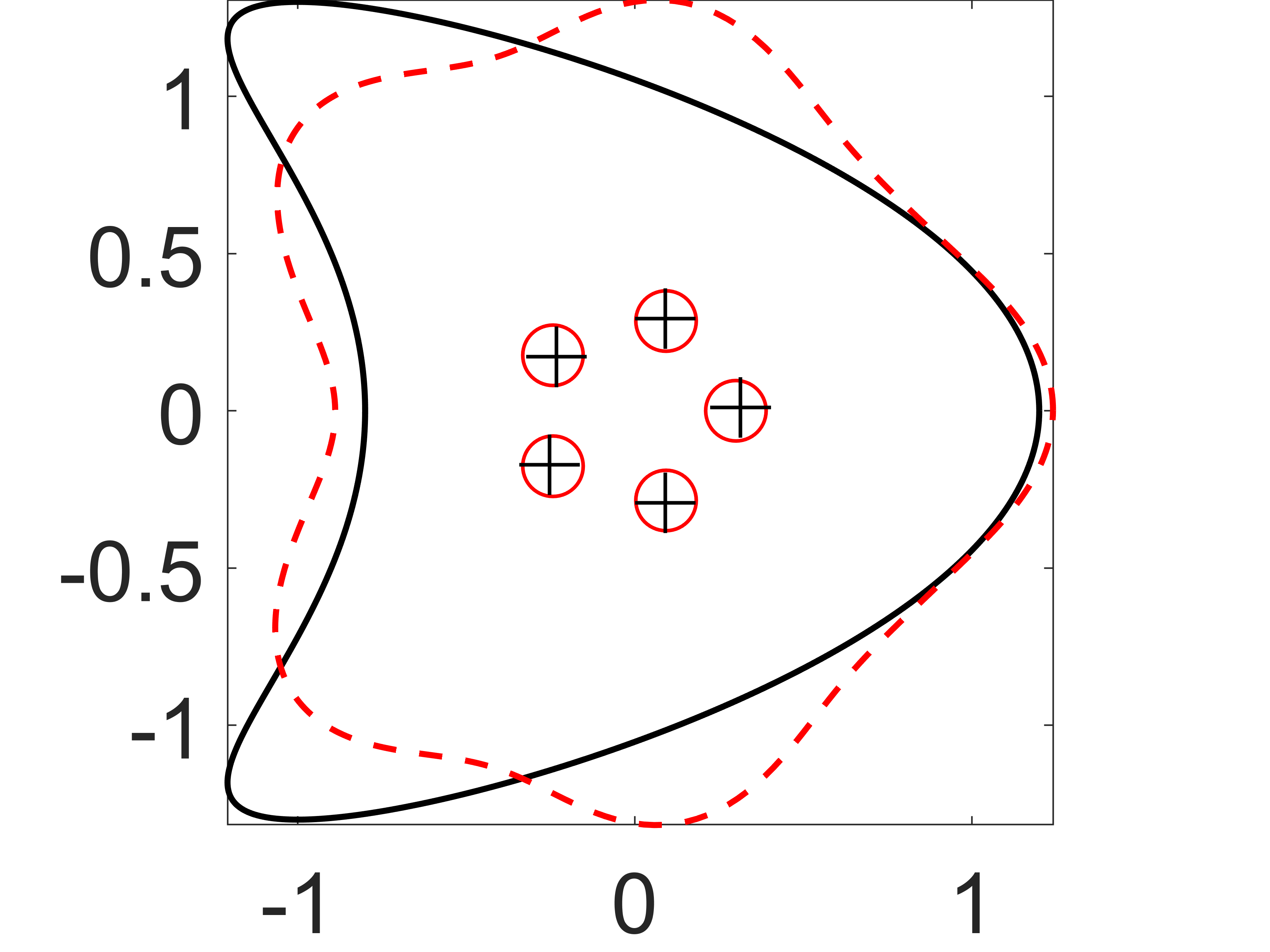}}
    	\subfigure[]{\includegraphics[width=0.24\textwidth]{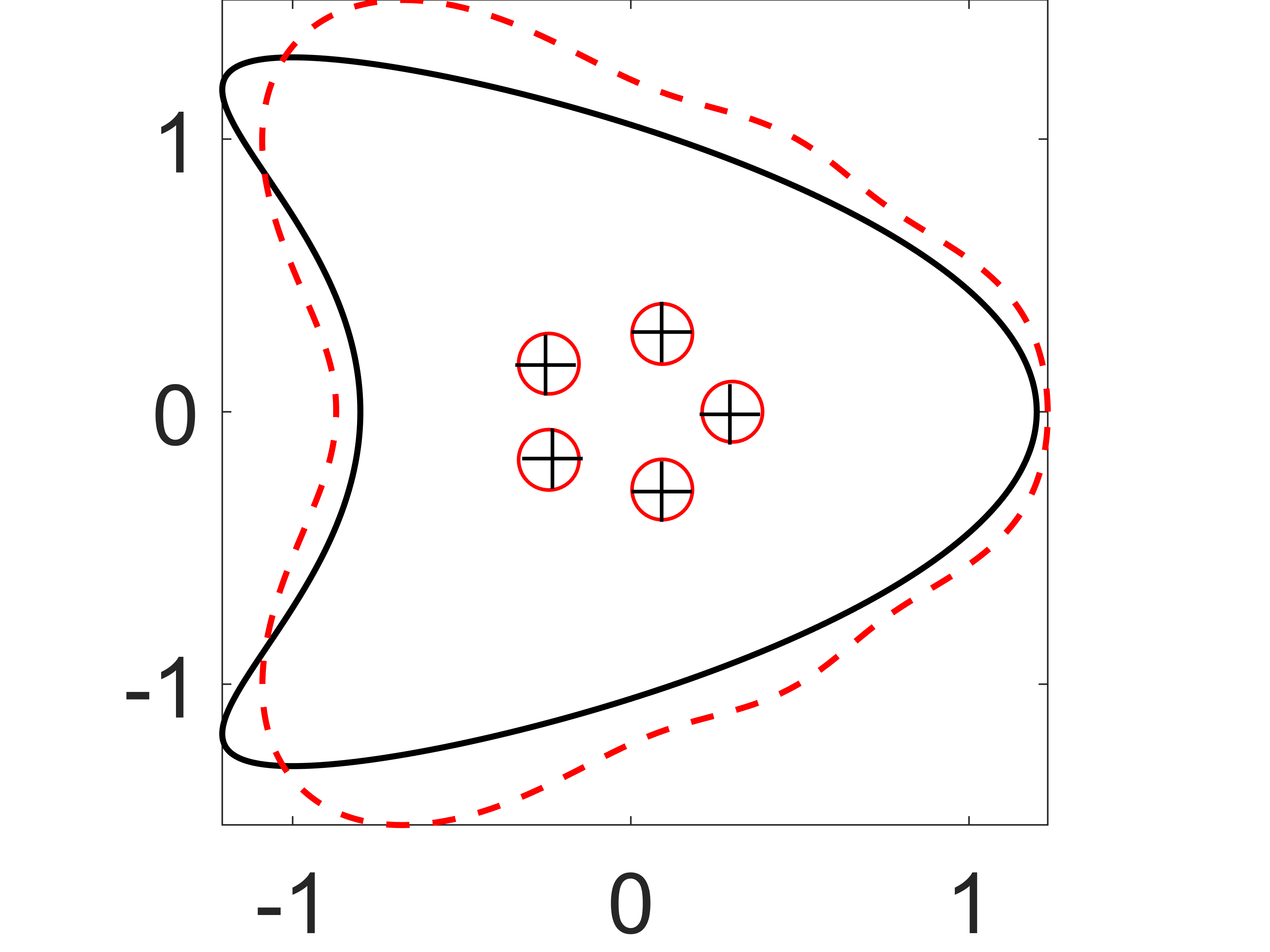}}
    	\subfigure[]{\includegraphics[width=0.24\textwidth]{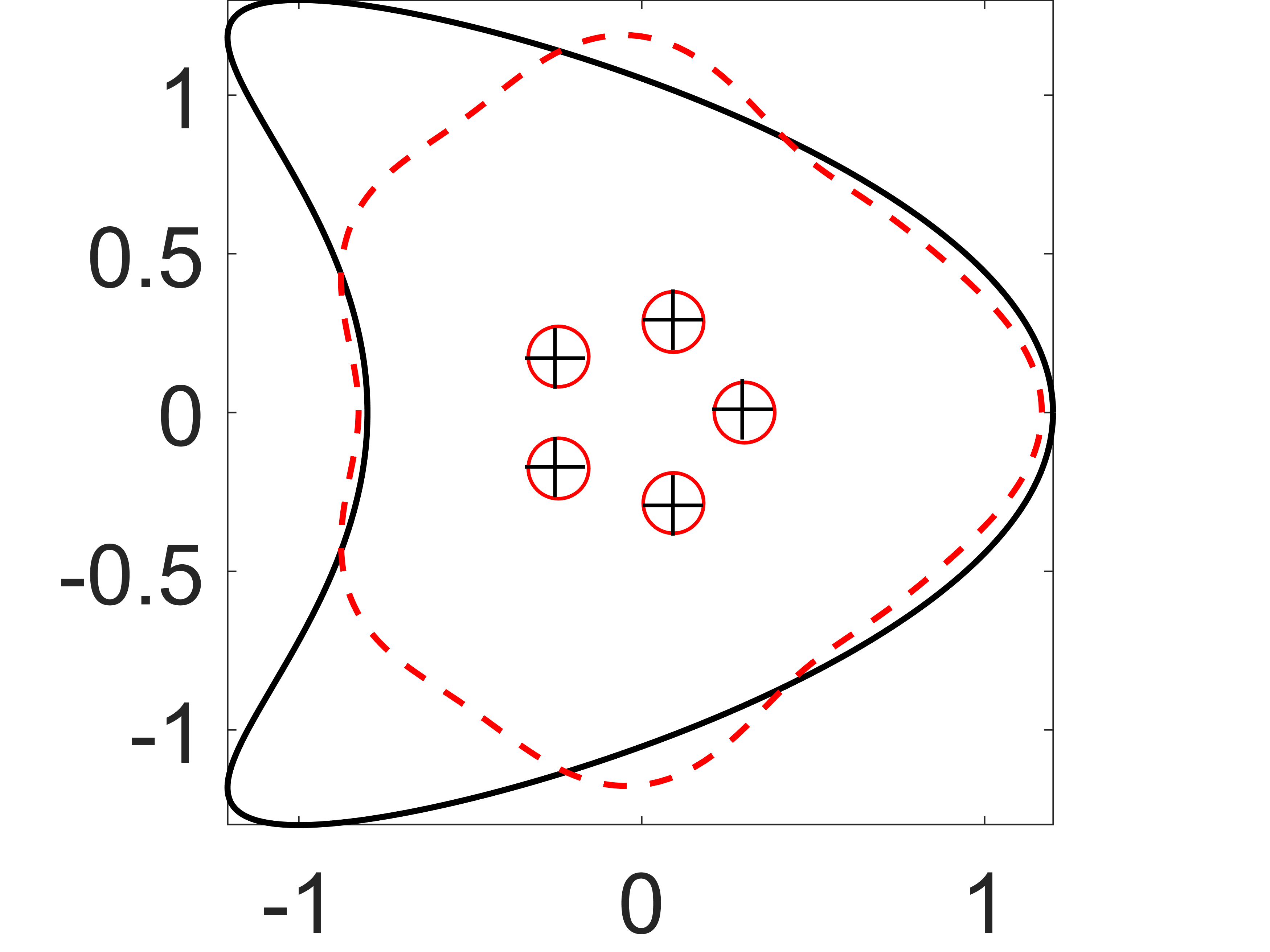}}
    	\subfigure[]{\includegraphics[width=0.24\textwidth]{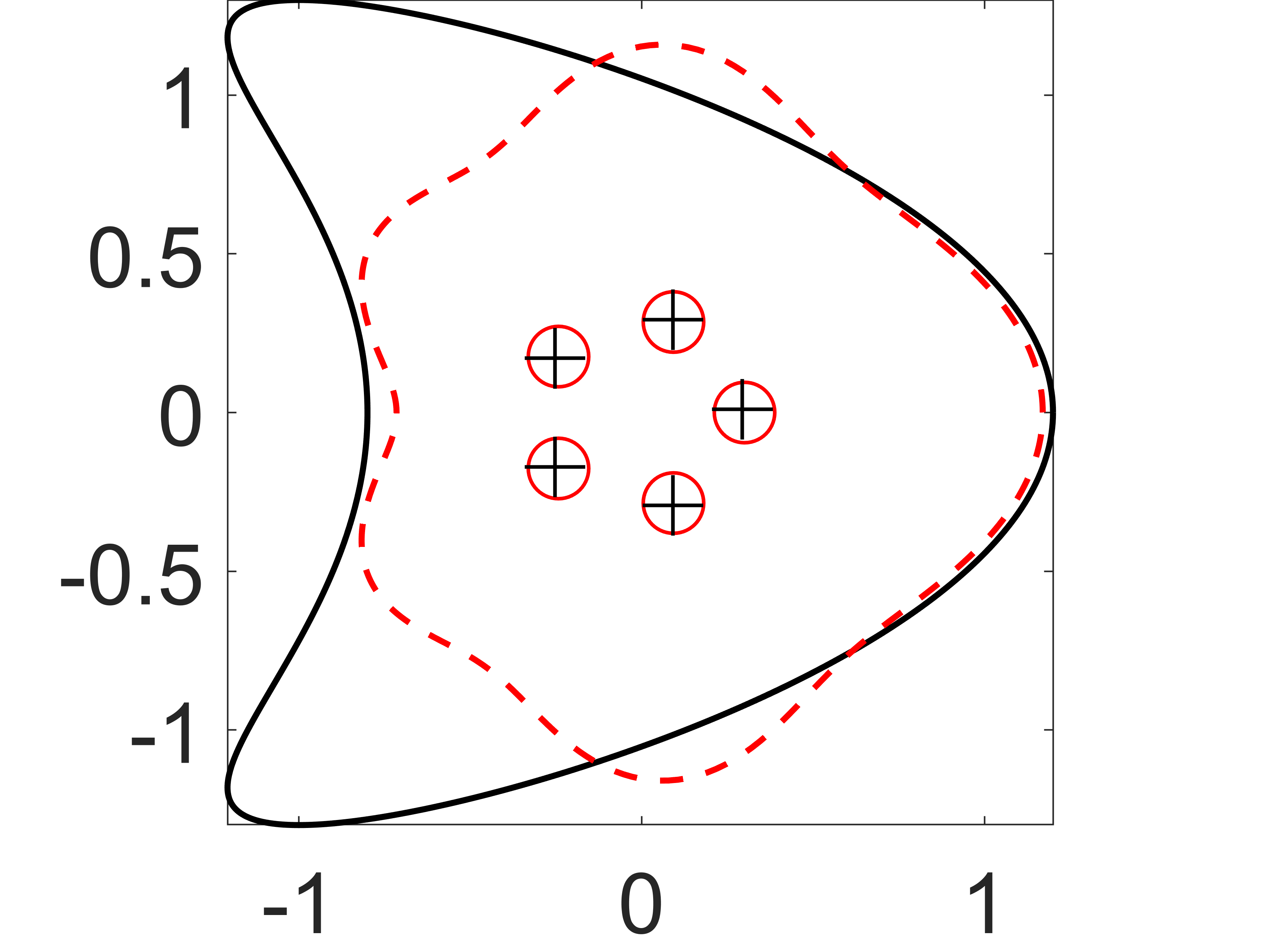}}
    	\caption{Reconstruction of the kite-shaped cavity and source points with different wavenumbers (from left to right: $k=2, 4, 6, 10$).}
    	\label{fig:kite}
    \end{figure}

Next, we fix the wavenumber to be $k=4$ and investigate the reconstruction with respect to the number and distribution of   source points.  The corresponding reconstructions are depicted in \cref{fig:kite1}, which illustrates that the distribution of the source points has an influence on the reconstruction of the cavity. In other words, the two components in the co-inversion problem, namely the inverse source problem and inverse scattering problem, are closely interlinked.

   \begin{figure}
	    \centering	
	    \subfigure[]{\includegraphics[width=0.3\textwidth]{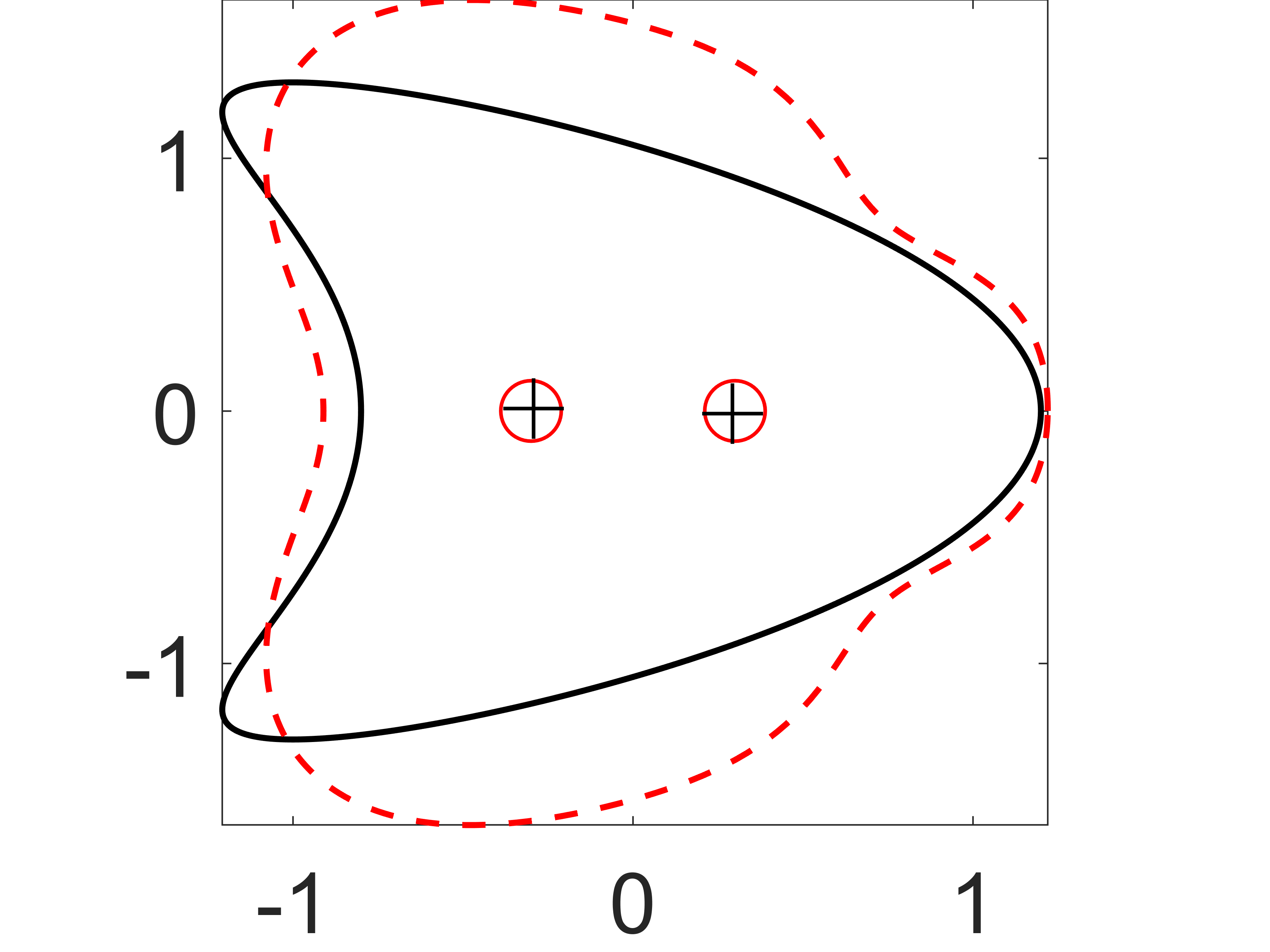}}
	    \subfigure[]{\includegraphics[width=0.3\textwidth]{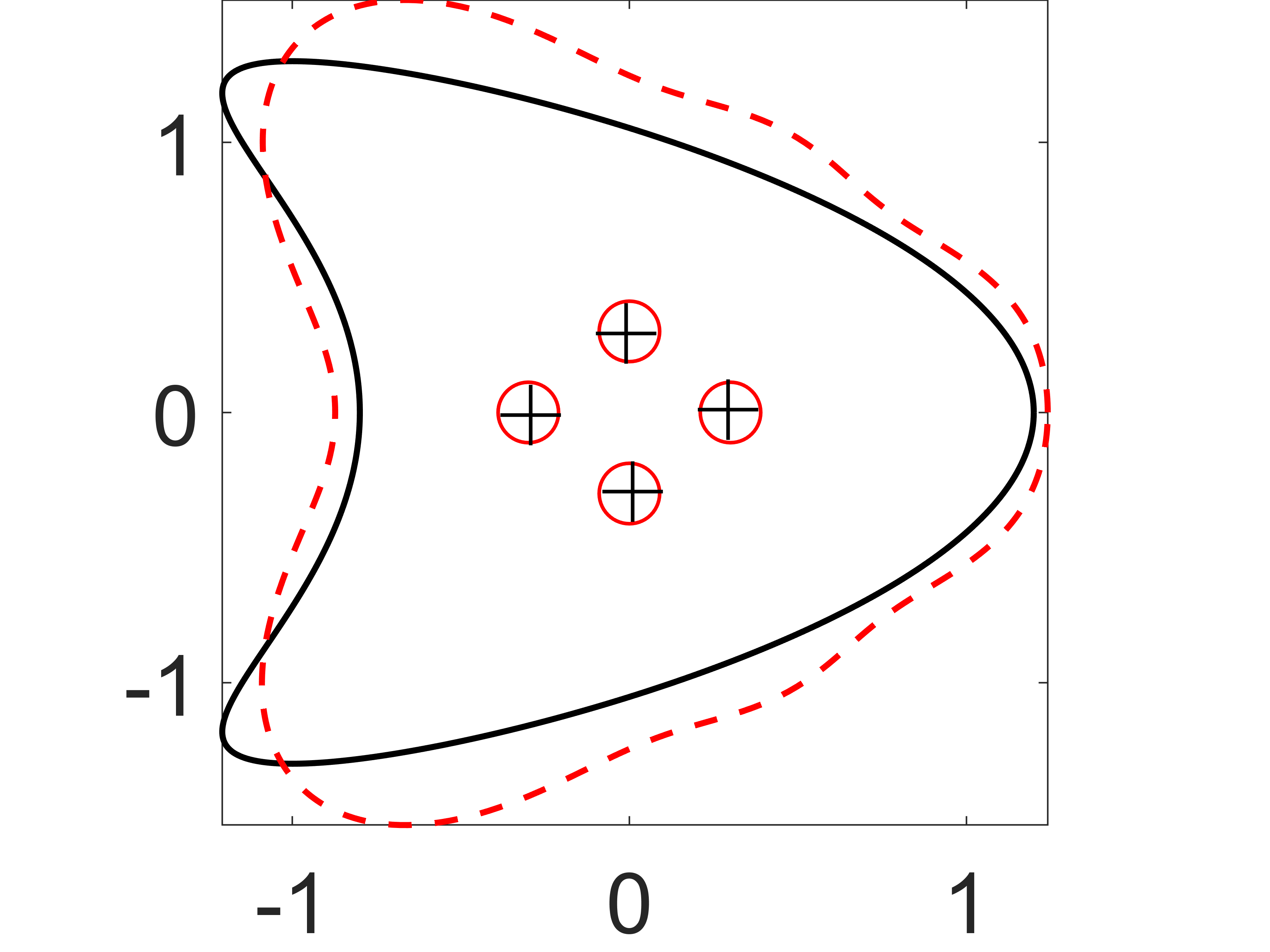}}
	    \subfigure[]{\includegraphics[width=0.3\textwidth]{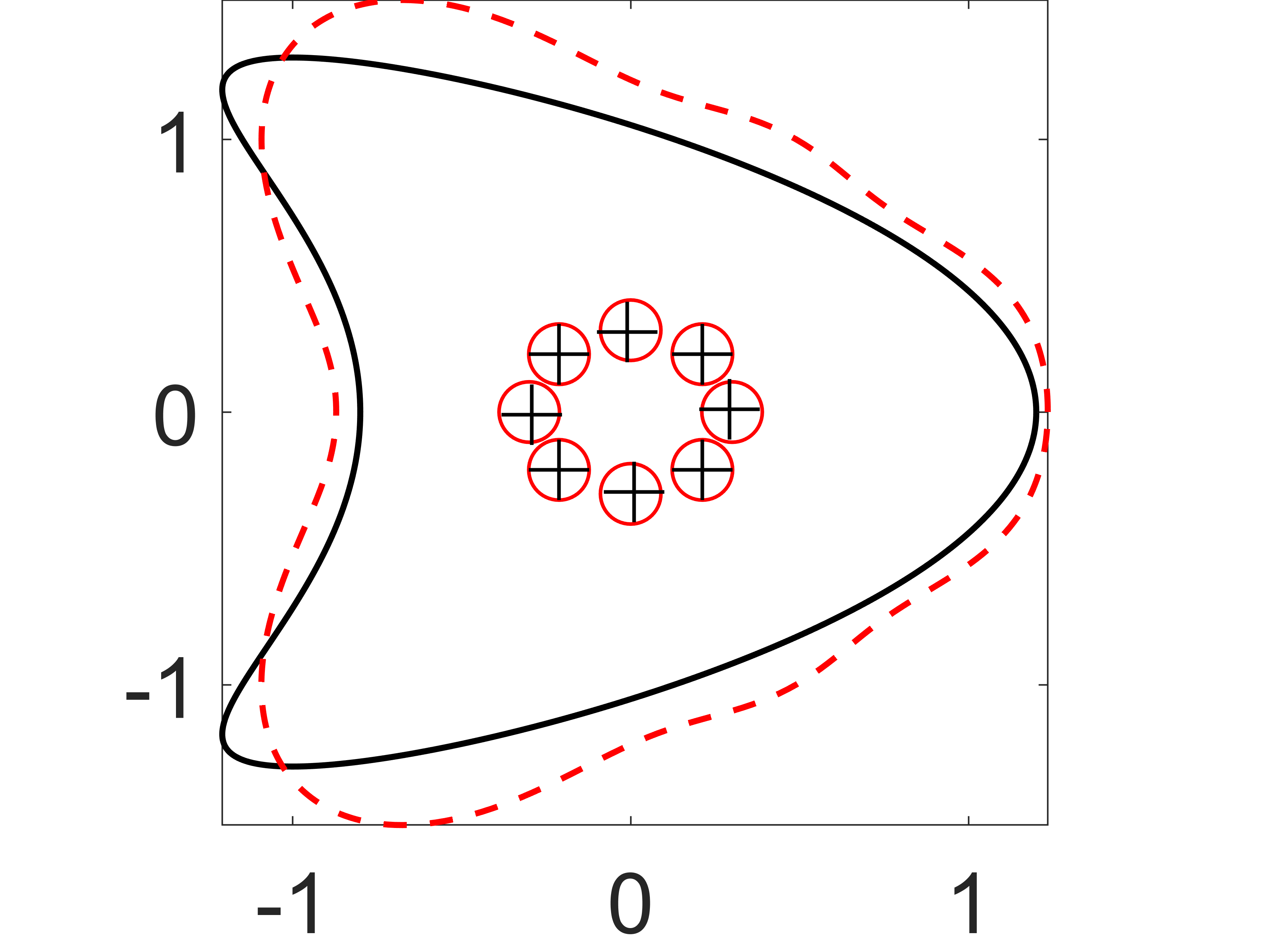}}
	    \subfigure[]{\includegraphics[width=0.3\textwidth]{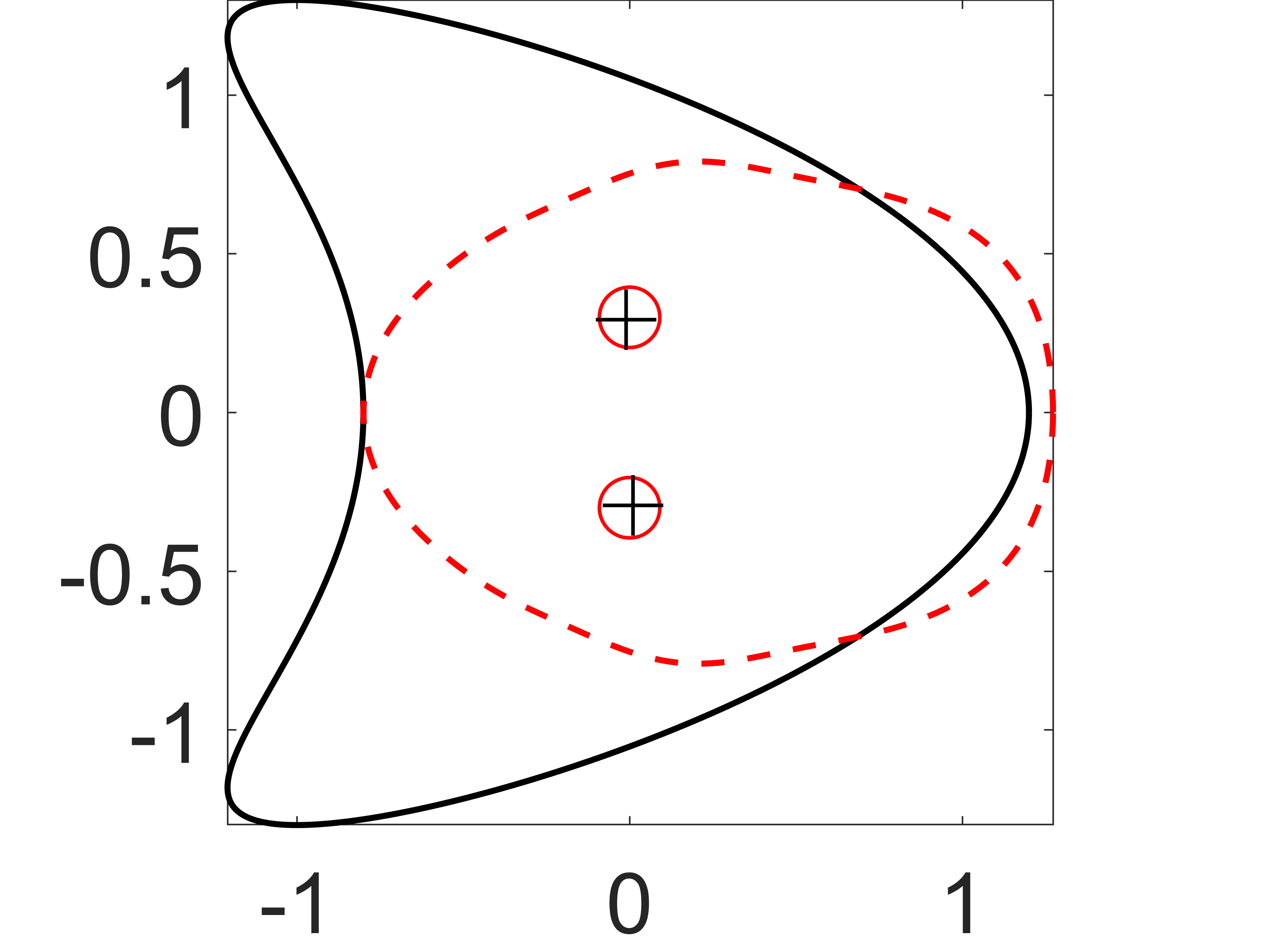}}
	    \subfigure[]{\includegraphics[width=0.3\textwidth]{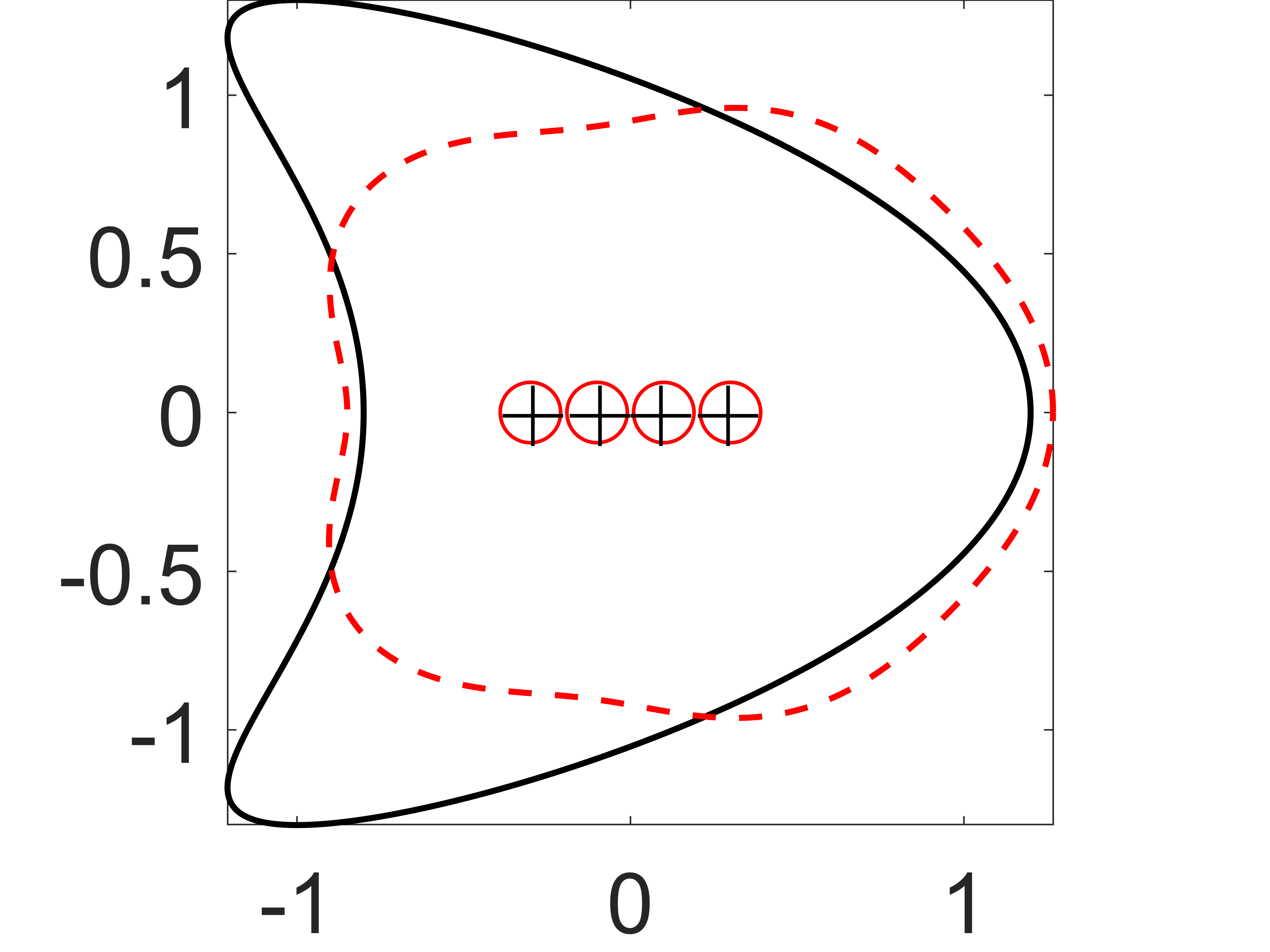}}
	    \subfigure[]{\includegraphics[width=0.3\textwidth]{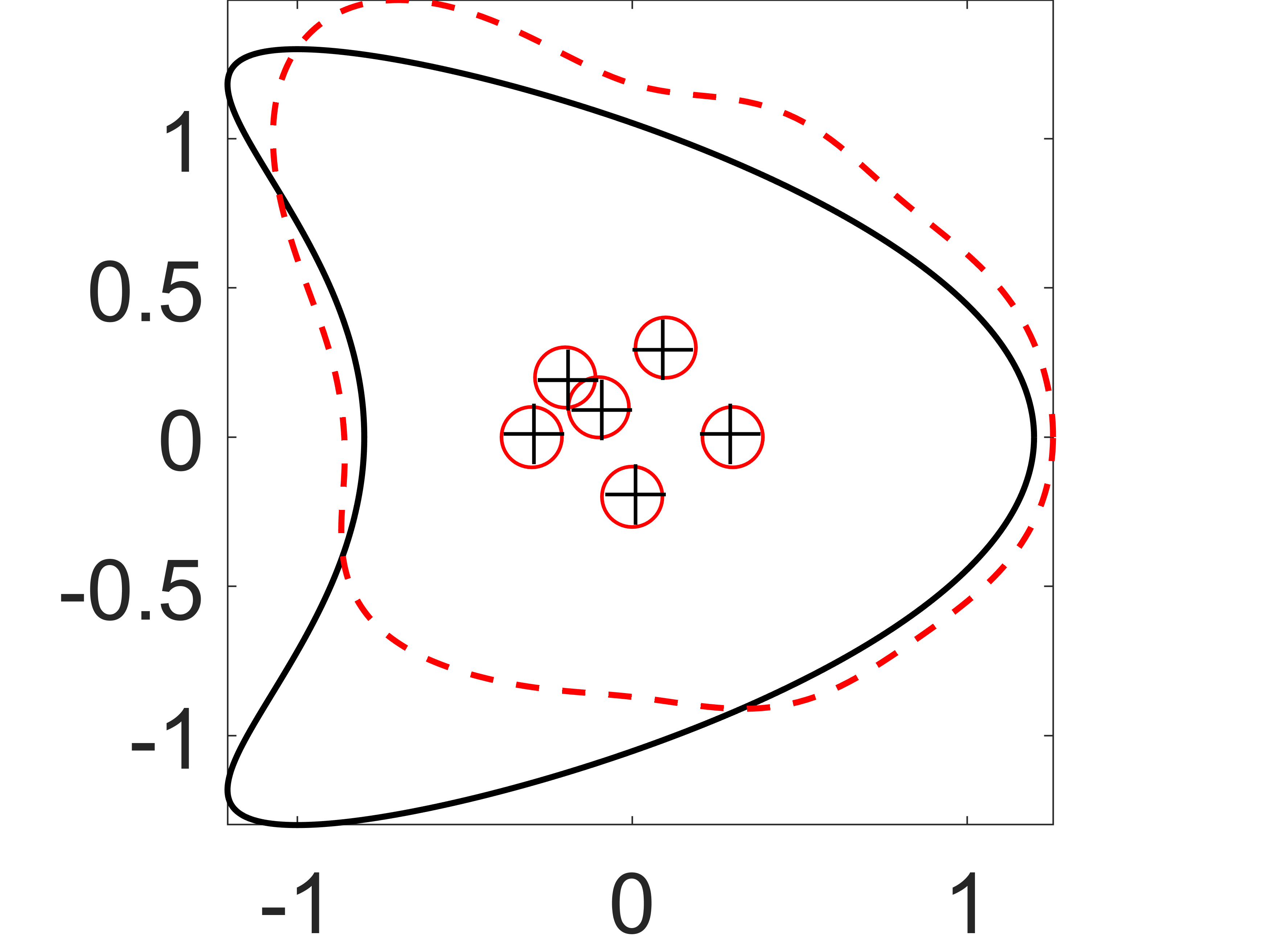}}
	    \caption{Reconstruction of the kite and different number and distribution of source points.}
	    \label{fig:kite1}
   \end{figure}

\begin{example}
	In the last example, we consider the simultaneous reconstruction of the source points together with non-symmetric cavities. The boundary curves of the cavities are described in the form of
	\begin{equation*}
			x(t)=r(t)(\cos t,\sin t),\quad t\in[0, 2\pi),
    \end{equation*}
    where the radial function $r(t)$ is randomly generated by the following procedure:
	    \begin{itemize}
            \item Introduce a set of equidistant knots in $[0,2\pi]$ by $T_t=2\pi t/n_T,\,t=0,1,\cdots,n_T-1$ with $n_T$ the number of knots chosen randomly in the integer set $\{8,9,\cdots,16\};$
	    	\item  For each $T_t$, take the corresponding radial gird knot $r(T_t)$ to be a uniform distribution on interval $[0.4, 1.6]$, that is,  $r(T_t)\sim\mathcal{U}[0.4, 1.6]$. In this setting, the random curves are produced in the annular domain with inner radius 0.4 and outer radius 1.6.
	    	\item Generate the radial function $r(t)$ by the cubic spline interpolation to the given data $(T_t, r(T_t)),$ $t=0, 1,\cdots,n_T-1$; We also impose the periodic condition $r(T_0)=r(T_{n_T})$ to guarantee that the star-like curves are closed;
	    	\item  Take 128 equidistant gird points on $[0, 2\pi],$ with $\theta_i=2\pi i/128,i=0,\cdots,127,$ and the points $(\theta_i, r(\theta_i))$ formulate our boundary curve, then the random shapes are generated.
    	\end{itemize}

    We refer to \cref{fig:illustrate} for an illustration of three boundary curves generated by such procedure, where the red small points denote the $n_T$ interpolation knots. The black solid curves are the boundaries generated by the cubic spline interpolation. The blue circles with radii 0.4 and 1.6 bound the region containing the random curves.

    \begin{figure}
    	\centering	
    	\subfigure[]{\includegraphics[width=0.3\textwidth]{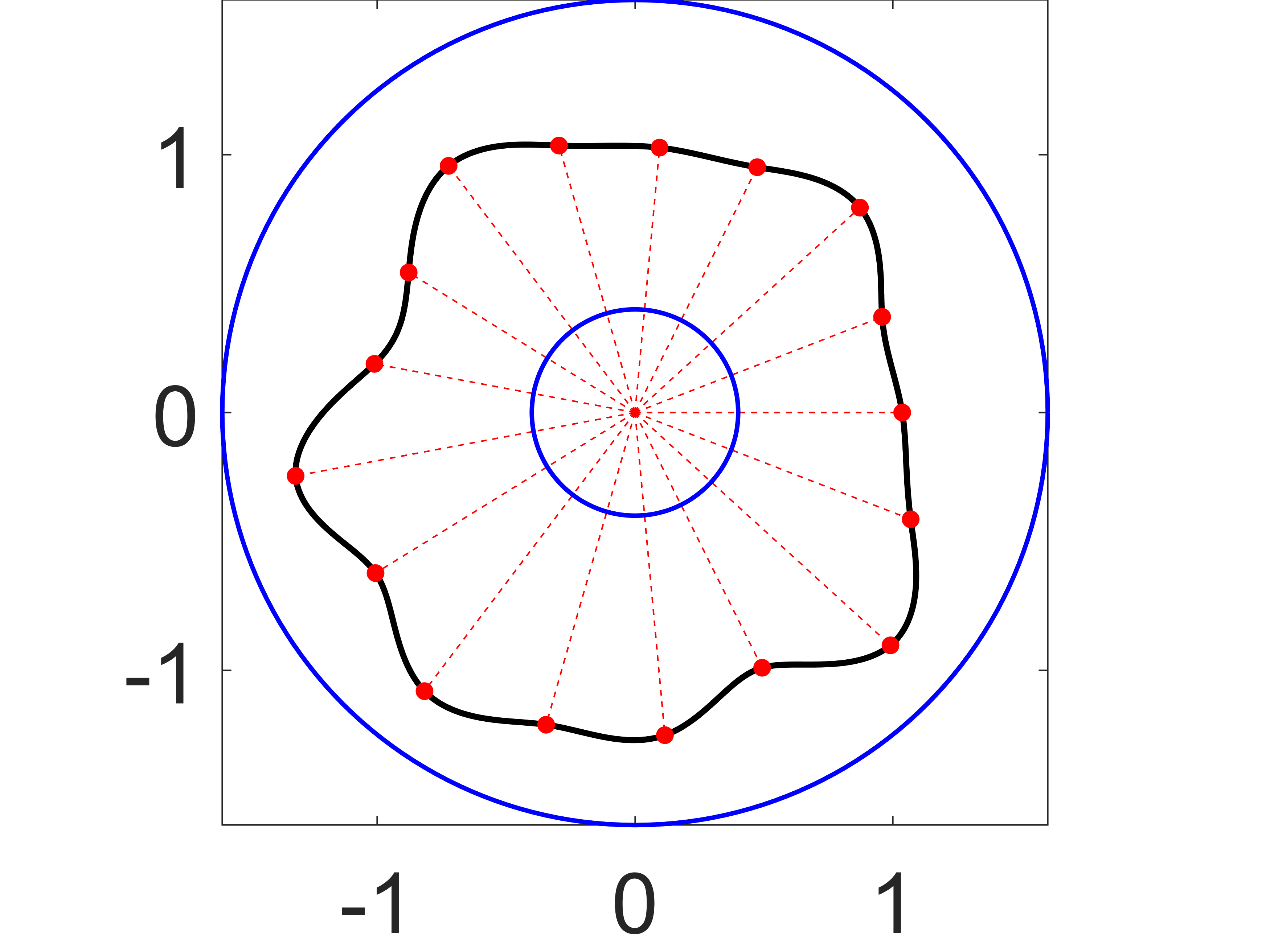}}
    	\subfigure[]{\includegraphics[width=0.3\textwidth]{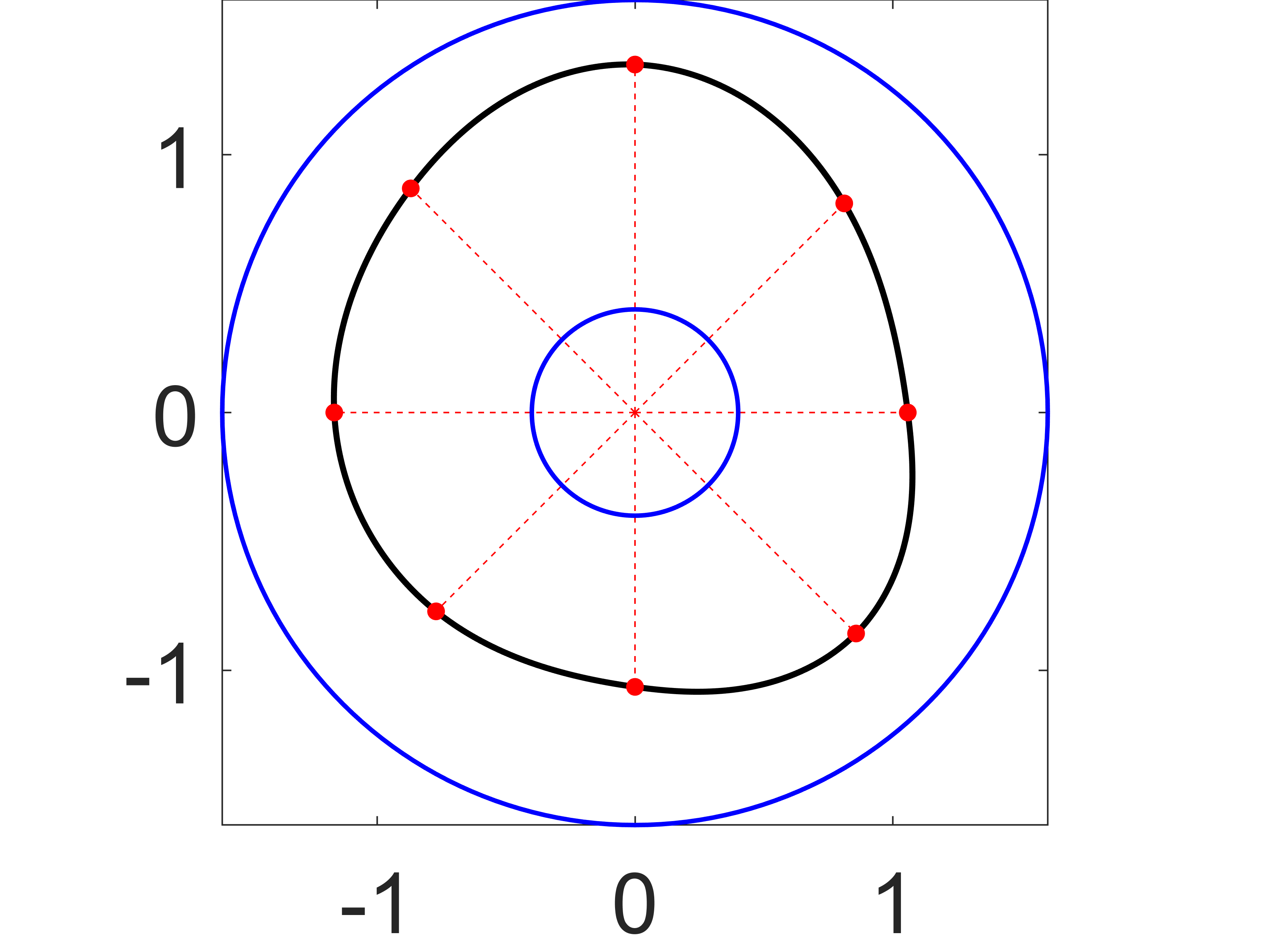}}
    	\subfigure[]{\includegraphics[width=0.3\textwidth]{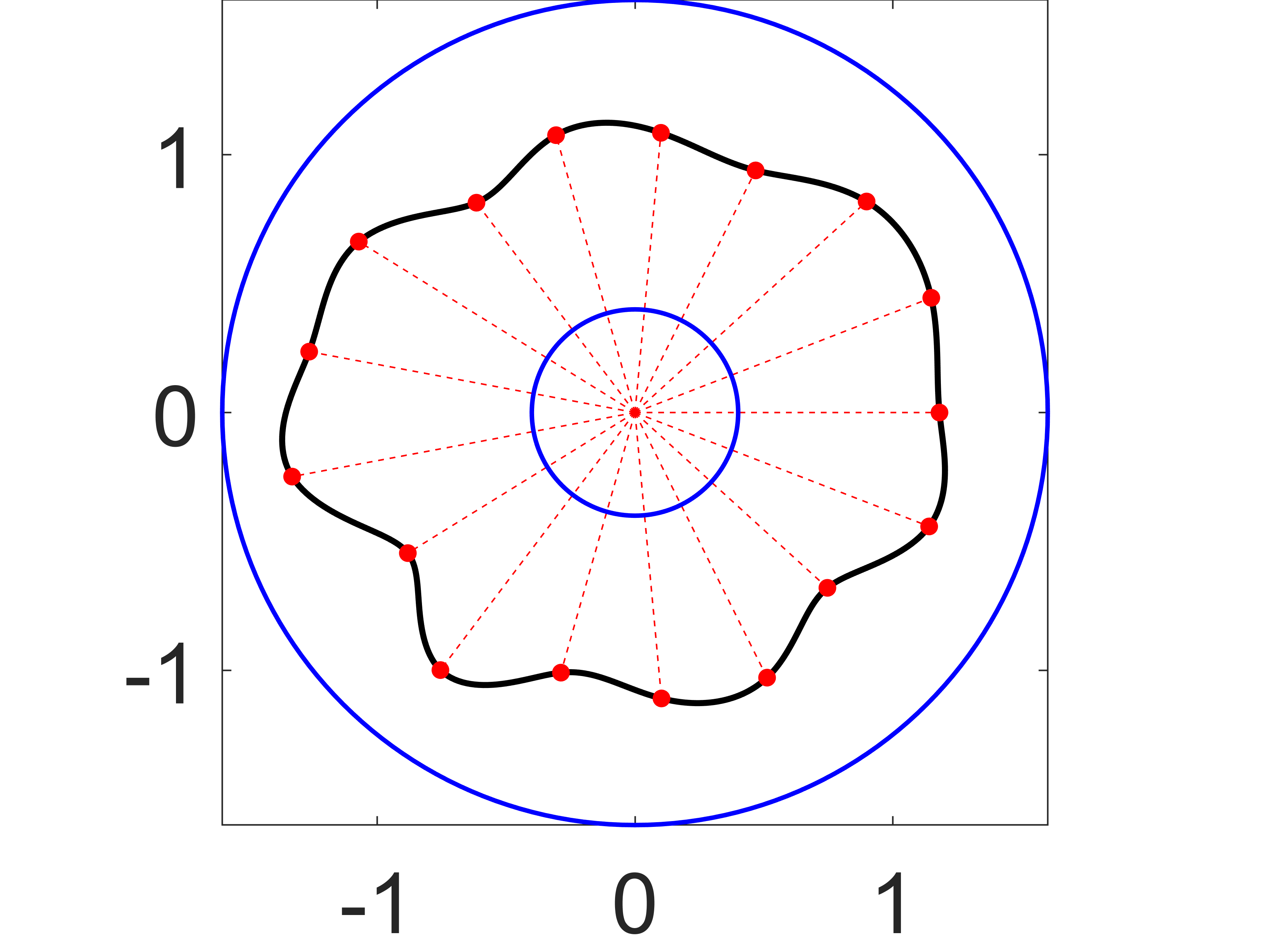}}
    	\caption{An illustration of several random shapes.}
    	\label{fig:illustrate}
    \end{figure}

In this example, the wavenumber $k=4$ and noise level $\delta=10\%$ are used.  The measurements are taken on the concentric circles with radii 0.5 and 0.8. The radii of auxiliary circles are set to be 0.4 and 1.5.

In \cref{fig:random}, we illustrate the reconstructions with point sources located at $(0.1,0.2), (-0.3,0)$ and $(0,-0.32)$. We also consider the cases with more point sources in \cref{fig:random2}. Further, we list the errors of the cavity reconstructions in \cref{tab:error_random} and a comparison of source reconstruction in \cref{tab:location}.
All these numerical results clearly demonstrate that the inversion algorithm performs well in identifying the locations of the point sources. Meanwhile, the randomly generated cavities can be also satisfactorily reconstructed by our method. Moreover, it can be observed that the proposed two-phase (decoupling-imaging) scheme is robust in the sense that the performance is insensitive to the initial guess of  the cavity and the number of point sources.

	\end{example}
	
	\begin{figure}
		\centering	
		\subfigure[]{\includegraphics[width=0.3\textwidth]{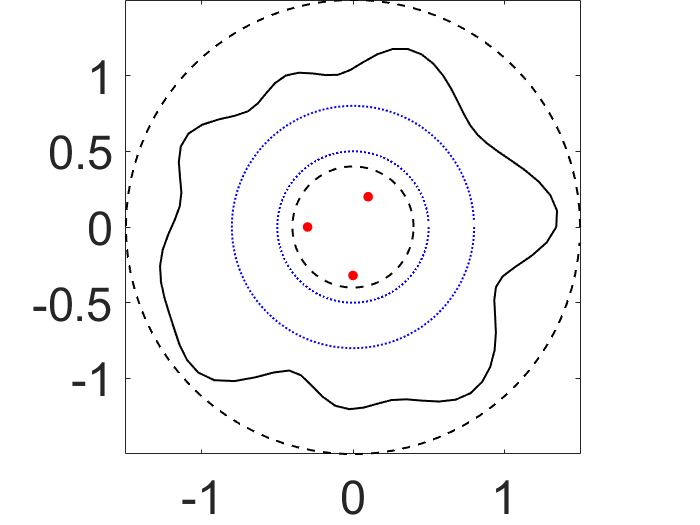}}
		\subfigure[]{\includegraphics[width=0.3\textwidth]{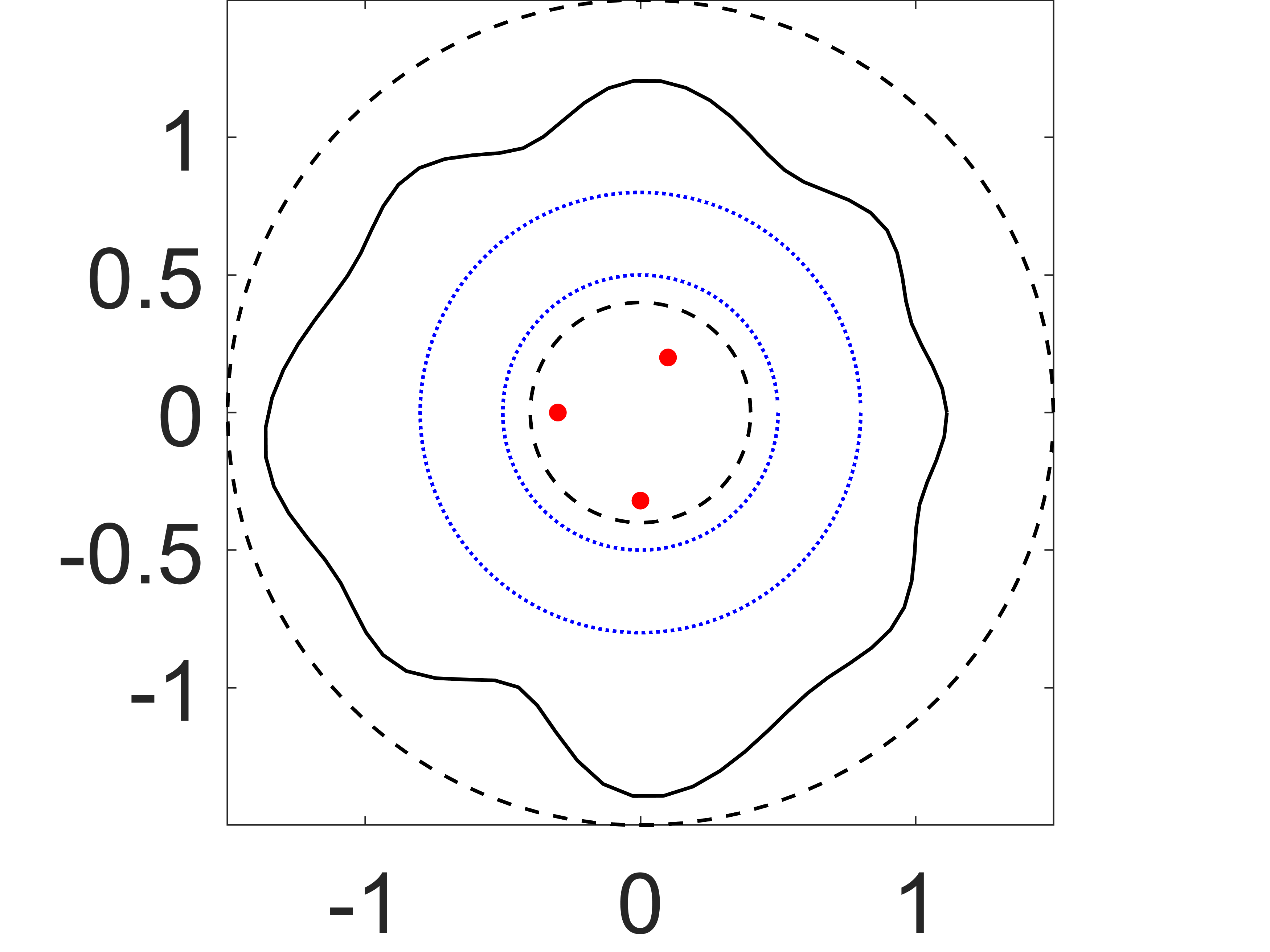}}
		\subfigure[]{\includegraphics[width=0.3\textwidth]{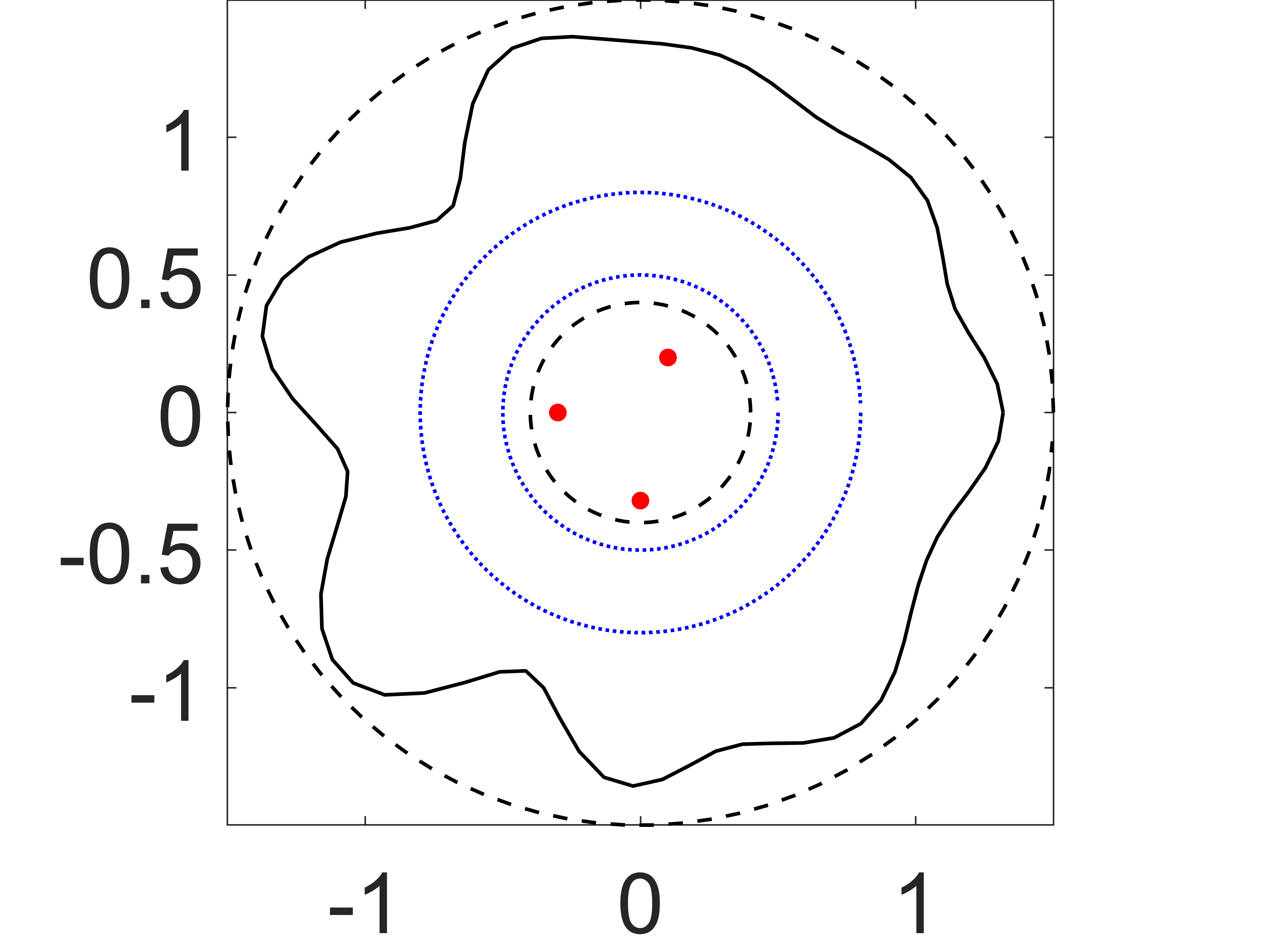}}
		\subfigure[]{\includegraphics[width=0.3\textwidth]{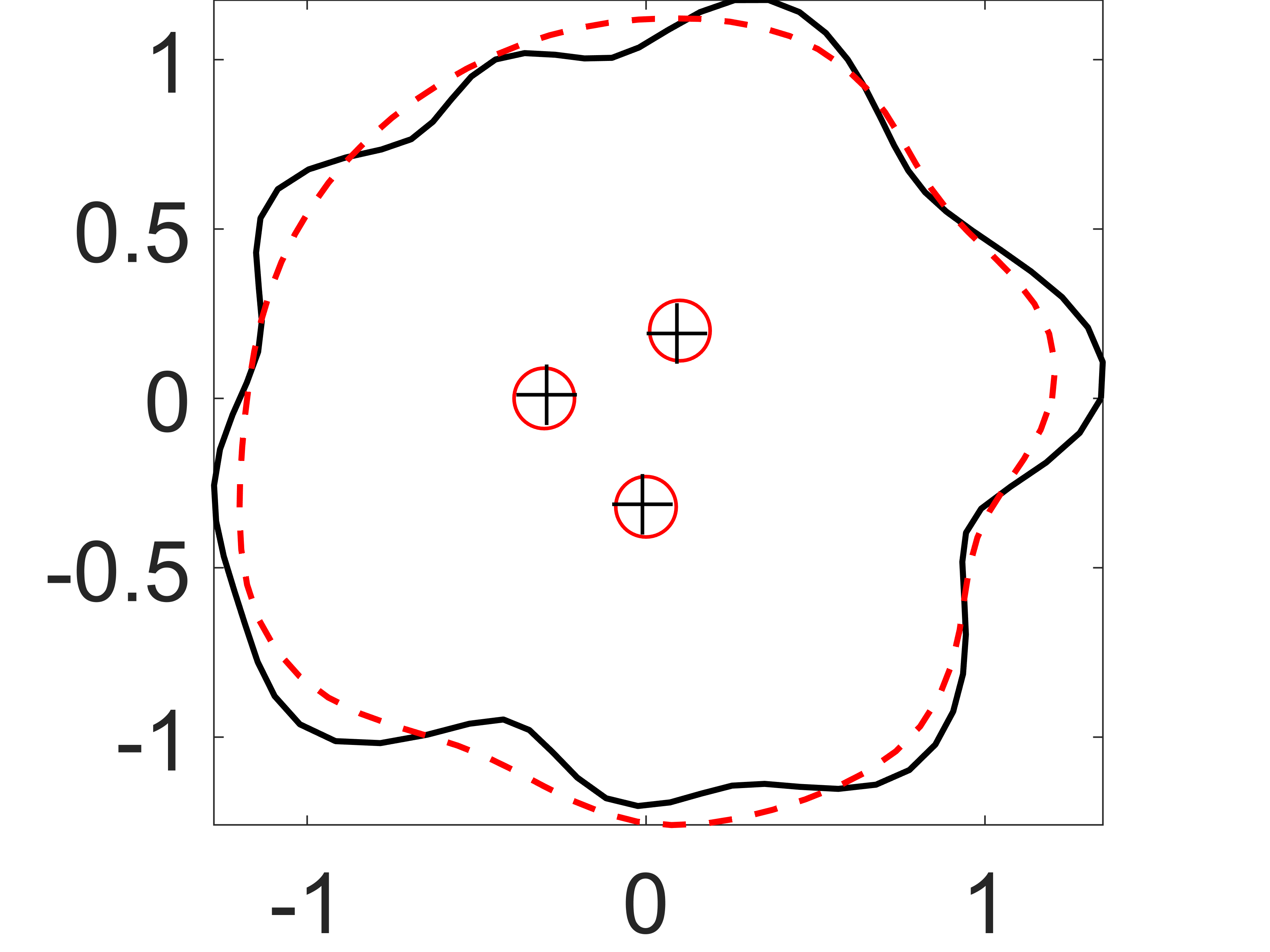}}
		\subfigure[]{\includegraphics[width=0.3\textwidth]{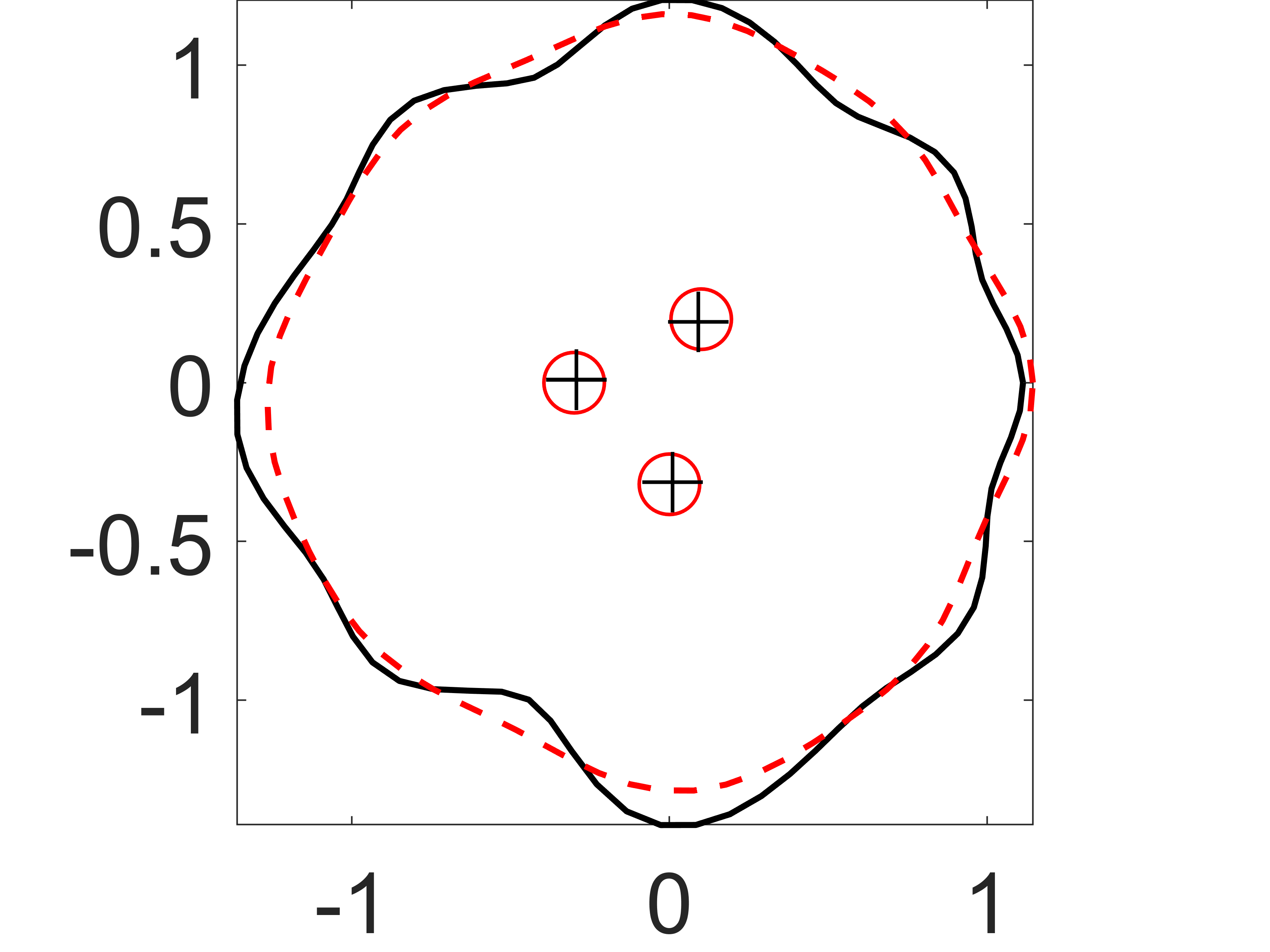}}
		\subfigure[]{\includegraphics[width=0.3\textwidth]{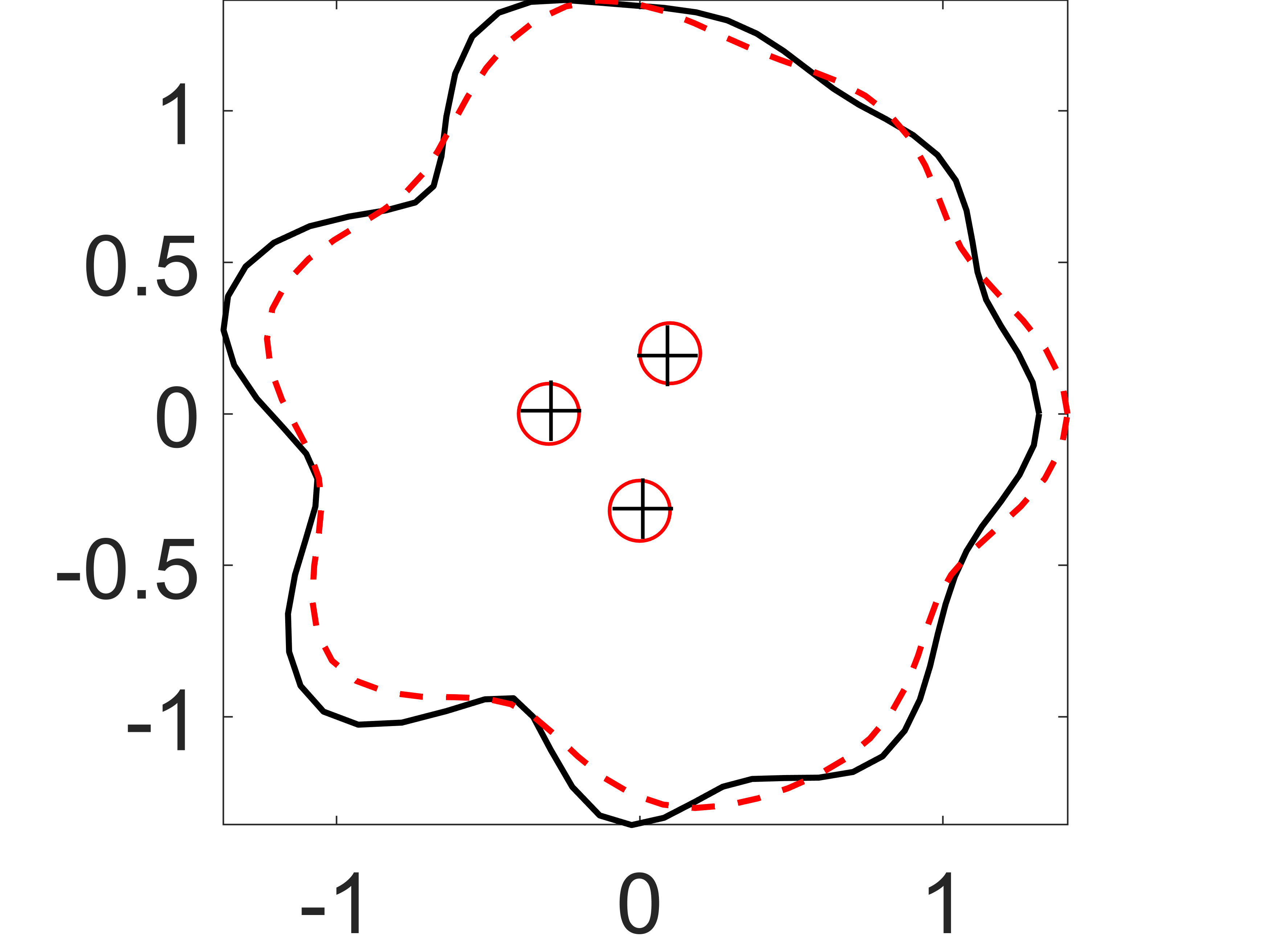}}
		\caption{Reconstruction of the non-symmetric cavities and three fixed source points. Row 1: configurations; Row 2: reconstructions.}
		\label{fig:random}
	\end{figure}

	\begin{figure}
	    \centering	
	    %\subfigure[]{\includegraphics[width=0.3\textwidth]{}}
	    \subfigure[]{\includegraphics[width=0.3\textwidth]{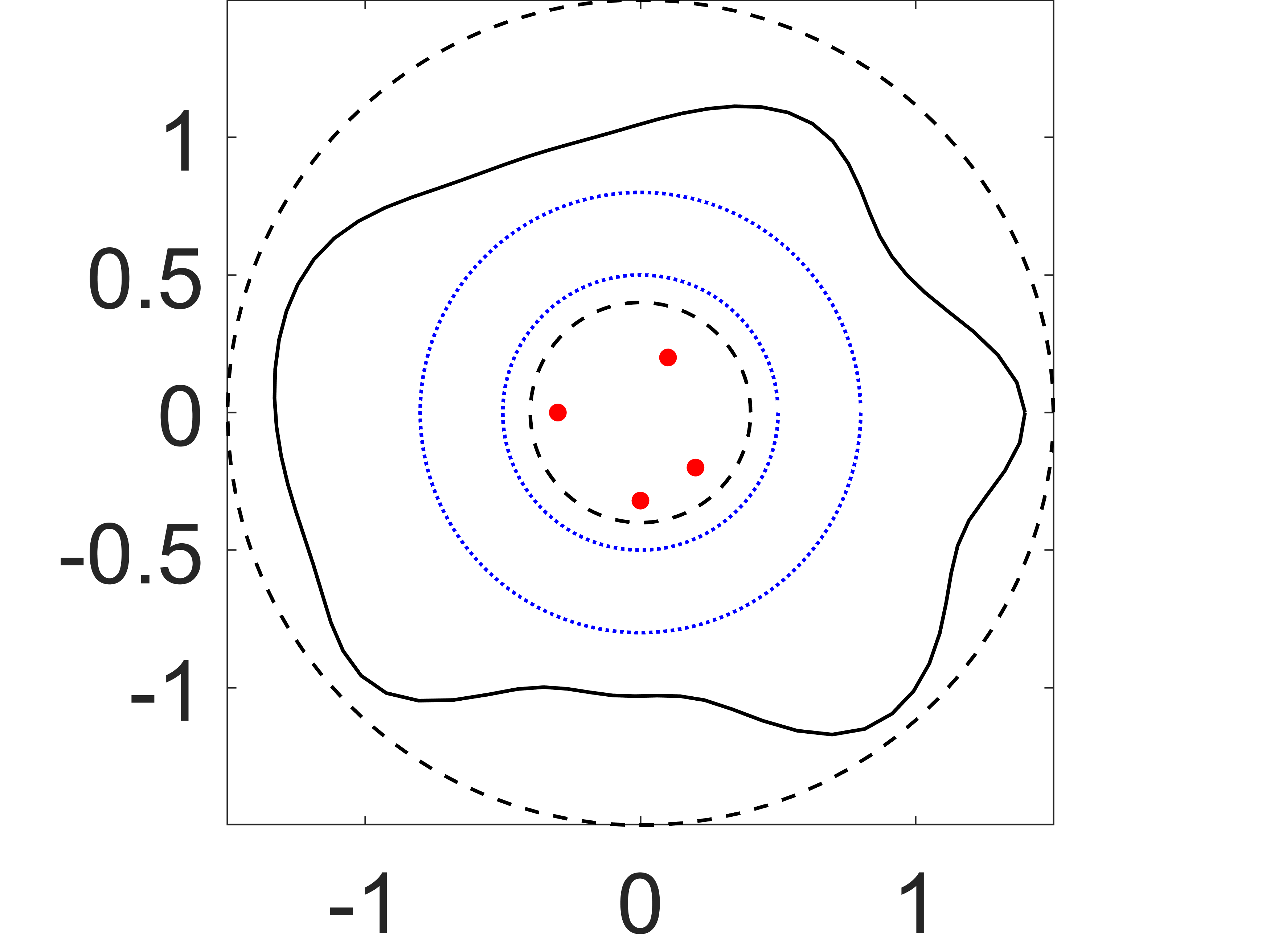}}
	    \subfigure[]{\includegraphics[width=0.3\textwidth]{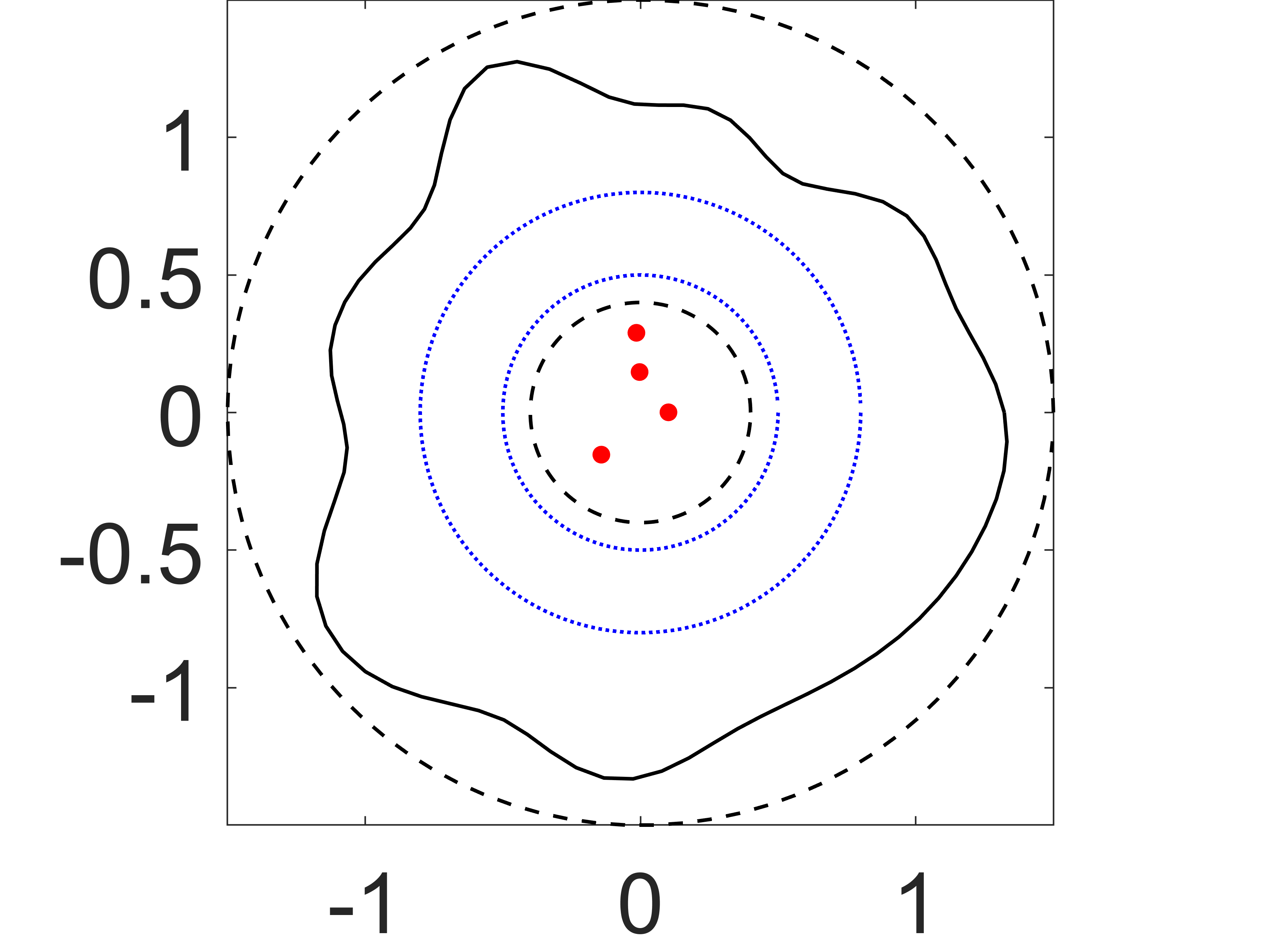}}
	    \subfigure[]{\includegraphics[width=0.3\textwidth]{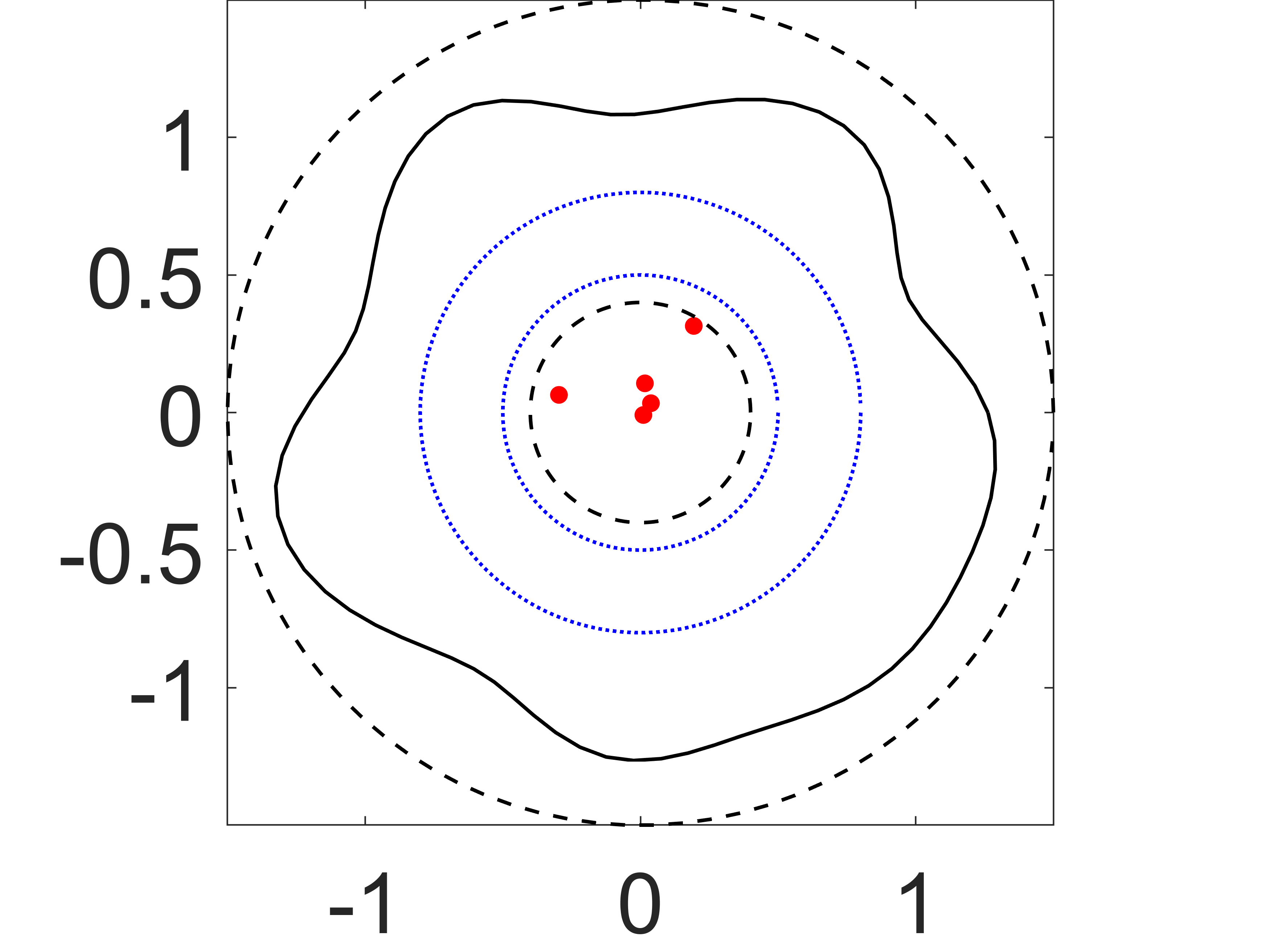}}
	    %\subfigure[]{\includegraphics[width=0.3\textwidth]{shape3}}
	   % \subfigure[]{\includegraphics[width=0.3\textwidth]{}}
	    \subfigure[]{\includegraphics[width=0.3\textwidth]{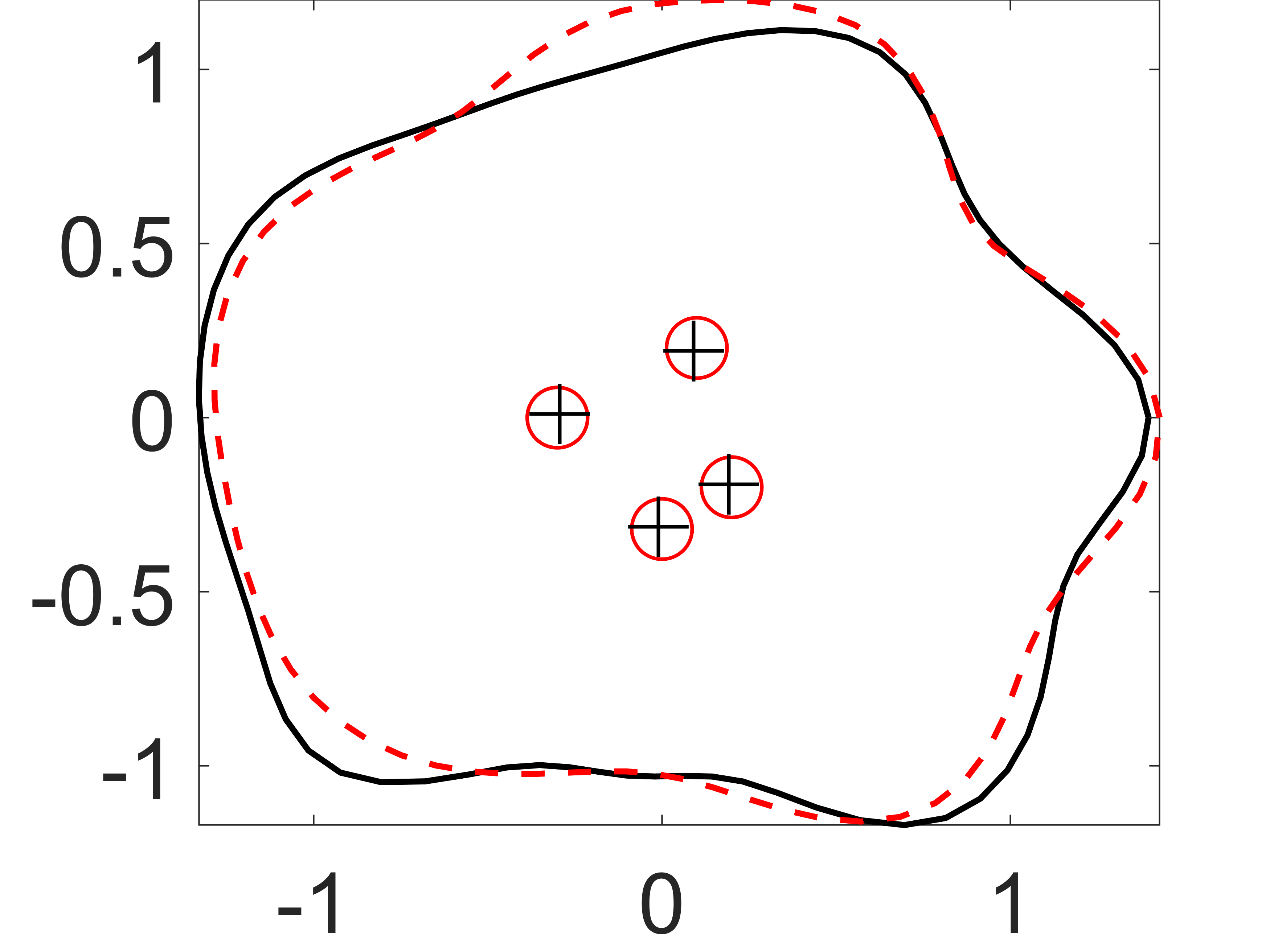}}
	    \subfigure[]{\includegraphics[width=0.3\textwidth]{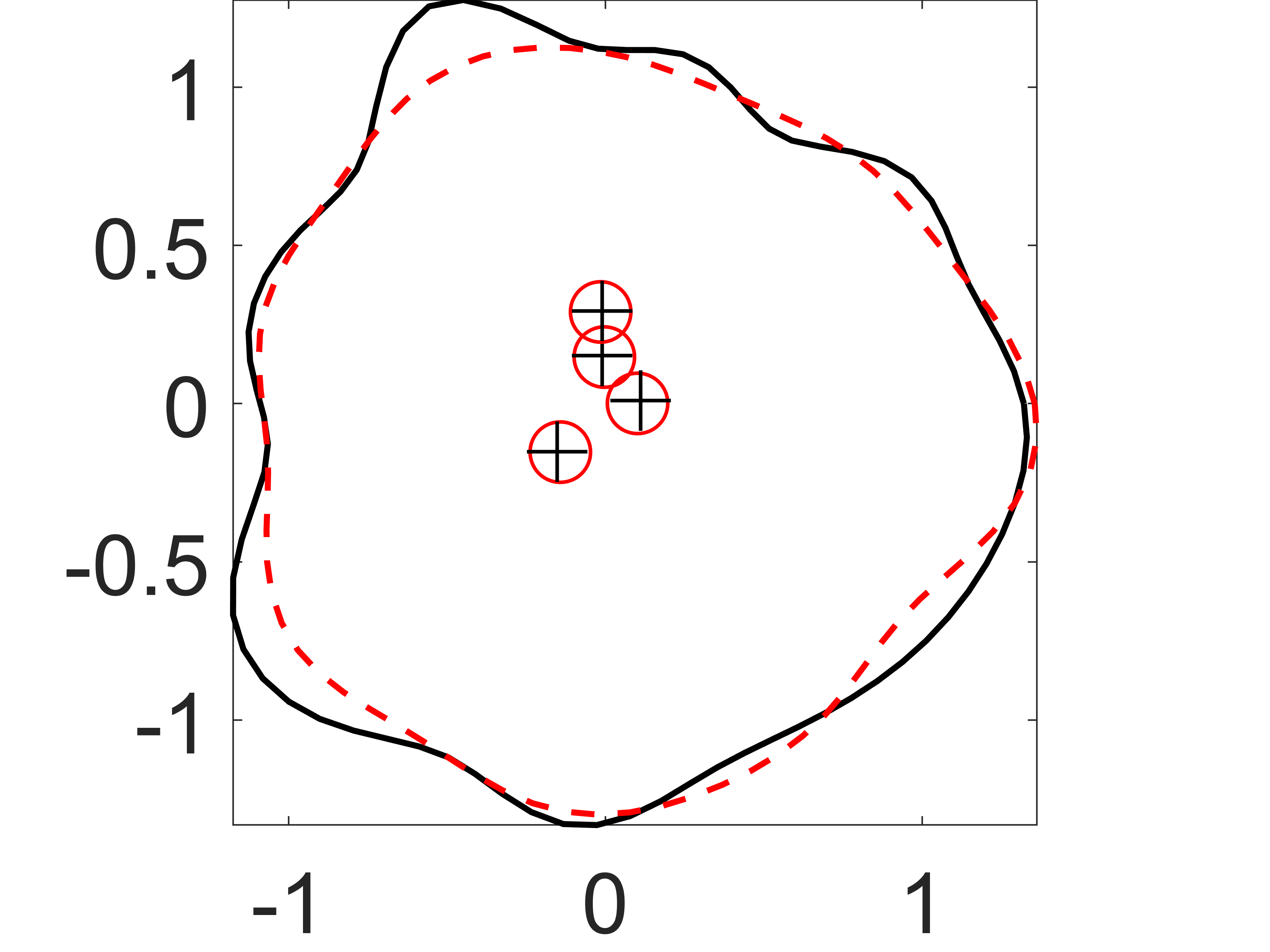}}
	    \subfigure[]{\includegraphics[width=0.3\textwidth]{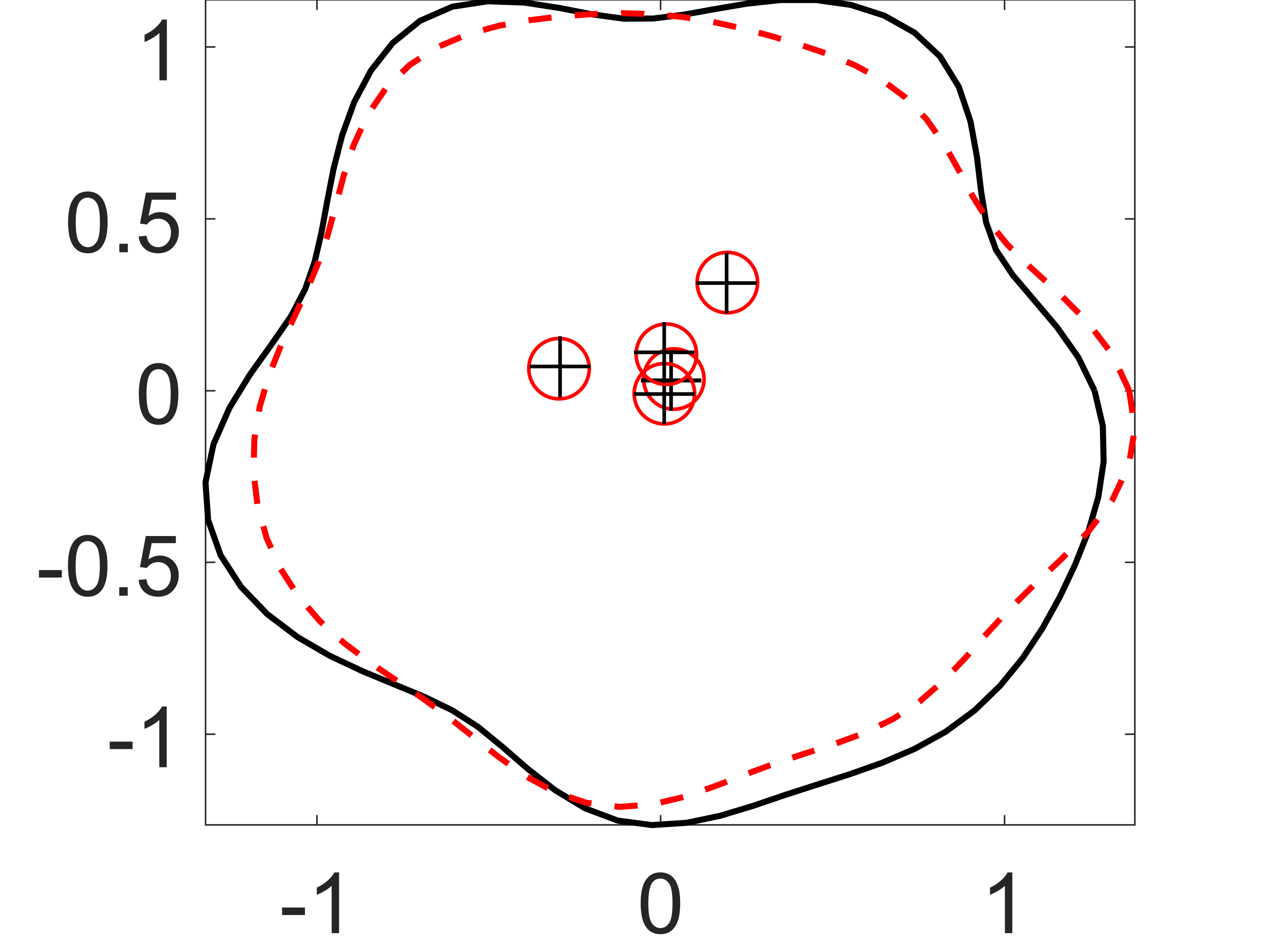}}
	    %\subfigure[]{\includegraphics[width=0.3\textwidth]{shape3_20_49}}
	    \caption{Reconstruction of the non-symmetric cavities together with 4 or 5 point sources. Row 1: configurations; Row 2: reconstructions.}
	\label{fig:random2}
    \end{figure}

    \begin{table}
    	\centering
    	\caption{The reconstruction errors for random cavities.}\label{tab:error_random}
    	\begin{tabular}{c|cccccc}
    		\toprule
    		Model &  \cref{fig:random}(a)   &  \cref{fig:random}(b)   &   \cref{fig:random}(c)   &  \cref{fig:random2}(a)   &  \cref{fig:random2}(b)   &   \cref{fig:random2}(c) \\ \midrule
    		Error & $5.84\%$ & $4.24\%$ & $5.76\%$ & $6.41\%$ & $5.70\%$ & $6.84\%$ \\ \bottomrule
    	\end{tabular}
    \end{table}

	\begin{table}
		\centering
		\caption{The source locations in \cref{fig:random2}(c).}\label{tab:location}
		\begin{tabular}{ccc}
			\toprule
			        &   Exact locations   & Reconstructed locations \\ \midrule
			Point 1 & $(-0.2959, 0.0640)$ &   $(-0.2929, 0.0707)$   \\
			Point 2 & $(0.0381, 0.0338)$  &   $(0.0303, 0.0303)$    \\
			Point 3 & $( 0.1937, 0.3146)$ &   $(0.1919, 0.3131)$    \\
			Point 4 & $(0.0106, -0.0089)$ &   $(0.0101, -0.0101)$   \\
			Point 5 & $(0.0162, 0.1060)$  &   $(0.0101, 0.1111)$    \\ \bottomrule
		\end{tabular}
	\end{table}

    \section{Conclusions}\label{sec:conclusion}

    We propose a numerical method to tackle the acoustical co-inversion problem of imaging a scattering cavity as well as its internal point sources. A key step to resolve the intractable reconstruction is the decoupling of source and scattering components in the intertwined co-inversion system. To this end, we deploy a new model configuration consisting of two measurement curves. The twinned curves play a significantly important role in compensating for the severe lack of information and therefore the resulting system of layer potentials is capable of untangling the interlocked unity of source and cavity. Then the decoupled subproblems can be individually solved by the modified optimization and sampling schemes. It is worthwhile to point out that the overall flow of our algorithm does not rely on any solution of the forward problem or alternating iteration between the source and cavity. These advantages greatly facilitate the implementation of the inversion process. Theoretical foundations such as uniqueness and stability issues of the method are established. Finally, the promising features of our approach are validated by extensive numerical examples.

    In our opinion, the proposed divide-and-conquer framework paves the way for other more complicated scenarios of co-inversion problems, for instance, co-inversion problems with phaseless data, electromagnetic/elastic models and the time-dependent co-inversion problems. We hope to report more novel findings in these intriguing directions in the future.

	%%%%%%%%%%%%%%%%%%%%%%%%%%%%%%%%%%%%%%%%%%%%
	\section*{Acknowledgments}
	%%%%%%%%%%%%%%%%%%%%%%%%%%%%%%%%%%%%%%%%%%%%
	
	D. Zhang and Y. Wang were supported by NSFC grant 12171200. Y. Guo and Y. Chang were supported by NSFC grant 11971133.
	%The authors would also like to thank the anonymous referees for their valuable comments and suggestions, which have led to great improvements on our manuscript.
	
	%%%%%%%%%%%%%%%%%%%%%%%%%%%%%%%%%%%%%%%%%%%%%%%%%%%%%%%%%%%%%%%%%%%%%%%%%%%%%%%%%%%%%%%%%%%

\end{document}